\numberwithin{equation}{section}
\newcommand\xrowht[2][0]{\addstackgap[.5\dimexpr#2\relax]{\vphantom{#1}}}
\title{A note on Fourier restriction and nested Polynomial Wolff axioms}
\theoremstyle{plain}
\newtheorem{theorem}{Theorem}[section]
\newtheorem{lemma}[theorem]{Lemma}
\newtheorem*{key estimate}{Key estimate}
\theoremstyle{definition}
\newtheorem{definition}[theorem]{Definition}
\newtheorem{remark}[theorem]{Remark}
\newtheorem*{acknowledgement}{Acknowledgement}
\newcommand{\ta}{\texttt{a}}
\newcommand{\bta}{\mbox{\small$\ta$}}
\newcommand{\sta}{\mbox{\scriptsize$\ta$}}
\newcommand{\tc}{\texttt{c}}
\newcommand{\btc}{\mbox{\small$\tc$}}
\newcommand{\stc}{\mbox{\scriptsize$\tc$}}
\def\inn#1#2{\langle#1,#2\rangle}
\newcommand{\eps}{\varepsilon}
\newcommand{\N}{\mathbb{N}}
\newcommand{\Z}{\mathbb{Z}}
\newcommand{\R}{\mathbb{R}}
\newcommand{\T}{\mathbb{T}}
\newcommand{\C}{\mathbb{C}}
\newcommand{\W}{\mathbb{W}}
\newcommand{\ud}{\mathrm{d}}
\newcommand{\codim}{\mathrm{codim}\,}
\newcommand{\dist}{\mathrm{dist}}
\newcommand{\supp}{\mathrm{supp}\,}
\renewcommand{\O}{\mathcal{O}}
\newcommand{\Sc}{\mathcal{S}}
\newcommand{\bZ}{\mathbf{Z}}
\newcommand{\cT}{\mathcal{T}}
\newcommand{\BL}[1]{\mathrm{BL}_{#1}}
\newcommand{\alg}{\mathrm{alg}}
\author[J. Hickman]{Jonathan Hickman}
\address{School of Mathematics, The University of Edinburgh, Edinburgh, EH9 3JZ, UK}
\email{jonathan.hickman@ed.ac.uk}
\author[J. Zahl]{Joshua Zahl}
\address{Department of Mathematics, University of British Columbia, Vancouver BC, V6T 1Z2, Canada}
\email{jzahl@math.ubc.ca}
\begin{document}

\begin{abstract} This note records an asymptotic improvement on the known $L^p$ range for the Fourier restriction conjecture in high dimensions. This is obtained by combining Guth's polynomial partitioning method with recent geometric results regarding intersections of tubes with nested families of varieties. 
\end{abstract}

\maketitle




\section{Introduction}




\subsection{Main result} For $n \geq 2$ let $B^{n-1}$ denote the unit ball in $\R^{n-1}$ and consider the \textit{Fourier extension} operator $E$ defined by
\begin{equation*}
E f(x) := \int_{B^{n-1}}
f(\omega)\,e^{i(x_1\omega_1 + \cdots + x_{n-1}\omega_{n-1} + x_n|\omega|^2)}\,  \ud\omega
\end{equation*}
for $f \in L^1(B^{n-1})$. The adjoint form of Stein's restriction conjecture \cite{Stein1979} asserts that the  estimate
\begin{equation}\label{extension}\tag{$\mathrm{R}_p^*$}
\|Ef\,\|_{L^p(\R^n)} \le C_{n,p}\|f\|_{L^p(\R^n)}
\end{equation}
holds for all $p>\frac{2n}{n-1}$. This conjecture is known to hold for $n=2$ due to Fefferman--Stein~\cite{Fefferman1970} but remains open in all other dimensions. The problem has a rich history and is related to a wide range of important questions in harmonic analysis, geometric measure theory, PDE and number theory: see, for instance, the surveys \cite{Wolff1999, Tao2004, Bennett2014} for further details.

The current best partial results on the restriction conjecture are based on the polynomial method \cite{Wang, Guth2018, HR2019}, which was introduced in this context in a seminal work of Guth~\cite{Guth2018}. The purpose of this note is to combine Guth's polynomial method approach with a recent Kakeya-type geometric result from \cite{HRZ,Zahl} in order improve the known bounds on the problem in the high-dimensional regime. 

The attendant numerology is somewhat complicated and consequently it is convenient to express the results in an asymptotic form. In particular, if the restriction conjecture were true, then  \eqref{extension} would hold for
\begin{equation*}
  p>2+2n^{-1}+O(n^{-2}).   
\end{equation*}
Thus, one may consider the $\lambda\ge 2$ for which \eqref{extension} can be confirmed in the range
\begin{equation}\label{asymptotic range}
 p>2+\lambda n^{-1}+O(n^{-2}).
\end{equation}

\begin{theorem}\label{asymptotic thm} \eqref{extension} holds in the range \eqref{asymptotic range} with  $\lambda=2.596...$.
\end{theorem}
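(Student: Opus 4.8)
The plan is to run Guth's polynomial partitioning induction on scales \cite{Guth2018,HR2019} for the extension operator $E$, tracking the exponent through the recursion and feeding in the improved Kakeya-type input at the ``transverse'' stage. Concretely, one fixes a large radius $R$, normalises $\|f\|_{L^\infty(B^{n-1})} \leq 1$, and partitions $\supp Ef \cap B_R$ using a polynomial $P$ of controlled degree into cellular pieces and a neighbourhood of the variety $Z(P)$. On the cellular pieces one gains by the standard pigeonholing/parabolic rescaling argument; on the wall one iterates, and after finitely many steps one is reduced to the situation where $Ef$ is essentially concentrated in the $R^{1/2+\delta}$-neighbourhood of a union of varieties $Z_1 \supseteq Z_2 \supseteq \cdots$ of decreasing dimensions $n-1 = d_0 > d_1 > \cdots > d_m$, i.e.\ the \emph{nested} tangential case. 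The key point is that the wave packets tangent to $Z_j$ interact with the next variety $Z_{j+1}$ in a way that is governed by a \emph{nested Polynomial Wolff axiom}: the recent geometric results of \cite{HRZ,Zahl} bound the overlap of the relevant tubes, and plugging these bounds into the Córdoba-type $L^2$ argument (combined with the broad/narrow decomposition à la Bourgain--Guth) yields a better exponent in the tangential contribution than was available in \cite{HR2019}.

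In more detail, the steps I would carry out are: (i) set up the bilinear-to-linear / broad-narrow reduction so that it suffices to estimate a ``$k$-broad'' part of $\|Ef\|_{L^p(B_R)}$; (ii) state the polynomial partitioning lemma and the bookkeeping of the algebraic parameters (degrees, dimensions $d_j$, the scales $\rho_j$ at which each nesting occurs); (iii) in the cellular-dominant case, apply induction on the radius $R$ with the gain coming from the reduced number of wave packets per cell; (iv) in the tangential-dominant (nested) case, invoke the nested polynomial Wolff axiom from \cite{HRZ,Zahl} to control $\|\sum_{T \,\mathrm{tang}\, Z_{\bullet}} E f_T\|_{L^2}$, and also run a lower-dimensional induction on the dimension $d_j$; (v) optimise the resulting multi-parameter recursion. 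The optimisation is what produces the specific constant $\lambda = 2.596\ldots$: one writes down the recursive inequality for the best admissible $\lambda$ as a function of the dimension counting and the Wolff-axiom exponent, takes the high-dimensional ($n \to \infty$) asymptotics so the $O(n^{-2})$ errors drop out, and solves the resulting fixed-point/optimisation problem numerically.

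The main obstacle I expect is step (iv) together with step (v): one must verify that the nested Wolff axiom of \cite{HRZ,Zahl} applies in exactly the geometric configuration produced by the partitioning algorithm — in particular that the tubes under consideration, after all the rescalings, really are ``$R^{1/2}$-tubes tangent to a nested sequence of varieties of controlled degree'' in the precise quantitative sense required — and then to carefully propagate the improved overlap bound through every level of the nested iteration without losing the gain to accumulated $R^\delta$ factors or to the combinatorics of how many nesting levels can occur. A secondary but genuine difficulty is the optimisation: the recursion couples the radius-induction exponent, the several dimension-induction exponents (one per $d_j$), and the Wolff exponent, so one has to argue that the worst case over all admissible dimension sequences $d_0 > d_1 > \cdots$ is attained at a configuration that can be analysed, and only then extract $\lambda = 2.596\ldots$ as the asymptotic value. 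The cellular case and the broad/narrow reduction, by contrast, should be essentially as in \cite{Guth2018,HR2019} and require only routine modification.
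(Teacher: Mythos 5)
Your outline follows the same route as the paper -- reduce to $k$-broad estimates, run the Guth/Hickman--Rogers partitioning iteration, exploit the nested tangential structure via the results of \cite{HRZ,Zahl}, then optimise -- but as it stands it is a roadmap that stops exactly where the new content begins, and the two obstacles you flag are genuine gaps rather than routine verifications. First, the geometric step: the output of the iteration is a function $f_{\vec{S}_{\ell}}$ tangent to the smallest grain, and nothing in the existing algorithms tells you how this function is built from \emph{scale-$R$} wave packets, which is what any Wolff-axiom input requires. The paper has to modify the algorithm itself (the extra Property IV of \texttt{[alg 1$^*$]} and the resulting property \eqref{P 4}) so that $\|f_{\vec{S}_{\ell}}\|_2$ is controlled by $\|f_{\vec{S}_{\ell}}^{\#}\|_2$, where $f_{\vec{S}_{\ell}}^{\#}$ is a sum of scale-$R$ wave packets satisfying a precise nested tube hypothesis; this rests on a generalised transverse equidistribution estimate (Lemma~\ref{trans eq lem}) and cross-scale wave packet comparisons, and it is where the gain is actually propagated without the losses you worry about. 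Moreover, Theorem~\ref{nested Wolff thm} is \emph{not} applied as stated: because the tangency neighbourhoods at level $j$ have width $r_j^{1/2+\delta_j}$ rather than $R^{1/2}$, one needs the line variant with flexible neighbourhood widths (Theorem~\ref{nested line Wolff thm}), which then yields the direction count of Lemma~\ref{mod Wolff lem} and the $L^2$ bound of Lemma~\ref{multi scale lem}. Your ``plug into a C\'ordoba-type $L^2$ argument'' does not identify this mechanism, and a direct invocation of the tube-version nested Wolff axiom would not match the configuration produced by the algorithm.

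Second, the numerology is not a numerical fixed-point computation but an explicit derivation, and without it the constant $2.596\ldots$ in the statement is not obtained. The paper chooses the weights $\gamma_j$ and exponents $p_i$ so that all $r_i$- and $D_i$-powers vanish, giving the closed form $p_n(k)=2+6\big(2(n-1)+(k-1)\prod_{i=k}^{n-1}\tfrac{2i}{2i+1}\big)^{-1}$ in Theorem~\ref{broad thm}; combining this with the Bourgain--Guth constraint \eqref{BG constraints}, setting $k=\nu n+O(1)$, and equating the two asymptotic linear coefficients $\tfrac{6}{2+\nu^{3/2}}$ and $\tfrac{4}{2-\nu}$ leads to the cubic $2x^3+3x^2-2=0$ for $x=\nu^{1/2}$, whose real root gives $\lambda=\tfrac{4}{2-\nu}=2.59607\ldots$ in closed form via Cardano. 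Your step (v) gestures at ``solving the resulting optimisation numerically'' but supplies neither the exponent bookkeeping that produces $p_n(k)$ nor the optimisation over $k$, so the specific value of $\lambda$ claimed in the theorem is not derived in your argument.
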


The precise form of $\lambda$, involving the irrational real root of a cubic equation, is described in the appendix. 

A comparison of the numerology of Theorem \ref{asymptotic thm} with previous partial results on the restriction conjecture is presented in Figure~\ref{asymptotic table}. 
It is remarked that in certain low dimensions the methods of this paper do not improve upon known results and, in particular, stronger estimates are known in the $n=3$ due to Wang \cite{Wang}. For the current best bounds in low dimensions see Figure~\ref{exponent table}.

\renewcommand{\arraystretch}{1.2}
 \begin{figure}
\begin{tabular}{ ||c|c|| } 
 \hline
  $\lambda=$ &  \\ 
   \hline 
 4 & Tomas \cite{Tomas1975} \\ 
   3 & Bourgain--Guth \cite{BG2011} \\
 8/3 & Guth \cite{Guth2018} \\ 
 2.604... & Hickman--Rogers \cite{HR2019} \\  
 2.596... & Theorem \ref{asymptotic thm} \\
 \hline
\end{tabular}
    \caption{Comparison with previous asymptotic results on the restriction conjecture. 
    }
    \label{asymptotic table}
\end{figure}




\subsection{Polynomial Wolff axioms} Underlying the restriction conjecture are deep geometric problems concerning the continuum incidence theory of long, thin tubes in $\R^n$. To be more precise, given a large parameter $R \geq 1$ define an \emph{$R \times R^{1/2}$-tube} to be a cylinder $T\subset \mathbb{R}^n$ of height $R$ and radius $R^{1/2}$ with arbitrary position and arbitrary orientation. The $R$ versus $R^{1/2}$ scaling arises naturally in the analysis of $Ef$ owing to the quadratic nature of the phase function. The \textit{direction} of an $R \times R^{1/2}$-tube $T$ is defined to be the direction of its coaxial line, which is denoted here by $\mathrm{dir}(T)\in S^{n-1}$. A family $\mathbf{T}$ of $R \times R^{1/2}$-tubes is \emph{direction-separated} if $\{\mathrm{dir}(T) : T \in \mathbf{T}\}$ forms an $R^{-1/2}$-separated subset of the unit sphere.

Effective analysis of $Ef$ relies on understanding of the incidence geometry of direction-separated tube families $\mathbf{T}$. This is formalised by the well-known and celebrated fact that the restriction conjecture implies the \textit{Kakeya conjecture}, where the latter may be loosely interpreted as a bound on the number of possible incidences between tubes in $\mathbf{T}$.

In relation to the incidence theory, a critical case occurs when the tubes from $\mathbf{T}$ tend to align around neighbourhoods of low dimensional, low degree algebraic varieties. The significance of this situation was first highlighted in the context of the restriction conjecture by Guth \cite{Guth2016}, and in the context of the Kakeya conjecture by Guth and the second author \cite{GK2015}. Thus, it is important to understand possible interactions between tubes and varieties, which are typically constrained by the dimension and degree of the variety. A fundamental tool in this direction is the following theorem which, in the language of \cite{GK2015}, states that direction-separated families of tubes satisfy the \textit{polynomial Wolff axioms} (see \cite{GK2015} for a discussion of this terminology).

\begin{theorem}[Polynomial Wolff axioms \cite{KR2018}] \label{poly Wolff axiom} For all~$n > j \ge 1$, $d\ge 1$ and $\varepsilon>0$, there is a constant $C_{n,d,\varepsilon}>0$ such that
\begin{equation*}\#\big\{ T \in \mathbf{T} :  |T \cap B_{r} \cap N_{R^{1/2}} \bZ|\geq r|T| \big\} \leq C_{n,d,\varepsilon} r^{-j}R^{(n + j -1)/2+\varepsilon}
\end{equation*}
whenever $1 \leq R^{1/2} \leq r \le R$, $\mathbf{T}$ is a direction-separated family of $R \times R^{1/2}$-tubes and $\bZ\subset \mathbb{R}^n$ is an algebraic variety of codimension $j$ and degree at most $d$.
\end{theorem}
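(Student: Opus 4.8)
The plan is to prove this following Katz and Rogers \cite{KR2018}. Since the tubes are direction-separated, each direction occurs at most once, so it is enough to bound the number of admissible \emph{directions}. Dilating the ball $B_{r}$ to the unit ball replaces $T\cap B_{r}$ by a tube of length $1$ and width $\delta := R^{1/2}/r\in[R^{-1/2},1]$, replaces $N_{R^{1/2}}\bZ$ by $N_\delta(r^{-1}\bZ)$ — a variety of the same degree and codimension — and leaves the directions $R^{-1/2}$-separated; this reduces the theorem to the following \emph{core estimate}: for every variety $\bZ\subset\R^n$ of degree $\le d$ and codimension $j$, the set $\Omega_\delta(\bZ)\subset S^{n-1}$ of directions $v$ for which some unit-length $\delta$-tube in direction $v$ spends a fixed proportion of its volume in $N_\delta\bZ$ can be covered by $C_{n,d,\varepsilon}\,\delta^{-(n-1-j)-\varepsilon}$ caps of radius $\delta$. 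Indeed, since $\delta\ge R^{-1/2}$ each such cap contains $O\big((\delta R^{1/2})^{n-1}\big)$ of the $R^{-1/2}$-separated directions, and $\delta^{-(n-1-j)-\varepsilon}(\delta R^{1/2})^{n-1}\lesssim r^{-j}R^{(n+j-1)/2+\varepsilon}$, as $\delta^{j}=r^{-j}R^{j/2}$.

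The core estimate rests on \emph{rigidity from bounded degree}. If a unit $\delta$-tube spends a fixed proportion of its volume in $N_\delta\bZ$, then over its length the variety $\bZ$ must shadow the core line of the tube to within $O(\delta)$; but a degree-$\le d$ algebraic set trapped near a line segment over unit length is itself $O_d(\delta)$-close to a line — a bounded-degree set cannot turn enough to follow the segment, by a Remez-type estimate together with B\'ezout's theorem. Hence the direction of such a tube lies within $O_d(\delta)$ of the directions of genuine lines (nearly) contained in $\bZ$; in the idealised limit $\delta\to0$ these form a semialgebraic set of complexity $O_{n,d}(1)$ and dimension at most $n-1-j$ — the degree bound forces the variety of lines in $\bZ$ to have the expected dimension — so the covering bound would follow from the standard $\varepsilon$-net estimate for bounded-complexity semialgebraic sets. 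The genuine difficulty is that at a fixed positive scale a tube can be trapped near a line that exists for $\bZ$ only in a limiting sense, so the covering bound must be proved quantitatively and uniformly over all $\bZ$ of degree $\le d$.

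This is carried out by induction on $m := n-j=\dim\bZ$ (the ambient dimension descending as well). The base case $m=1$ is immediate: a degree-$\le d$ curve splits into $O_d(1)$ almost-linear arcs, so $\Omega_\delta(\bZ)$ lies in an $O_d(\delta)$-neighbourhood of $O_d(1)$ directions. The inductive step slices $\bZ$ by a generic pencil of parallel hyperplanes: a tube whose direction makes a suitable small angle with the pencil meets each hyperplane in a plank which, after the corresponding dilation, becomes a unit tube of slightly larger width in one dimension lower, lying near a slice of $\bZ$ of dimension $m-1$ and degree $\le d$ (by B\'ezout, for a generic pencil, via Bertini/Sard), to which the inductive hypothesis applies; Wongkew's bound $|N_\delta\bZ\cap B(0,1)|\lesssim_d\delta^{j}$ serves to control the concentration of trapped tubes across the change of scale. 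The main obstacle, and the technical core of the proof, is the class of tubes whose direction is \emph{tangential} to the pencil and hence invisible to the slicing; these must be captured by varying the pencil and iterating, and the accompanying bookkeeping — a tower of scales $\delta,\delta^{1/2},\delta^{1/4},\dots$ spawned by iterated slicing, together with logarithmic and net losses, all to be absorbed into a single $\delta^{-\varepsilon}$ — is delicate and is where the degree $d$ and Wongkew's estimate are used uniformly. Summing the transverse and tangential contributions, running over the $O(\log R)$ dyadic parameter ranges, and undoing the rescalings then gives the theorem.
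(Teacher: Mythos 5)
You should first be aware that the paper does not prove Theorem~\ref{poly Wolff axiom} at all: it is quoted from Katz--Rogers \cite{KR2018} and used as a black box, so the only meaningful comparison is with the argument in \cite{KR2018} itself. Measured against that, your text is a proof plan rather than a proof, and the steps it leaves open are exactly the content of the theorem. The opening reduction (rescale $B_r$ to the unit ball, reduce to covering the trapped direction set $\Omega_\delta(\bZ)$, $\delta=R^{1/2}/r$, by $\lesssim_{d,\varepsilon}\delta^{-(n-1-j)-\varepsilon}$ caps, and check the numerology) is fine. But the two ingredients you then invoke are not established. The ``rigidity from bounded degree'' step --- that a trapped tube has direction within $O_d(\delta)$ of the direction of a line (nearly) contained in $\bZ$, and that these directions form a set of dimension at most $n-1-j$ --- is asserted via a Remez/B\'ezout heuristic; you yourself concede that at positive scale the relevant lines ``exist only in a limiting sense,'' and bounding the directions of $\delta$-approximate lines in $\bZ$, uniformly over all $\bZ$ of degree at most $d$, is precisely the theorem, so as written the argument is circular. (Incidentally, ``the degree bound forces the variety of lines in $\bZ$ to have the expected dimension'' is not a correct justification even of the idealised $\delta=0$ statement: Fano-type families can have excess dimension, and the bound $\dim\le n-1-j$ for the direction set requires a different, direct argument.) In the proposed induction on $\dim\bZ$ by slicing with pencils of hyperplanes, the tangential directions --- which you correctly identify as ``the main obstacle and the technical core'' --- are dispatched with ``must be captured by varying the pencil and iterating,'' with no actual argument and no accounting that shows the iterated losses can be absorbed into $\delta^{-\varepsilon}$. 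So there is a genuine gap: the quantitative, uniform covering estimate is nowhere derived.

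It is also worth recording that this is not the route taken in \cite{KR2018}. Their proof involves no induction on dimension and no hyperplane slicing. Instead, they observe that the set of parameters of trapped tubes is contained in a semialgebraic (definable) set of complexity $O_{n,d}(1)$, bound its measure using Wongkew's theorem \cite{Wongkew1993} together with a Fubini-type argument, and then convert the measure bound into a $\delta$-scale covering bound for the direction set by means of quantitative real algebraic geometry (cube-counting/covering estimates for bounded-complexity semialgebraic sets, in the spirit of Barone--Basu and Yomdin--Comte). The $\delta^{j}$ gain in your ``core estimate'' arises there from the volume bound for $N_\delta\bZ$, not from a rigidity statement about approximate lines. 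If you want to reconstruct the theorem, that definability-plus-covering architecture is the one to follow; alternatively, an honest version of your slicing scheme would require you to actually formulate and prove the tangential-case lemma, which is the part currently missing.
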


See also \cite{Guth2016, Zahl2018} for earlier partial results. Here $N_{r}E$ denotes the $r$-neighbourhood of $E$ for any $r > 0$ and $E \subseteq \R^n$ and $B_{r}$ is a choice of ball in $\R^n$ of radius $r$. The relevant algebraic definitions are recalled in \S\ref{alg def sec} below. 

Using the $n=3$ case of Theorem~\ref{poly Wolff axiom}, Guth~\cite{Guth2016} was able to improve the then best bound on the restriction conjecture in $\R^3$. This argument was extended to higher dimensions by Rogers and the first author \cite{HR2019}, combining the ideas from \cite{Guth2016} with those of Guth's study of the higher dimensional problem in \cite{Guth2018}. This led to the previous best known asymptotic for the restriction conjecture (see Figure~\ref{asymptotic table}). 

In \cite{HRZ} and \cite{Zahl} a non-trivial extension of Theorem~\ref{poly Wolff axiom} was concurrently and independently established which, rather than controlling interactions between direction-separated tubes lying close to a single variety, controls interactions between direction-separated tubes and \textit{nested families of varieties}.  

\begin{theorem}[Nested polynomial Wolff axioms \cite{HRZ, Zahl}]\label{nested Wolff thm}
For all $n > m \geq 1$, $d \geq 1$ and $\varepsilon>0$,  there is a constant $C_{n,d,\varepsilon}>0$ such that
\begin{equation*}
\#\bigcap_{j = 1}^{m} \Big\{ T \in \mathbf{T} :  |T \cap B_{r_j} \cap N_{R^{1/2}} \bZ_j|\ge r_j|T|\Big\} \leq C_{n,d,\varepsilon} \Big(\prod_{j = 1 }^{m} r_j^{-1}\Big)R^{(n + m - 1)/2+\varepsilon}
\end{equation*}
holds whenever:
\begin{itemize}
    \item $ 1 \leq R^{1/2} \leq r_j \leq R$ for $1 \leq j \leq m$ and $B_{r_{m}}\subseteq \ldots \subseteq B_{r_1}\subset \R^n$;
    \item $\mathbf{T}$ is a direction-separated family of $R \times R^{1/2}$-tubes;
    \item Each $\bZ_j\subset \mathbb{R}^n$ is an algebraic variety of codimension $j$ and degree at most~$d$.
\end{itemize}
\end{theorem}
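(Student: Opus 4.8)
The natural plan is induction on the number of nested scales $m$; the base case $m = 1$ is the codimension-one case of the single-variety polynomial Wolff axioms, Theorem~\ref{poly Wolff axiom}. It is convenient to run the induction for the slightly more general statement in which the codimensions $\codim \bZ_j$ are allowed to be any strictly increasing sequence (not necessarily $1, 2, \dots, m$), with the same right-hand side, since this is the form the inductive step produces. Before the induction I would make the usual reductions: by dyadic pigeonholing, at the cost of $(\log R)^{O_m(1)} \le C_\eps R^\eps$, assume each $r_j$ is a dyadic element of $[R^{1/2}, R]$ and refine $\mathbf T$ so that its cardinality is comparable to the displayed count; since $\mathbf T$ is direction-separated, this count is essentially a number of admissible $R^{-1/2}$-separated directions, so one may discretise, replacing each $T$ by its coaxial line or by an $R^{1/2}$-net of grains; finally, passing to an irreducible component meeting a positive proportion of the surviving tubes, one may assume each $\bZ_j$ is irreducible (of degree $\le d$). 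A clarifying observation, which also indicates why the theorem is true, is that the right-hand side, which equals $R^{(n-1)/2}\prod_{j=1}^m (R^{1/2}/r_j)$, is the \emph{worst} possible count: trapping of a tube in $N_{R^{1/2}}\bZ_j$ over length $\sim r_j$ forces $\mathrm{dir}(T)$ into an $R^{1/2}/r_j$-neighbourhood of the tangent directions of $\bZ_j$ (on the scale of $B_{r_j}$), and the intersection of these $m$ direction-neighbourhoods has the largest measure — namely $\prod_j (R^{1/2}/r_j)$ — exactly when the varieties are nested, $\bZ_m \subset \cdots \subset \bZ_1$, so that each successive variety constrains only one new direction; a transverse configuration does much better.

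For the inductive step I would peel off the \emph{outermost} constraint, that of the hypersurface $\bZ_1$ and the largest ball $B_{r_1}$; this is the right one to peel because a codimension-one variety carries only one normal direction, so it can be recovered at full strength from the single-variety argument with no loss. Discarding the $\bZ_1$-condition leaves a nested family of $m-1$ varieties $\bZ_2, \dots, \bZ_m$ of codimensions $2, \dots, m$ inside $B_{r_2} \supseteq \cdots \supseteq B_{r_m}$, and the inductive hypothesis bounds its count by $C_{n,d,\eps}\big(\prod_{j=2}^m r_j^{-1}\big) R^{(n+m-2)/2 + \eps}$. It then suffices to prove the incremental estimate that imposing the extra hypothesis ``$T$ is trapped in $N_{R^{1/2}}\bZ_1$ over length $\gtrsim r_1$ within $B_{r_1}$'' cuts any such count by a further factor $R^{1/2}/r_1$, since $\big(\prod_{j=2}^m r_j^{-1}\big) R^{(n+m-2)/2+\eps} \cdot R^{1/2}/r_1 = \big(\prod_{j=1}^m r_j^{-1}\big) R^{(n+m-1)/2+\eps}$. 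For a single hypersurface in isolation this incremental estimate is precisely the $j=1$ case of Theorem~\ref{poly Wolff axiom}, whose proof proceeds by the direction-localisation above together with an algebraic decomposition of $\bZ_1$ into pieces on which the tangent-direction set is regular and a standard bound (Wongkew) on $|N_{R^{1/2}}\bZ_1 \cap B_{r_1}|$. The plan is to run exactly this argument \emph{in the presence of} the constraints coming from $\bZ_2, \dots, \bZ_m$: perform the polynomial/algebraic decomposition of $B_{r_1}$ adapted to $\bZ_1$, arrange the covering $R^{1/2}$-balls to be compatible with the nested balls $B_{r_j}$, and apply the inductive hypothesis inside each resulting piece.

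The main obstacle is making that last step — the recovery of the factor $R^{1/2}/r_1$ — interact cleanly with the inner constraints while keeping every exponent sharp. Concretely, the decomposition of $\bZ_1$ must be compatible with the nested balls $B_{r_j}$ and must not inflate the degrees of $\bZ_2, \dots, \bZ_m$, so one needs Bézout's theorem and a careful, separate treatment of the degenerate cases in which an intersection fails to drop dimension (for instance $\bZ_{j+1}$ containing a component that one has produced from $\bZ_1$ and the preceding varieties). Pinning down the power of $R$ comes down to bounding the multiplicity with which a single tube meets the pieces of the decomposition — driven by the volume bound for $N_{R^{1/2}}\bZ_1$ — and an error of even $R^{1/2}$ here would destroy the sharp exponent $(n+m-1)/2$; this is the delicate heart of the matter. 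A secondary bookkeeping point is that the pigeonholing and refinements must be carried out uniformly across all $m$ scales at once, so that the $R^\eps$ losses do not compound into an unabsorbable $R^{m\eps}$.
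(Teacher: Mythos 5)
First, a point of orientation: the paper does not prove Theorem~\ref{nested Wolff thm} at all. It is imported verbatim from \cite{HRZ,Zahl}, and the variant actually used later (Theorem~\ref{nested line Wolff thm}) is quoted from \cite[Lemma 2.11]{Zahl} combined with Wongkew's theorem. So the only question is whether your outline would constitute a proof of the cited result, and it would not: the decisive step is missing. Your induction rests on the ``incremental estimate'' that imposing the $\bZ_1$-condition (trapping in $N_{R^{1/2}}\bZ_1$ over length $\gtrsim r_1$ inside $B_{r_1}$) cuts the count of the inner-constrained family by a further factor $R^{1/2}/r_1$. This is not a consequence of the $j=1$ case of Theorem~\ref{poly Wolff axiom}, and it cannot be proved by treating the inner-constrained family as a black box of a given cardinality: for example, if $\bZ_2\subset\bZ_1$ then every tube trapped in $N_{R^{1/2}}\bZ_2$ is automatically trapped in $N_{R^{1/2}}\bZ_1$, so the extra condition may exclude no tubes whatsoever. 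The factor $R^{1/2}/r_1$ must instead be wrung out of a joint analysis showing that, in such situations, the inner count is already far below the inductive bound --- i.e.\ the savings from the different scales must be shown to \emph{compound}. Establishing that compounding (by producing and propagating semialgebraic structure of the trapped direction sets across the nested scales, in the style of Katz--Rogers, or by Zahl's induction on lines in nested neighbourhoods) is precisely the content of \cite{HRZ} and \cite[Lemma 2.11]{Zahl}; your sketch defers it entirely, and indeed you label exactly this interaction ``the delicate heart of the matter'' without offering an argument for it. In effect the outline reduces the theorem to an assertion equivalent to the theorem.

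Two further cautions. The heuristic you lean on --- that trapping in $N_{R^{1/2}}\bZ_j$ over length $r_j$ forces $\mathrm{dir}(T)$ into an $(R^{1/2}/r_j)$-neighbourhood of ``the tangent directions of $\bZ_j$'' --- is not a usable proof device: the tangent directions vary over the variety and can occupy a large portion of $S^{n-1}$, and already the single-variety bound $r^{-j}R^{(n+j-1)/2}$ is a nontrivial measure estimate for a semialgebraic direction set rather than a pointwise direction-localisation plus a volume bound; so ``running the same argument in the presence of the inner constraints'' is not a routine adaptation. Also note that the theorem as stated requires only the balls to be nested, not the varieties (nesting of the $\bZ_j$ appears only in Theorem~\ref{nested line Wolff thm}), so an argument exploiting nestedness of the varieties would prove a different statement. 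Your worry about compounding $R^{\eps}$ losses, on the other hand, is harmless, since the induction has at most $n$ steps.
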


In \cite{HRZ, Zahl} Theorem~\ref{nested Wolff thm} was applied to give new bounds on the Kakeya conjecture in the high dimensional regime. In this note the same ideas are transferred into the context of the restriction problem. In particular, by adapting the arguments used to study the restriction problem from \cite{Guth2016, Guth2018, HR2019}, one is led to consider tube interactions with nested families of varieties. There are some differences between the geometric setup in the Kakeya and restriction problems, however, and consequently Theorem~\ref{nested Wolff thm} is not used in the forthcoming analysis \textit{per se}, but rather a variant which is better adapted to the restriction problem. This variant is stated in terms of lines rather than tubes and follows directly from \cite[Lemma 2.11]{Zahl}: see Theorem~\ref{nested line Wolff thm} below. 

{
\renewcommand{\arraystretch}{1.2}
 \begin{figure}
\begin{tabular}{ ||c|c|c||c|c|c|| } 
 \hline
  $n=$ & $p >$  &  & $n=$ & $p > $  & \textbf{} \\ 
   \hline 
 \cellcolor{white} 2 & 4 & Fefferman--Stein \cite{Fefferman1970}  &  \cellcolor{yellow!50} 11 &  \cellcolor{yellow!50} $2+\frac{12597}{49670}$ &  \cellcolor{yellow!50} Theorem \ref{broad thm}\\ 
  \cellcolor{white} 3 & $3+\frac{3}{13}$ & Wang \cite{Wang}  & \cellcolor{white} 12 & $2+\frac{4}{17}$ &  Guth \cite{Guth2018}  \\
  4 &  $2+\frac{1407}{1759}$ & Hickman--Rogers \cite{HR2019}\footnotemark[1] &  \cellcolor{yellow!50}  13 &  \cellcolor{yellow!50} $2+\frac{185725}{878068}$ &  \cellcolor{yellow!50} Theorem \ref{broad thm}  \\  
  \cellcolor{yellow!50} 5 &  \cellcolor{yellow!50} $2+\frac{63}{100}$ &  \cellcolor{yellow!50} Theorem \ref{broad thm}  & \cellcolor{yellow!50} 14 & \cellcolor{yellow!50}  $2+\frac{1671525}{8414731}$ &  \cellcolor{yellow!50} Theorem \ref{broad thm}  \\ 
\cellcolor{white} 6 & $2+\frac{1}{2}$ & Guth \cite{Guth2018}  &  \cellcolor{yellow!50} 15 &  \cellcolor{yellow!50} $2+\frac{2}{11}$ &  \cellcolor{yellow!50} Theorem \ref{broad thm}  \\ 
  \cellcolor{yellow!50} 7 &  \cellcolor{yellow!50} $2+\frac{429}{1018}$ &  \cellcolor{yellow!50} Theorem \ref{broad thm}  &  \cellcolor{yellow!50} 16 &  \cellcolor{yellow!50} $2+\frac{20036013}{116580449}$ &  \cellcolor{yellow!50} Theorem \ref{broad thm}   \\ 
 \cellcolor{white} 8 & $2+\frac{4}{11}$ &  Guth \cite{Guth2018}  & \cellcolor{yellow!50} 17 & \cellcolor{yellow!50} $2+\frac{4}{25}$& \cellcolor{yellow!50} Theorem \ref{broad thm}  \\ 
  \cellcolor{yellow!50} 9 &  \cellcolor{yellow!50} $2+\frac{7293}{23032}$ &  \cellcolor{yellow!50} Theorem \ref{broad thm}  &  \cellcolor{yellow!50} 18 &  \cellcolor{yellow!50} $2+\frac{123751845}{817128103}$ &  \cellcolor{yellow!50} Theorem \ref{broad thm}   \\ 
 \cellcolor{white} 10 & $2+\frac{2}{7}$ &  Guth \cite{Guth2018}  &   \cellcolor{yellow!50}  19 &  \cellcolor{yellow!50} $2+\frac{1}{7}$ &  \cellcolor{yellow!50} Theorem \ref{broad thm} \\
 \hline
\end{tabular}
    \caption{The current state-of-the-art for the restriction problem in low dimensions. New results are \colorbox{yellow!50}{highlighted} and are deduced by combining Theorem \ref{broad thm} with the linear to $k$-broad reduction from \cite{BG2011, Guth2018}.$^2$
    }
    \label{exponent table}
\end{figure}
\footnotetext[1]{See also \cite{Demeter}.}
\footnotetext[2]{These computations were carried out using the following Maple \cite{Maple} code:
 \begin{verbatim}
n := [insert dimension];
p_broad := 2+6/(2*(n-1)+(k-1)*4^(n-k)*(factorial(n-1)/factorial(k-1))^2
*factorial(2*k-1)/factorial(2*n-1)):   p_limit :=2+ 4/(2*n-k):
p_seq := [seq(max(eval(p_broad, k = i), eval(p_limit, k = i)), i = 2 .. n)]: 
new_exponent := min(p_seq);
\end{verbatim}}
\setcounter{footnote}{2}}




\subsection{$k$-broad estimates}

Rather than attempt to prove \eqref{extension} directly, a number of standard reductions are applied to reduce matters to a simpler class of estimates. 

By a now standard \emph{$\varepsilon$-removal argument} (see \cite{Tao1999}) and factorisation theory (see  \cite{Bourgain1991} or ~\cite[Lemma 1]{Carbery1992}), the inequality \eqref{extension} holds for all $p$ in an open range if and only if for all $\varepsilon>0$ and all $R \geq 1$ the local estimates
\begin{equation}\label{local extension}\tag{$\mathrm{R}^*_{p,\mathrm{loc}}$}
\|Eg\|_{L^p(B_{R})}\leq C_{n,p,\varepsilon} R^{\varepsilon}\|g\|_{L^\infty(\R^{n-1})}
\end{equation}
hold in the same range. Here $B_{R}$ denotes an arbitrary ball of radius $R$ in $\R^n$.

Using the Bourgain--Guth method \cite{BG2011, Guth2018}, one may further reduce the problem to working with weaker \textit{$k$-broad estimates}, which take the form
\begin{equation}\label{broad}\tag{$\BL{k}^p$}    
\|Eg\|_{\BL{k}^{p}(B_{R})} \leq C_{n,p,\varepsilon} R^{\varepsilon} \|g\|_{L^\infty(\R^{n-1})}.
\end{equation}
The reader is referred to \cite{Guth2018} for the definition and basic properties of the \textit{$k$-broad norm} appearing on the left-hand side of this inequality.

The main result of this article is the following theorem.

\begin{theorem}\label{broad thm} Let $2 \leq k \leq n-1$ and  
\begin{align}\label{with}
 p\ge p_n(k):= 2+\frac{6}{2(n-1)+(k-1)\prod_{i=k}^{n-1}\frac{2i}{2i+1}}.
\end{align}
Then \eqref{broad} holds for all $\varepsilon>0$ and $R \gg 1$. 
\end{theorem}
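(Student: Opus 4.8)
The plan is to run Guth's polynomial partitioning argument for $k$-broad norms, following \cite{Guth2018} and its refinement by Rogers and the first author \cite{HR2019}, with the single modification that the single-variety estimate of Theorem~\ref{poly Wolff axiom} used in the algebraic case is replaced by an iterated version built on the nested polynomial Wolff axioms — concretely, on the line variant Theorem~\ref{nested line Wolff thm} coming from \cite[\S 2]{Zahl}. First I would carry out the standard reductions: after the $\varepsilon$-removal and factorisation steps recalled above, and after parabolic rescaling, it suffices to prove \eqref{broad} at the critical exponent $p = p_n(k)$ under the normalisation $\|g\|_{L^\infty(\R^{n-1})} \le 1$, $\supp g \subseteq B^{n-1}$; by subadditivity and $p$-monotonicity of the $k$-broad norm this comes down to a single bound for $\|Eg\|_{\BL{k}^{p_n(k)}(B_R)}$. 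One then fixes a large degree $D = D(\varepsilon)$ and a small $\delta$ with $\log D \ll \delta^{-1} \ll \log R$, and argues by induction on the dyadic scale $R$ and, inside that, on the dimension $n$, with base cases handled by classical bilinear/$L^2$ estimates together with the definition of the $k$-broad norm.

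The engine is one application of polynomial partitioning to $Eg$ on $B_R$ (as in \cite[\S4]{Guth2018}), yielding the familiar trichotomy. In the \emph{cellular} case $Z(P)$ cuts $B_R$ into $\sim D^n$ cells, each meeting only a $\lesssim D^{1-n}$ fraction of the relevant wave packets; summing the cellular contributions against the inductive hypothesis closes the estimate for every $p \ge \tfrac{2n}{n-1}$, and since $p_n(k) > \tfrac{2n}{n-1}$ this imposes no constraint here. In the \emph{transverse} case the mass concentrates on the wall $W = N_{R^{1/2}}Z(P)$; covering $W$ by balls of an intermediate radius, discarding in each ball the packets tangent to $Z(P)$, and recursing to the smaller scale, the transversal packets are controlled by the transversal interaction estimate of \cite{Guth2018}, again with no loss at $p_n(k)$. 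The entire improvement is therefore carried by the \emph{algebraic/tangential} case, in which every surviving wave packet lies in $W$ and is tangent to $Z(P)$ there.

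In the tangential case I would not stop after one step but iterate the partitioning inside the $R^{1/2}$-neighbourhood of the current variety. At stage $j$ one has produced a nested chain of varieties $\bZ_1, \ldots, \bZ_j$ with $\codim \bZ_i = i$ and degree $O_D(1)$, a corresponding nested chain of balls, and a subfamily of wave packets simultaneously tangent to all the $\bZ_i$; passing to their coaxial lines and invoking Theorem~\ref{nested line Wolff thm} then bounds, up to admissible factors, the number of such lines meeting any given small ball. Inserting this count into an $L^2$-based estimate on $N_{R^{1/2}}\bZ_j$ — whose dimension is $n-j$, so that it is governed by the lower-dimensional extension estimate supplied by the dimensional induction — and interpolating against the trivial $L^\infty$ bound gives the tangential bound at stage $j$. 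One iterates until the variety dimension drops to $k$, i.e. $j = n-k$, which is exactly where $k$-broadness (clustering into $(k-1)$-planes being harmless) permits stopping; the chain then has length $n-k$, matching the $n-k$ factors of the product in \eqref{with}.

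The main obstacle is the numerology: one must verify that the gains harvested over the $n-k$ iterated tangential steps — the round operating in ambient dimension $i+1$, for $i$ running from $n-1$ down to $k$, contributing a factor governed by the relative efficiency of the $(i+1)$- versus $i$-dimensional estimate once the nested tube count has been fed in, which is precisely the origin of the telescoping product $\prod_{i=k}^{n-1}\tfrac{2i}{2i+1}$ — combine with the (harmless) cellular and transverse cases so that the recursion for the admissible exponent closes at $p = p_n(k)$ and no larger. Solving this recursion and confirming the closed form \eqref{with}, together with tracking the $R^{O(\delta)}$ and $R^{\varepsilon}$ losses through the $\sim \log R / \log D$ rounds of the iteration, is the delicate but essentially bookkeeping core of the argument. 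A secondary technical point is checking at each stage that the new variety genuinely has the claimed codimension and bounded degree — handled by the dimension-reduction facts recalled in \S\ref{alg def sec} — and that cross-terms between packets tangent to different $\bZ_i$ are negligible.
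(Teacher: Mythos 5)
Your overall strategy coincides with the paper's: run the Guth/Hickman--Rogers polynomial partitioning iteration, record at each algebraic step the variety and the ball so as to build a nested chain of grains, feed the resulting family of scale-$R$ directions into the line form of the nested polynomial Wolff axioms (Theorem~\ref{nested line Wolff thm}), and then redo the numerology of \cite{HR2019}, which is indeed what produces the factor $\prod_{i=k}^{n-1}\tfrac{2i}{2i+1}$; the cellular/transverse discussion and the stopping at codimension $n-k$ are also as in the paper.

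The gap is at the step where you ``insert the count into an $L^2$-based estimate.'' The nested Wolff input bounds the number of scale-$R$ directions $\Theta[\vec{S}_\ell]$ of wave packets satisfying the nested tube hypothesis of Definition~\ref{nested tube condition}, and this converts into a bound only for the auxiliary function $f^{\#}_{\vec{S}_\ell}$, a restricted sum of scale-$R$ wave packets of the original $f$ (Lemmas~\ref{mod Wolff lem} and~\ref{multi scale lem}). But the functions $f_{\vec{S}_\ell}$ actually produced by the partitioning recursion are \emph{not} restricted sums of scale-$R$ wave packets: they are built through many intermediate re-decompositions at scales between $R$ and $r_\ell$, with wave packets discarded at every step, and the naive identification \eqref{heuristic for fVecS} on which your description implicitly relies (``a subfamily of wave packets simultaneously tangent to all the $\bZ_i$, pass to their coaxial lines'') is false in a literal sense. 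One therefore needs a substitute inequality relating $\|f_{\vec{S}_\ell}\|_2$ to $\|f^{\#}_{\vec{S}_\ell}\|_2$, namely Property \eqref{P 4}, and establishing it is the genuinely new analytic content of the proof: it requires a transverse equidistribution estimate valid for arbitrary restricted sums of small-scale wave packets (Lemma~\ref{trans eq lem}, generalising \cite[Lemma 7.6]{Guth2018}), a lemma controlling repeated refinements of wave packet decompositions (Lemma~\ref{repeat refine lem}), and a modification of the algorithm of \cite{HR2019} and of its tangential stopping condition so that this inequality propagates through every stage (\S\ref{relating scales sec}). Your proposal treats all of this as bookkeeping absorbed into the numerology (which, by contrast, is essentially the same algebra as in \cite{HR2019}); without this cross-scale transfer mechanism the nested direction count cannot be brought to bear on the functions appearing in the recursion, and the argument does not close.
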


Theorem~\ref{asymptotic thm} follows from Theorem~\ref{broad thm} in view of the aforementioned Bourgain--Guth method \cite{BG2011, Guth2018}. In particular, it follows from \cite{BG2011, Guth2018} that \eqref{broad} implies \eqref{local extension} whenever $n\ge 3$ and 
\begin{equation}\label{BG constraints}
2+\frac{4}{2n-k} \le p\le 2+\frac{2}{k-2}.
\end{equation}
Thus, to obtain the best possible estimate \eqref{extension} from Theorem~\ref{broad thm}, one wishes to choose a value of $k$ which optimises the range of $p$ in \eqref{with} subject to the constraints in \eqref{BG constraints}. For a given dimension it is a straightforward exercise to compute the optimal choice of $k$ and the resulting range of linear restriction estimates in low dimensions are tabulated in Figure~\ref{exponent table}. However, in general the optimisation procedure does not produce a compact formula for the explicit $p$ range, hence the convenience of the asymptotic formulation in Theorem~\ref{asymptotic range}. The derivation of the value $\lambda = 2.596...$ for the asymptotic is described in the appendix.

\subsection{Structure of the article} This article is \textbf{not} self-contained and refers heavily back to the work of Guth \cite{Guth2018}, and the reformulation of Guth's induction-on-scale argument as a recursive algorithm in \cite{HR2019}. In \S\ref{notation sec} certain notational conventions are set up; \S\ref{prelim sec} describes various preliminaries including the wave packet decomposition and basic algebraic definitions; 
\S\ref{overview sec} provides an overview of a modified form of the polynomial structural decomposition described in \cite{Guth2018, HR2019} and explains how this modification can be combined with Theorem \ref{nested Wolff thm} to give Theorem~\ref{broad thm}; \S\ref{L2 orthogonality sec} deals with basic orthogonality results used to establish the decomposition in \S\ref{overview sec} whilst the decomposition itself is proven in \S\ref{relating scales sec}, using arguments from \cite{Guth2018, HR2019}; appended is a discussion of the numerology of Theorem~\ref{asymptotic thm}. 

\begin{acknowledgement} The authors would like to thank Keith M. Rogers for numerous helpful discussions. 
\end{acknowledgement}




\section{Notational conventions}\label{notation sec}

In the arguments that follow, the parameters $n$, $k$, $\varepsilon$ are fixed and satisfy the hypotheses of Theorem~\ref{broad thm}. In particular, all implicit constants are allowed to depend on $n$, $k$, and $\eps$.

Our arguments will involve a number of additional admissible parameters
\begin{equation}\label{small parameters}
    \varepsilon^{C} \leq \delta \ll_{\varepsilon} \delta_0 \ll_{\varepsilon} \delta_1 \ll_{\varepsilon} \dots \ll_{\varepsilon} \delta_{n-k} \ll_{\varepsilon} \varepsilon_{\circ} \ll_{\varepsilon} \varepsilon.
\end{equation}
Here $C$ is some dimensional constant and the notation $A \ll_{\varepsilon} B$ for $A, B \geq 0$ indicates that $A \leq \mathbf{C}_{n,\varepsilon}^{-1} B$ for some fixed large admissible constant $\mathbf{C}_{n,\varepsilon} \geq 1$ chosen to satisfy the requirements of the following arguments. 

Given $A, B \geq 0$ and a (possibly empty) list of objects $L$, the notation $A \lesssim_L B$, $A = O_L(B)$ or $B \gtrsim_L A$ indicates that $A \leq C_{n, L} B$ for some constant $C_{n,L}  > 0$ depending only on $n$ and the objects in $L$, whilst $A \sim_L B$ denotes that $A \lesssim_L B$ and $B \lesssim_L A$. Given a large parameter $r \geq 1$, the notation $\mathrm{RapDec}(r)$ is used to denote a non-negative term which is rapidly decreasing in $r$ in the sense that
\begin{equation*}
    \mathrm{RapDec}(r) \lesssim_{\varepsilon,N} r^{-N} \qquad \textrm{for all $N \in \N$.}
\end{equation*}
Such terms frequently appear as `errors' in the arguments. 



\section{Preliminaries}\label{prelim sec}




\subsection{Wave packet decomposition}\label{wave packet sec} For $r \geq 1$ let $\Theta[r]$ denote the set of all balls $\theta$ of radius $r^{-1/2}$ in $\R^{n-1}$ with centres $\omega_{\theta}$ lying in $c_n r^{-1/2}\Z^{n-1} \cap B^{n-1}$ for $c_n := 2^{-1}(n-1)^{-1/2}$. Fix $\psi \in C^{\infty}_c(\R^{n-1})$ with $\supp \psi \subseteq [-c_n, c_n]^{n-1}$ satisfying 
\begin{equation*}
\sum_{k \in \Z^{n-1}} \psi(\,\cdot\, - c_nk) \equiv 1;
\end{equation*}
such a function may be constructed using the Poisson summation formula. In addition, let $\tilde{\psi} \in C^{\infty}_c(\R^{n-1})$ satisfy $\supp \tilde{\psi} \subseteq B^{n-1}$ and $\tilde{\psi}(\omega) = 1$ whenever $\omega \in \supp \psi$. For $\theta \in \Theta[r]$ with $\omega_{\theta} = c_n r^{-1/2}k_{\theta}$ define $\psi_{\theta}(\omega) := \psi(r^{1/2}\omega - c_n k_{\theta})$ and $\tilde{\psi}_{\theta}(\omega) := \tilde{\psi}(r^{1/2}\omega - c_n k_{\theta})$ so that both $\psi_{\theta}$ and $\tilde{\psi}_{\theta}$ are supported in $\theta$ and $\tilde{\psi}_{\theta}(\omega) = 1$ whenever $\omega \in \supp \psi_{\theta}$.

 Writing $\T[r] := \Theta[r] \times r^{1/2}\Z^{n-1}$, the inversion formula for Fourier series allows one to decompose
\begin{equation}\label{wave packet decomposition}
f = \sum_{(\theta,v) \in \T[r]} f_{\theta,v}
\end{equation}
where\footnote{Here $\hat{g}$ denotes the Fourier transform of $g \in L^1(\R^d)$; that is: $\hat{g}(\xi) := \int_{\R^d} e^{-i \inn{x}{\xi}} g(x)\,\ud x$.}
\begin{equation*}
f_{\theta,v}(\omega) := \Big(\frac{r^{1/2}}{2\pi}\Big)^{n-1}e^{i \langle v, \omega \rangle } (f\cdot \psi_{\theta})^\wedge(v)\tilde{\psi}_{\theta}(\omega).
\end{equation*}
The sum \eqref{wave packet decomposition} is referred to as the \emph{wave packet decomposition of~$f$ at scale $r$}. The pairs $(\theta,v) \in \T[r]$ and functions $f_{\theta,v}$ will both be referred to as \emph{(scale $r$) wave packets}. 

The key properties of this decomposition are as follows:




\begin{itemize}
    \item \textbf{Orthogonality between the wave packets.} Combining spatial orthogonality with the Plancherel identity for Fourier series,
\begin{equation*}
    \max_{\theta_* \in \Theta[\rho]}\Big\|\sum_{(\theta,v) \in \mathbb{W}} f_{\theta,v}\Big\|_{L^2(\theta_*)}^2 \sim \max_{\theta_* \in \Theta[\rho]} \sum_{(\theta,v) \in \mathbb{W}} \|f_{\theta,v}\|_{L^2(\theta_*)}^2
\end{equation*}
for any collection of wave packets $\mathbb{W} \subseteq \T[r]$ and $1 \leq \rho \leq r$.
\item \textbf{Spatial concentration.} Given $0 < \delta \ll 1$ as in \eqref{small parameters} and any wave packet $(\theta,v) \in \T[r]$, define the tube
\begin{equation*}
  T_{\theta,v} :=  \big\{ x \in B(0,r)\, :\, |x'+2x_n\omega_{\theta} + v| \leq r^{1/2+\delta}\big\}.
\end{equation*}
 By a simple stationary phase argument (see, for instance, \cite{Tao2003}) the function $Ef_{\theta,v}$ is concentrated on $T_{\theta,v}$ in the sense that
\begin{equation*}
    |Ef_{\theta,v}(x)\chi_{B(0,r) \setminus T_{\theta,v}}(x)| = \mathrm{RapDec}(r)\|f\|_{2} \qquad \textrm{for all $x \in \R^n$.}
\end{equation*}
\end{itemize}
Each $T_{\theta,v}$ is an $R \times R^{1/2}$-tube with coaxial line passing through the point $(-v,0)^{\top} \in \R^n$ in the direction $G(\omega_{\theta}) := (-2\omega_{\theta}, 1)^{\top}$.

\begin{definition} Given $\W \subseteq \T[r]$, a function $f \in L^1(B^{n-1})$ is said to be concentrated on wave packets from $\mathbb{W}$ if 
\begin{equation*}
    \big\|\sum_{(\theta,v)\notin \mathbb{W}} f_{\theta,v}\big\|_{\infty} = \mathrm{RapDec}(r)\|f\|_2.
\end{equation*} 
Here the $\mathrm{RapDec}(r)$ notation is as defined in \S\ref{notation sec}.
\end{definition}




\subsection{Algebraic/geometric definitions}\label{alg def sec} Given real polynomials $P_1, \dots, P_m \in \R[X_1, \dots, X_n]$ let
\begin{equation*}
    Z(P_1, \dots, P_m) := \big\{ z \in \R^n : P_j(z) = 0 \textrm{ for $1 \leq j \leq m$} \} 
\end{equation*}
denote their common zero set. For the purposes of this article, such a set $\bZ$ is referred to as a \textit{variety}. 


Suppose $\bZ := Z(P_1, \dots, P_m)$ is a variety satisfying the additional condition
\begin{equation*}
    \bigwedge_{j=1}^{m} \nabla P_j(z) \neq 0 \qquad \textrm{for all $z \in \bZ$.}
\end{equation*}
In this case, $\bZ$ is said to be a \textit{transverse complete intersection}. Such variety $\bZ$ is a smooth submanifold of $\R^n$ of codimension $m$ and, in particular, has a tangent plane $T_z\bZ$ at every point $z \in \bZ$. It is remarked that the entire Euclidean space $\R^n$ is trivially considered a transverse complete intersection. 

The proof of Theorem~\ref{broad thm} relies on analysing geometric interactions between the tubes $T_{\theta,v}$ arising from the wave packet decomposition and varieties $\bZ$. Given a wave packet $(\theta,v) \in \T[r]$ and $y \in \R^n$ let $T_{\theta,v}(y) := y + T_{\theta,v}$.

\begin{definition} Let $\bZ$ be a codimension $m$ transverse complete intersection and fix a ball $B(y,r) \subseteq \R^n$. The tube $T_{\theta,v}(y)$ associated to a wave packet $(\theta,v) \in \T[r]$ is said to be $r^{-1/2+\delta_m}$-\textit{tangent} to $\bZ$ in $B(y,r)$ if the following conditions hold:
\begin{enumerate}[i)]
    \item $T_{\theta,v}(y) \subseteq N_{r^{1/2 + \delta_m}}\bZ \cap B(y,r)$;
    \item For any $x \in T_{\theta,v}(y)$ and $z \in \bZ \cap B(y,r)$ with $|x - z| \lesssim r^{1/2+\delta_m}$ one has
\begin{equation}\label{tangent def 1}
   \angle\big(G(\omega_{\theta}), T_z\bZ\big) \lesssim r^{-1/2 + \delta_m}.
\end{equation}
\end{enumerate}
Here $G(\omega_{\theta})$ is the direction of the tube $T_{\theta,v}$, as defined at the end of \S\ref{wave packet sec}, and the left-hand side of \eqref{tangent def 1} denotes the (unsigned) angle between this vector and $T_z\bZ$. The $\delta_m$ exponent is as described in \S\ref{notation sec}.
\end{definition}

Such notions of tangency may be expressed more succinctly by introducing the concept of a `grain', similar to that used in \cite{GZ,Zahl}

\begin{definition} For the purposes of this article, a \textit{grain} is defined to be a pair $(S, B_r)$ where $S \subseteq \R^n$ is a transverse complete intersection and $B_r \subset \R^n$ is a ball of some radius $r > 0$. 
The \textit{(co)dimension} of a grain $(S, B_r)$ is the (co)dimension of the variety $S$, whilst its \textit{degree} is the degree of $S$ and its \textit{scale} is the value of the radial parameter $r$.
\end{definition}

\begin{definition}\label{tangent f def} Let $\big(S,B(y,r)\big)$ be a codimension $m$ grain. A function $f \in L^1(B^{n-1})$ is said to be \textit{tangent} to $\big(S,B(y,r)\big)$ if it is concentrated on scale $r$ wave packets belong to the collection
\begin{equation}\label{tangent f 1}
    \big\{ (\theta,v) \in \T[r] : \textrm{  $T_{\theta,v}(y)$ is $r^{-1/2+\delta_m}$-tangent to $S$ in $B(y,r)$} \big\}. 
\end{equation}
\end{definition}

The polynomial Wolff axioms may be used to study the geometry of tubes $T_{\theta,v}(y)$ satisfying the condition in \eqref{tangent f 1}; in particular, Theorem~\ref{poly Wolff axiom} was used in this way to prove restriction estimates in \cite{Guth2016, HR2019}. Here more complex nested geometric structures are considered, which are described by the following definition (see also \cite{HRZ,Zahl}). 

\begin{definition}[Multigrain] A \textit{multigrain} is an $(m+1)$-tuple of grains 
\begin{equation*}
    \vec{S}_{m} = (\mathcal{G}_0, \dots, \mathcal{G}_{m}), \qquad \mathcal{G}_i = (S_i,B_{r_i}) \textrm{ for $0 \leq i \leq m\leq n$}
\end{equation*}
satisfying
\begin{itemize}
    \item $\codim S_i = i$ for $0 \leq i \leq m$,
    \item $S_m\subset S_{m-1}\subset\cdots\subset S_0$,
    \item $B_{r_{m}} \subseteq B_{r_{m-1}} \subseteq \cdots \subseteq B_{r_0}$.
    \end{itemize}
    The parameter $m$ is referred to as the \textit{level} of the multigrain. The \textit{complexity} of the multigrain is defined to be the maximum of the degrees $\deg S_i$ over all $0 \leq i \leq m$. Finally, the \textit{multiscale} of $\vec{S}_{m}$ is the tuple $\vec{r} = (r_0, r_1, \dots, r_{m})$.
\end{definition}

Given multigrains $\vec{S}_{\ell}$ and $\vec{S}_m$ of levels $\ell$ and $m$, respectively, write $\vec{S}_m \preceq \vec{S}_{\ell}$ if $\ell \leq m$ and the grains forming the first $\ell+1$ components of $\vec{S}_m$ agree those of $\vec{S}_{\ell}$.



\subsection{Nested tubes and the nested polynomial Wolff axioms} The proof of Theorem~\ref{broad thm} relies on an incidence estimate for families of tubes which have a certain multi-scale structure.  

\begin{definition}\label{nested tube condition} Let $\vec{S}_{m} = (\mathcal{G}_0, \dots, \mathcal{G}_{m})$ be a multigrain with 
\begin{equation*}
    \mathcal{G}_i = \big(S_i, B(y_i,r_i)\big) \quad \textrm{for $0 \leq i \leq m$.}
\end{equation*}
Define $\T[\vec{S}_{m}]$ to be the set of scale $R := r_0$ wave packets $(\theta_0,v_0) \in \T[R]$ that satisfy:\medskip

\noindent \textbf{Nested tube hypothesis.} There exists $(\theta_i,v_i) \in \T[r_i]$ for $1 \leq i \leq m$ such that
    \begin{enumerate}[i)]
    \item\label{nestedTubeItem1} $\dist(\theta_i,\theta_j) \lesssim r_j^{-1/2}$,
    \item\label{nestedTubeItem2} $\mathrm{dist}\big(T_{\theta_j,v_j}(y_j),\, T_{\theta_i,v_i}(y_i) \cap B(y_j,r_j) \big) \lesssim r_i^{1/2 + \delta}$,
    \item\label{nestedTubeItem3} $T_{\theta_j,v_j}(y_j) \subset N_{r_j^{1/2+\delta_j}}S_j$
\end{enumerate}
hold for all  $0\leq i \leq j \leq m$. In each case $\mathrm{dist}$ can be taken to be the Hausdorff distance.
\end{definition}

The direction set associated to  $\T[\vec{S}_{m}]$ is given by
\begin{equation*}
    \Theta[\vec{S}_{m}] := \big\{ \theta \in \Theta[R] : (\theta,v) \in \mathbb{T}[\vec{S}_{m}] \textrm{ for some $v \in R^{1/2}\Z^{n-1}$} \big\}.
\end{equation*}
Trivially, $\# \Theta[\vec{S}_{m}] \lesssim R^{(n-1)/2}$. However, the nested tube hypothesis further constrains the number of possible directions of the tubes in $\T[\vec{S}_{m}]$.

\begin{lemma}\label{mod Wolff lem}
Let $\vec{S}_{m}$ be a level $m$ multigrain with multiscale $\vec{r}_{m} = (r_0,\dots,r_{m})$ and complexity at most $d$. If $R := r_0$ and the constants in \eqref{small parameters} are chosen appropriately, then
\begin{equation*}
\# \Theta[\vec{S}_{m}]  \lesssim_{n,d}  \Big(\prod_{j = 1}^{m} r_j^{-1/2}\Big) R^{(n-1)/2+\varepsilon_{\circ}}.
\end{equation*}
\end{lemma}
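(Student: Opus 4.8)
\textbf{Proof strategy for Lemma~\ref{mod Wolff lem}.} The plan is to reduce the statement to the nested polynomial Wolff axioms (Theorem~\ref{nested Wolff thm}), or more precisely to its line-based variant \cite[Lemma 2.11]{Zahl} alluded to in the introduction. The starting point is the observation that the nested tube hypothesis in Definition~\ref{nested tube condition} is really a statement about the coaxial lines of the tubes $T_{\theta_0, v_0}(y_0)$: condition \eqref{nestedTubeItem1} forces the directions of the associated tubes at every scale to be comparable, condition \eqref{nestedTubeItem2} forces spatial proximity of the smaller tubes to the larger ones inside the nested balls $B(y_j, r_j)$, and condition \eqref{nestedTubeItem3} places each scale-$r_j$ tube in the $r_j^{1/2+\delta_j}$-neighbourhood of the codimension-$j$ variety $S_j$. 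So first I would reformulate: to each $(\theta_0, v_0) \in \T[\vec S_m]$ associate the coaxial line $\ell_{\theta_0, v_0}$ of $T_{\theta_0, v_0}(y_0)$, passing through $(-v_0, 0)^\top + y_0$ in direction $G(\omega_{\theta_0})$. Since $\mathbf{T}[\vec S_m] \subseteq \T[R]$ is automatically direction-separated at scale $R^{-1/2}$ and it suffices to bound the number of \emph{directions}, I can pass to a maximal direction-separated subfamily of lines at scale $R^{-1/2}$ and it is enough to count those.

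\textbf{Extracting the nested incidence structure.} The second step is to show that, after discarding a $\mathrm{RapDec}(R)$-negligible set of wave packets and adjusting the implicit constants in \eqref{small parameters}, membership in $\T[\vec S_m]$ implies for each line $\ell = \ell_{\theta_0, v_0}$ a chain of containments of the form
\begin{equation*}
 \ell \cap B(y_j, r_j) \subseteq N_{C r_j^{1/2 + \delta}}\big( T_{\theta_j, v_j}(y_j)\big) \subseteq N_{C' r_j^{1/2 + \delta_j}} S_j \qquad \text{for } 1 \le j \le m,
\end{equation*}
where the first inclusion uses \eqref{nestedTubeItem2} (with $i = 0$) together with the comparability of directions from \eqref{nestedTubeItem1}, and the second is \eqref{nestedTubeItem3}. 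Since $\delta \ll \delta_j$ and $r_j \le R$, the error $r_j^{1/2+\delta}$ is dominated by $r_j^{1/2+\delta_j}$, so after relabelling one obtains that a definite proportion of the length of $\ell \cap B(y_j, r_j)$ lies in $N_{r_j^{1/2}\cdot r_j^{\delta_j}} S_j$; this is precisely the hypothesis needed to feed into the line version of the nested Wolff axioms with $r_j$ playing the role of the radius parameter and the loss $r_j^{\delta_j} \le R^{\delta_j}$ absorbed into the $R^{\varepsilon_\circ}$ factor (here one uses $\delta_j \ll \varepsilon_\circ$, summing the $m \le n$ losses).

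\textbf{Applying the nested Wolff axioms and bookkeeping.} With the structure in place, Theorem~\ref{nested Wolff thm} (in its line formulation, Theorem~\ref{nested line Wolff thm}) applied to the direction-separated line family $\{\ell_{\theta_0, v_0}\}$, the nested balls $B(y_m, r_m) \subseteq \cdots \subseteq B(y_1, r_1) \subseteq B(y_0, R)$, and the varieties $\bZ_j := S_j$ of codimension $j$ and degree $\le d$, yields a bound of the shape $C_{n,d}\big(\prod_{j=1}^m r_j^{-1/2}\big) R^{(n + m - 1)/2 + \varepsilon_\circ/2}$ on the number of lines --- note the $r_j^{-1/2}$ rather than $r_j^{-1}$ because the line version counts incidences at the $R^{1/2}$-tube scale and the relevant density threshold here is $r_j^{1/2}$ out of $r_j$, i.e.\ the normalised parameter is $r_j^{-1/2}$. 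Finally, dividing by the trivial count $R^{(n-1)/2}$ of directions per spatial location --- or rather, observing that direction-separation already restricts us to at most $R^{(n-1)/2}$ directions and the Wolff bound controls the nested-constrained subset --- and comparing exponents gives $\#\Theta[\vec S_m] \lesssim_{n,d} \big(\prod_{j=1}^m r_j^{-1/2}\big) R^{(n-1)/2 + \varepsilon_\circ}$ after reconciling the $R^{(n+m-1)/2}$ versus $R^{(n-1)/2}$ discrepancy against the $\prod r_j^{-1/2} \ge R^{-m/2}$ gain, which is exactly where the extra $R^{m/2}$ is consumed.

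\textbf{Main obstacle.} The delicate point is the passage from the tube-nesting conditions \eqref{nestedTubeItem1}--\eqref{nestedTubeItem3}, which are relations \emph{between tubes at different scales}, to a clean statement about a \emph{single} line $\ell$ and its intersection with each ball $B(y_j, r_j)$ landing in a neighbourhood of $S_j$. One must check that the accumulated errors from the triangle inequality across the $m$ scales (each contributing $r_i^{1/2+\delta}$) and the direction discrepancies (each $r_j^{-1/2}$, which over a length-$r_j$ segment again contributes $r_j^{1/2}$) genuinely remain below the tangency scale $r_j^{1/2+\delta_j}$ that the Wolff axiom tolerates; this is why the rapidly-shrinking hierarchy $\delta \ll \delta_0 \ll \cdots \ll \delta_{n-k} \ll \varepsilon_\circ$ in \eqref{small parameters} is imposed, and verifying the inequalities hold with the stated choice of constants --- rather than merely `for $\delta$ small enough' --- is the technical heart of the argument. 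The counting/exponent bookkeeping in the last step is routine by comparison.
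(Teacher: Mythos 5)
Your overall strategy (pass from the tubes in $\T[\vec{S}_m]$ to a family of $R^{-1/2}$-separated lines, show each line meets every ball $B(y_j,r_j)$ inside a neighbourhood of $S_j$ of width $\approx r_j^{1/2+\delta_j}$, and then apply the line version of the nested Wolff axioms, Theorem~\ref{nested line Wolff thm}) is the same as the paper's, but the specific reduction you propose has a genuine gap: you take the \emph{coaxial line} of the scale-$R$ tube $T_{\theta_0,v_0}(y_0)$ and claim $\ell \cap B(y_j,r_j) \subseteq N_{Cr_j^{1/2+\delta}}\big(T_{\theta_j,v_j}(y_j)\big)$ "using \eqref{nestedTubeItem2} with $i=0$ and \eqref{nestedTubeItem1}". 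But \eqref{nestedTubeItem2} with $i=0$ only bounds the distance between $T_{\theta_j,v_j}(y_j)$ and $T_{\theta_0,v_0}(y_0)\cap B(y_j,r_j)$ by $r_0^{1/2+\delta} = R^{1/2+\delta}$, not by $r_j^{1/2+\delta}$; the coaxial line can sit on the far side of the scale-$R$ tube's cross-section from the smaller tube, so its offset from $T_{\theta_j,v_j}(y_j)$ (and hence from $N_{r_j^{1/2+\delta_j}}S_j$) can genuinely be of size $R^{1/2+\delta} \gg r_j^{1/2+\delta_j}$ when $r_j \ll R$. With that loss you could only take $\rho_j \approx R^{1/2+\delta}$ in Theorem~\ref{nested line Wolff thm}, which recovers the weaker numerology of Theorem~\ref{nested Wolff thm} ($\prod_j r_j^{-1}R^{(n+m-1)/2}$) rather than the claimed $\prod_j r_j^{-1/2}R^{(n-1)/2}$. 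The paper avoids this by \emph{recentring}: $L_{\theta_0,v_0}$ is taken parallel to $T_{\theta_0,v_0}$ but passing through the midpoint of the \emph{smallest} tube $T_{\theta_m,v_m}(y_m)$; then for each $j$ one applies \eqref{nestedTubeItem2} to the pair $(j,m)$ to place the anchor point within $\lesssim r_j^{1/2+\delta}$ of $T_{\theta_j,v_j}(y_j)$, and \eqref{nestedTubeItem1} contributes only an angular drift $\lesssim r_j^{-1/2}\cdot r_j = r_j^{1/2}$ across $B(y_j,r_j)$, which together with \eqref{nestedTubeItem3} and $\delta<\delta_j$ yields $\mathcal{H}^1\big(L\cap N_{Cr_j^{1/2+\delta_j}}S_j \cap B(y_j,r_j)\big)\geq r_j$ at every scale simultaneously.

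Your final bookkeeping paragraph is also off: Theorem~\ref{nested line Wolff thm} outputs $\big(\prod_j \rho_j/r_j\big)R^{(n-1)/2+\varepsilon}$, with exponent $(n-1)/2$, not $(n+m-1)/2$, so with $\rho_j\approx r_j^{1/2+\delta_j}$ the desired bound falls out directly as $\big(\prod_j r_j^{-1/2}\big)R^{(n-1)/2+\varepsilon_\circ/2+\delta_0+\cdots+\delta_m}$, concluded by demanding $\varepsilon_\circ > 2(\delta_0+\cdots+\delta_m)$. There is no trivial count to divide by and no extra $R^{m/2}$ to "consume"; indeed, since $\prod_j r_j^{-1/2}\geq R^{-m/2}$, the reconciliation you describe cannot bridge that factor, so as written the counting step would not deliver the lemma even granting the containments.
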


Lemma~\ref{mod Wolff lem} is a direct consequence of the following variant of Theorem~\ref{nested Wolff thm}, which is deduced by combining \cite[Lemma 2.11]{Zahl} with Wongkew's theorem \cite{Wongkew1993}. 

\begin{theorem}[Nested polynomial Wolff axioms \cite{HRZ, Zahl}]\label{nested line Wolff thm}
For all $n > m \geq 1$, $d \geq 1$ and $\varepsilon>0$,  the bound
\begin{equation*}
\#\bigcap_{j = 1}^{m} \Big\{ L \in \mathbf{L} :  \mathcal{H}^1\big(L \cap B_{r_j} \cap N_{\rho_j} \bZ_j\big)\ge r_j\Big\} \lesssim_{n,d,\varepsilon} \Big(\prod_{j = 1 }^{m} \frac{\rho_j}{r_j}\Big)R^{(n-1)/2+\varepsilon}
\end{equation*}
holds whenever:
\begin{itemize}
    \item $(\rho_j)_{j=1}^{m}$ and $(r_j)_{j=1}^{m}$ are non-increasing sequences lying in the interval $[1, R]$ and $R^{-1/2} \leq \rho_1/r_1$;    
     \item The balls $B_{r_j}$ are nested: $B_{r_{m}}\subseteq \ldots \subseteq B_{r_1}\subset \R^{n}$;
    \item $\mathbf{L}$ is a set of lines pointing in $R^{-1/2}$-separated directions;
    \item  Each $\bZ_j\subset \mathbb{R}^n$ is an algebraic variety of codimension $j$ and degree at most~$d$, and the varieties are nested: $\bZ_m\subset \bZ_{m-1}\subset\cdots\subset \bZ_1$.
\end{itemize}
\end{theorem}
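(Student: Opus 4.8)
The plan is to combine Wongkew's theorem \cite{Wongkew1993} with the purely geometric incidence estimate \cite[Lemma~2.11]{Zahl}: the former supplies the one quantitative algebraic fact needed about the sets $N_{\rho_j}\bZ_j$, and the latter does the rest. (Although Theorem~\ref{nested line Wolff thm} is a line-variant of the tube statement Theorem~\ref{nested Wolff thm}, the variable scales $\rho_j$ preclude a direct appeal to it, so one argues afresh.)

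\textbf{Step 1 (Wongkew's theorem).} For each $j$ the variety $\bZ_j$ has dimension $n-j$ and degree $\leq d$, so Wongkew's bound on tubular neighbourhoods of real algebraic varieties gives $|N_{\rho_j}\bZ_j \cap B_{r_j}| \lesssim_{n,d} \rho_j^{j} r_j^{n-j}$, using $\rho_j\leq r_j$. Taking a maximal $\rho_j$-separated subset of $N_{\rho_j}\bZ_j\cap B_{r_j}$ and applying the volume bound once more at scale $\sim\rho_j$ shows that $N_{\rho_j}\bZ_j\cap B_{r_j}$ is covered by a family $\mathcal{Q}_j$ of $\lesssim_{n,d}(r_j/\rho_j)^{n-j}$ balls of radius $\rho_j$. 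Using the nesting $\bZ_m\subset\cdots\subset\bZ_1$ and $B_{r_m}\subseteq\cdots\subseteq B_{r_1}$ together with the monotonicity of $(\rho_j)_j,(r_j)_j$, these coverings can be built compatibly, so that $\bigcup\mathcal{Q}_1\supseteq\bigcup\mathcal{Q}_2\supseteq\cdots$ and each ball of $\mathcal{Q}_j$ sits in a fixed dilate of a ball of $\mathcal{Q}_{j-1}$.

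\textbf{Step 2 (the incidence estimate).} For each $L\in\mathbf{L}$ the hypothesis $\mathcal{H}^1(L\cap B_{r_j}\cap N_{\rho_j}\bZ_j)\geq r_j$ now reads: $L$ traverses a length $\geq r_j$ of the union $\bigcup\mathcal{Q}_j$ of $\lesssim_{n,d}(r_j/\rho_j)^{n-j}$ radius-$\rho_j$ balls inside the comparably-sized ball $B_{r_j}$, and these data are nested in $j$. This is exactly the configuration quantified by \cite[Lemma~2.11]{Zahl}, which outputs
\[
  \#\mathbf{L}\ \lesssim_{n,d,\varepsilon}\ \Big(\prod_{j=1}^{m}\frac{\rho_j}{r_j}\Big)R^{(n-1)/2+\varepsilon}.
\]
The remaining hypotheses of that lemma hold by assumption: the directions of $\mathbf{L}$ are $R^{-1/2}$-separated, $\rho_j,r_j\in[1,R]$ keeps all scales usable, and $R^{-1/2}\leq\rho_1/r_1$ makes the bound non-vacuous at the coarsest level. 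This completes the deduction.

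\textbf{The main obstacle.} All the content is in Step 2, and it is not soft. A straight line staying within $\rho_j$ of the $(n-j)$-dimensional $\bZ_j$ over length $r_j$ pins its direction to within $O(\rho_j/r_j)$ of the tangent directions of $\bZ_j$; since $\bZ_j$ has \emph{codimension one inside $\bZ_{j-1}$}, the step from level $j-1$ to level $j$ refines the admissible direction set — already an $O(\prod_{i<j}\rho_i/r_i)$-fraction of $S^{n-1}$ — by one further factor $\sim\rho_j/r_j$, and over the $R^{-1/2}$-net these compound to $\prod_j(\rho_j/r_j)$. Making this rigorous is the hard part: within a single ball of $\mathcal{Q}_j$ the tangency constraint carries no information, so the argument must be run across the $\sim r_j/\rho_j$ consecutive balls the line meets, using the bounded degree of $\bZ_j$ to control the variation of the tangent directions; and one must rescale after each level to expose the next, keeping the nested tower of coverings intact so that the per-level gains genuinely multiply rather than merely being bounded by the largest one. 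This scale-by-scale bookkeeping is precisely what is abstracted into \cite[Lemma~2.11]{Zahl}, which we invoke directly.
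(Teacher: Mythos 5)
Your proposal takes essentially the same route as the paper, which gives no self-contained argument either: it simply asserts that Theorem~\ref{nested line Wolff thm} is ``deduced by combining \cite[Lemma 2.11]{Zahl} with Wongkew's theorem \cite{Wongkew1993}'', i.e.\ exactly your scheme of using Wongkew for the quantitative algebraic input and black-boxing the incidence content into Zahl's lemma. One minor caution: replacing $N_{\rho_j}\bZ_j\cap B_{r_j}$ by a union of $\gtrsim (r_j/\rho_j)^{n-j}$ balls may inflate the semialgebraic complexity on which \cite[Lemma 2.11]{Zahl} is allowed to depend, so it is cleaner to apply that lemma to the neighbourhoods themselves and use Wongkew only to bound their volumes.
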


The advantage of Theorem~\ref{nested line Wolff thm} compared with Theorem~\ref{nested Wolff thm} is that the former allows for additional flexibility in the choice of the widths  $\rho_j$ of the neighbourhoods of the $\bZ_j$.

\begin{proof}[Proof (of Lemma~\ref{mod Wolff lem})]
Let $\mathbb{T}\subset \mathbb{T}[\vec{S}_{m}]$ be a set of wave packets pointing in different directions with $\#\mathbb{T} = \# \Theta[\vec{S}_{m}]$. For each wave packet $(\theta_0,v_0)\in\mathbb{T}$, let $(\theta_i,v_i)\in \mathbb{T}[r_i]$ for $i =1,\ldots,m$ be the wave packets described in Definition \ref{nested tube condition}. Let $L_{\theta_0,v_0}$ be the line parallel to $T_{\theta_0,v_0}$ that passes through the midpoint of the wave packet $T_{\theta_m,v_m}(y_m)$. It suffices to bound the cardinality of the family of lines $\mathbf{L} := \{L_{\theta_0,v_0}\colon (\theta_0,v_0)\in\mathbb{T}\}$. By construction, (after pigeonholing) one may assume that the lines in $\mathbf{L}$ point in $R^{-1/2}$-separated directions. 

Let $L\in\mathbf{L}$ and let $(\theta_i,v_i)$, $i=1,\ldots,m$, be the corresponding wave packets. By item \ref{nestedTubeItem1}) and \ref{nestedTubeItem2}) from Definition \ref{nested tube condition}, for each index $j=0,\ldots,m$ it follows that
\begin{equation*}
L \cap B(y_j, r_j) \subset N_{Cr_j^{1/2+\delta}} T_{\theta_j,v_j}(y_j).
\end{equation*}
Since $\delta<\delta_j$, by item \ref{nestedTubeItem3}),
\begin{equation*}
\mathcal{H}^1\big(L \cap N_{Cr_j^{1/2+\delta_j}}S_j \cap B(y_j, r_j)\big) \geq r_j. 
\end{equation*}
Applying Theorem \ref{nested line Wolff thm}, one concludes that
\begin{equation*}
\begin{split}
\#\mathbf{L}&\lesssim_{n,d} \Big(\prod_{j = 1 }^{m} \frac{r_j^{1/2+\delta_j}}{r_j}\Big)R^{(n-1)/2+\varepsilon_{\circ}/2}\\
&\leq \Big(\prod_{j = 1}^{m} r_j^{-1/2}\Big) R^{(n-1)/2+\varepsilon_{\circ}/2 + \delta_0+\ldots+\delta_m}.
\end{split}
\end{equation*}
The result now follows, provided $\varepsilon_{\circ}>2(\delta_0+\ldots+\delta_m)$. 
\end{proof}




\section{An overview of the argument}\label{overview sec}



\subsection{Multiscale grains decomposition}\label{multigrain dec sec} The induction-on-scale argument from \cite{Guth2018} may be interpreted as a procedure for decomposing the broad norm $\|Ef\|_{\BL{k,A}^{p}(B_R)}$ into pieces with certain structural properties. Moreover, the relevant structure may be described in terms of multigrains $\vec{S}_{\ell}$ and the tube families $\T[\vec{S}_{\ell}]$, as introduced in \S\ref{prelim sec}. Here a succinct description of this decomposition is provided, based on the algorithms \texttt{[alg 1]} and \texttt{[alg 2]} from \cite{HR2019}.\medskip

Consider a family of Lebesgue exponents $p_i$ for $0 \leq i \leq n-k$ satisfying 
\begin{equation*}
p_{n-k} \geq p_{n-k+1} \geq \dots \geq p_0 =: p \geq 2.
\end{equation*}
and define $0 \leq \alpha_i, \beta_i \leq 1$ in terms of the $p_i$ by
\begin{equation*}
    \alpha_{i} := \Big(\frac{1}{2} - \frac{1}{p_i}\Big)\Big(\frac{1}{2} - \frac{1}{p_{i-1}}\Big)^{-1} \quad \textrm{and} \quad \beta_{i} := \Big(\frac{1}{2} - \frac{1}{p_i}\Big)\Big(\frac{1}{2} - \frac{1}{p_0}\Big)^{-1}
\end{equation*}
for $1 \leq i \leq n - k$ and $\alpha_0 :=: \beta_0 := 1$.\medskip

\paragraph{\underline{\texttt{Input}}}
Fix $R \gg 1$ and let  $f \colon B^{n-1} \to \C$ be smooth and bounded and, without loss of generality, assume that $f$ satisfies the \emph{non-degeneracy hypothesis} 
\begin{equation*}
\|Ef\|_{\BL{k,A}^{p}(B_{\>\!\!R})} \geq C_{\textrm{hyp}} R^{\varepsilon}\|f\|_{L^2(B^{n-1})}
\end{equation*}
where $C_{\textrm{hyp}}$ and $A \in \N$ are admissible constants which are chosen sufficiently large so as to satisfy the forthcoming requirements.\medskip

\paragraph{\underline{\texttt{Output}}} The algorithm outputs the following objects: 
\begin{itemize}
  \item $\mathcal{O}$ a finite collection of open subsets of $\R^n$ of diameter at most $R^{\varepsilon_{\circ}}$.
  \item A codimension $0 \leq m \leq n-k$ integer parameter $1 \leq A_{m+1} \leq A$. 
    \item An $(m+1)$-tuple of:
    \begin{itemize}
        \item Scales $\vec{r} = (r_0, \dots, r_m)$ satisfying $R = r_0 > r_1 > \dots > r_m$;
        \item  Large and (in general) non-admissible parameters $\vec{D} = (D_1, \dots, D_{m+1})$.
    \end{itemize}
    \item For $0 \leq \ell \leq m$ a family $\vec{\Sc}_{\ell}$ of level $\ell$ multigrains. Each $\vec{S}_{\ell} \in \vec{\Sc}_{\ell}$ has multiscale $\vec{r}_{\ell} = (r_0, \cdots, r_{\ell})$ and complexity $O_{\varepsilon}(1)$. The families have a nested structure in the sense that for each $1 \leq \ell \leq m$ and each $\vec{S}_{\ell} \in \vec{\Sc}_{\ell}$,  there exists some $\vec{S}_{\ell-1} \in \vec{\Sc}_{\ell-1}$ such that $\vec{S}_{\ell} \preceq \vec{S}_{\ell-1}$.
    \item For $0 \leq \ell \leq m$  an assignment of a function $f_{\vec{S}_{\ell}}$ to each $\vec{S}_{\ell} \in \vec{\Sc}_{\ell}$. Each $f_{\vec{S}_{\ell}}$ is tangent to $(S_{\ell}, B_{r_{\ell}})$, the final component of $\vec{S}_{\ell}$, in the sense of Definition~\ref{tangent f def}.
\end{itemize}

The above data is chosen so that the following properties hold:\\

\paragraph{\underline{Property i)}} The inequality
\begin{equation}\label{P 1}\tag{P-i}
\|Ef\|_{\BL{k,A}^p(B_{R})} \leq  M(\vec{r}, \vec{D}) \|f\|_{L^2(B^{n-1})}^{1-\beta_m} \Big( \sum_{O \in \O} \|Ef_{O}\|_{\BL{k,A_{m+1}}^{p_m}(O)}^{p_m}\Big)^{\frac{\beta_m}{p_m}}
\end{equation}
holds for 
\begin{equation*}
    M(\vec{r}, \vec{D}) := \Big(\prod_{i=1}^{m}D_i\Big)^{m\delta}\Big(\prod_{i=1}^{m} r_i^{(\beta_{i-1}-\beta_i)/2}D_i^{(\beta_{i-1} - \beta_{m})/2}\Big).
\end{equation*}

\paragraph{\underline{Property ii)}} 
\begin{equation}\label{P 2}\tag{P-ii}
 \sum_{O \in \O} \|f_{O}\|_2^{2} \lesssim_{\varepsilon} \Big(\prod_{i = 1}^{m + 1} D_{i}^{1 + \delta}\Big) R^{O(\varepsilon_{\circ})}\|f\|_{L^2(B^{n-1})}^{2}. 
\end{equation}

\paragraph{\underline{Property iii)}}   For $1 \leq \ell \leq m$,
\begin{equation}\label{P 3}\tag{P-iii}
 \max_{O \in \O}\|f_{O}\|_2^2 \lesssim_{\varepsilon} r_\ell^{-\ell/2}\prod_{i= \ell + 1}^{m+1}r_{i}^{-1/2} D_{i}^{-(n-i) + \delta} R^{O(\varepsilon_{\circ})}\max_{\vec{S}_{\ell} \in \vec{\Sc}_{\ell}} \|f_{\vec{S}_{\ell}}\|_2^2,
\end{equation}
where $r_{m+1} := 1$.\medskip

Properties i), ii) and iii) are stated explicitly in \cite{HR2019}: see Remark~\ref{ref HR rmk} below. The present argument requires one further property which does not appear in \cite{HR2019} but nevertheless follows as a consequence of the decomposition procedure described there. In order to state this property, for each multigrain $\vec{S}_{\ell} \in \vec{\mathcal{S}}_{\ell}$ let
\begin{equation*}
   f_{\vec{S}_{\ell}}^{\#} := \sum_{(\theta,v) \in \T[ \vec{S}_{\ell}]} f_{\theta,v}
\end{equation*}
where $\T[ \vec{S}_{\ell}]$ is the collection of scale $R$ wave packets introduced in Definition~\ref{nested tube condition}.\medskip 

\paragraph{\underline{Property iv)}}   For $1 \leq \ell \leq m$, 
\begin{equation}\label{P 4}\tag{P-iv}
    \|f_{\vec{S}_{\ell}}\|_{L^2(B^{n-1})}^2 \lesssim_{\varepsilon} r_{\ell}^{\ell/2}\Big(\prod_{i = 1}^{\ell}r_i^{-1/2}D_i^{\delta} \Big)  R^{O(\varepsilon_{\circ})}\|f_{\vec{S}_{\ell}}^{\#}\|_{2}^2
\end{equation}
holds for all $\vec{S}_{\ell} \in \vec{\mathcal{S}}_{\ell}$.\medskip

To verify \eqref{P 4} it is necessary to unpack some of the details of the polynomial partitioning algorithms described in \cite{Guth2018, HR2019}. This is postponed until \S\ref{relating scales sec} below. Presently the above properties are combined with Lemma~\ref{mod Wolff lem} to conclude the proof of the Theorem~\ref{broad thm}.

\begin{remark}\label{ref HR rmk} The decomposition corresponds to the final output of the algorithm \texttt{[alg 2]} from \cite{HR2019}. In particular:
\begin{itemize}
    \item \eqref{P 1} corresponds to \cite[(55)]{HR2019} and follows from Property I of \texttt{[alg 1]} and Property 1 of \texttt{[alg 2]}.
    \item \eqref{P 2} corresponds to the second displayed equation on p.269 of \cite{HR2019} and follows from Property II of \texttt{[alg 1]} and Property 2 of \texttt{[alg 2]}.
    \item \eqref{P 3} corresponds to  \cite[(57)]{HR2019} and follows from Property III of \texttt{[alg 1]} and Property 3 of \texttt{[alg 2]}.
    \item \eqref{P 4} is related to \cite[(63)]{HR2019}, which follows from the local versions of the estimates in Property III of \texttt{[alg 1]} and Property 3 of \texttt{[alg 2]}. In \S\ref{relating scales sec} below \eqref{P 4} is established by adapting the argument used to prove \cite[(63)]{HR2019}. For this various auxiliary results are required, which are discussed in \S\ref{L2 orthogonality sec}. 
\end{itemize}
Note that the indexing used above is slightly different to that appearing in \cite{HR2019} since here the $\vec{S}_{\ell}$ are indexed according to codimension rather than dimension.
\end{remark}

\begin{remark}
Note that the multiscale grains decomposition detailed above outputs a set of functions $\{f_{{\vec S}_{\ell}}\colon{\vec S}_{\ell}\in \vec{\Sc}_{\ell} \} $ and states certain inequalities that these functions satisfy. This remark provides an informal description of these functions and how they are constructed.

Recall that a multigrain $\vec{S}_{\ell}\in\vec{\Sc}_{\ell}$ is a tuple $(\mathcal{G}_0,\mathcal{G}_1,\ldots,\mathcal{G}_m)$. Here $\mathcal{G}_0$ corresponds to a choice of ball of radius $R$, and for each index $i=1,\ldots,m$, $\mathcal{G}_i$ is a pair $(S_i, B_{r_i})$, where $S_i$ is a variety of codimension $i$ and $B_{r_i} = B(y_i,r_i)$ is a ball of radius $r_i$. 

When $\ell = 0$ and $\vec{S}_{0}=(\mathcal{G}_0)$, then $f_{\vec{S}_{0}}=\sum_{(\theta,v) \in \W_0}f_{\theta,v},$ where $\W_0 \subset \T[r_0]$ consists of those scale $r_0 = R$ wave packets $(\theta,v)$ for which the associated tube $T_{\theta,v}$ intersects $B_{r_0}$.

Now suppose $1\leq i \leq m$ and  $f_{\vec{S}_{i-1}}$ has been defined for all multigrains $\vec{S}_{i-1}$ of level $i-1$. Fixing $\vec{S}_i$ a multigrain of level $i$, the function $f_{\vec{S}_i}$ may be described as follows. Let $\vec{S}_{i-1}$ be the (unique) multigrain from $\vec{\Sc}_{i-1}$ with $\vec{S}_{i} \preceq \vec{S}_{i-1}$. Then heuristically, $f_{\vec{S}_{i}}$ should be thought of as 
\begin{equation}\label{heuristic for fVecS}
f_{\vec{S}_{i}} \textrm{``}=\textrm{''} \sum_{(\theta,v)\in \W_i}(f_{\vec{\mathcal{S}}_{i-1}})_{\theta,v},
\end{equation}
where $\W_i$ consists of those wave packets $(\theta,v)\in \T[r_i]$ for which the associated scale $r_i$ tube $T_{\theta,v}(y_i)$ is $r_i^{-1/2+\delta_i}$ tangent to $S_i$ in $B(y_i,r_i)$. 

Note the quotation marks around the equality in \eqref{heuristic for fVecS}, which are intended warn the reader that \eqref{heuristic for fVecS} is not true in a literal sense. The reason for this is that $f_{\vec{S}_{i}}$ is constructed from $f_{\vec{S}_{i-1}}$ over many steps using an iterative process. In each one of these steps one performs a new wave packet decomposition at some intermediate scale between $r_{i-1}$ and $r_i$, and some of the wave packets might be discarded at each stage. The arguments have been carefully crafted so that these discarded wave packets can be ignored without adversely affecting our estimates, but caution is needed because some of the more straightforward, na\"{\i}ve statements which would follow from \eqref{heuristic for fVecS} are not true. 
\end{remark}

Since each $O \in \O$ has diameter at most $R^{\varepsilon_{\circ}}$, trivially one may bound
\begin{equation*}
    \|Ef_{O}\|_{\BL{k,A_{m-1}}^{p_m}(O)} \lesssim_{\varepsilon} R^{O(\varepsilon_{\circ})}\|f_O\|_{L^2(B^{n-1})}
\end{equation*}
This trivial bound can be applied to the right-hand side of \eqref{P 1} and combined with \eqref{P 2} and the definition of $M(\vec{r}, \vec{D})$ to deduce that
  \begin{equation*}
      \|Ef\|_{\BL{k,A}^{p}(B_{R})} \lesssim_{\varepsilon} \prod_{i = 1}^{m+1} r_{i}^{\frac{\beta_{i-1}-\beta_{i}}{2}}D_{i}^{\frac{\beta_{i-1}}{2} - (\frac{1}{2}-\frac{1}{p})+O(\delta)}R^{O(\varepsilon_{\circ})}\|f\|_2^{2/p}\max_{O \in \O} \|f_O\|_2^{1-2/p}.
\end{equation*}
The problem is now to bound the maximum appearing on the right-hand side of this expression. 

\subsection{Improvement using the multiscale polynomial Wolff axioms} To obtain an improved result, the multigrain structure is exploited using Lemma~\ref{mod Wolff lem}.

\begin{lemma}\label{multi scale lem} 
For $m \leq \ell \leq n$,
\begin{equation*}
   \max_{\vec{S}_\ell \in \vec{\Sc}_\ell} \|f_{\vec{S}_{\ell}}^{\#}\|_2^2 \lesssim_{\varepsilon} \Big(\prod_{i=1}^{\ell}r_{i}^{-1/2}\Big) R^{\varepsilon_{\circ}}\|f\|_{L^{\infty}(B^{n-1})}^2.
\end{equation*}
\end{lemma}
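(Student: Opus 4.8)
The plan is to reduce the estimate, by the orthogonality of the wave packet decomposition, to merely counting the directions that occur among the tubes in $\T[\vec{S}_{\ell}]$, and then to feed that count into Lemma~\ref{mod Wolff lem}. So fix $\vec{S}_{\ell} \in \vec{\Sc}_{\ell}$ and recall from Definition~\ref{nested tube condition} that $\T[\vec{S}_{\ell}]$ is a collection of scale $R = r_0$ wave packets, so that $f_{\vec{S}_{\ell}}^{\#} = \sum_{(\theta,v) \in \T[\vec{S}_{\ell}]} f_{\theta,v}$. First I would pass from the cap-localised orthogonality statement of \S\ref{wave packet sec} to its global form: summing that estimate over a finitely overlapping cover of $B^{n-1}$ by the caps $\theta_* \in \Theta[R]$, which is legitimate because each $f_{\theta,v}$ is supported in its own cap $\theta$ and the caps overlap boundedly, gives
\begin{equation*}
\|f_{\vec{S}_{\ell}}^{\#}\|_{L^2(B^{n-1})}^2 \sim \sum_{(\theta,v) \in \T[\vec{S}_{\ell}]} \|f_{\theta,v}\|_{L^2(B^{n-1})}^2.
\end{equation*}

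Next I would discard the information carried by the translation parameter, bounding the right-hand side by $\sum_{\theta \in \Theta[\vec{S}_{\ell}]} \sum_{v \in R^{1/2}\Z^{n-1}} \|f_{\theta,v}\|_2^2$, where $\Theta[\vec{S}_{\ell}]$ is the associated direction set. For fixed $\theta$, the Plancherel identity for Fourier series (the same orthogonality input, now in the $v$ variable, using $\tilde{\psi}_{\theta} \equiv 1$ on $\supp \psi_{\theta}$) identifies $\sum_{v} \|f_{\theta,v}\|_2^2 \sim \|f\psi_{\theta}\|_{L^2(\theta)}^2$; since $\psi_{\theta}$ is uniformly bounded and supported in a cap of measure $\sim R^{-(n-1)/2}$, this is $\lesssim R^{-(n-1)/2}\|f\|_{L^\infty(B^{n-1})}^2$. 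Hence
\begin{equation*}
\|f_{\vec{S}_{\ell}}^{\#}\|_{2}^2 \lesssim \big(\#\Theta[\vec{S}_{\ell}]\big)\, R^{-(n-1)/2}\, \|f\|_{L^\infty(B^{n-1})}^2.
\end{equation*}
Finally I would invoke Lemma~\ref{mod Wolff lem}: since every multigrain produced by the decomposition in \S\ref{multigrain dec sec} has complexity $O_{\varepsilon}(1)$, the lemma yields $\#\Theta[\vec{S}_{\ell}] \lesssim_{\varepsilon} \big(\prod_{i=1}^{\ell} r_i^{-1/2}\big) R^{(n-1)/2 + \varepsilon_{\circ}}$ (the case $\ell = 0$ being the trivial bound $\#\Theta[\vec{S}_0] \lesssim R^{(n-1)/2}$). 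Substituting this and cancelling the factor $R^{(n-1)/2}$ gives precisely the claimed inequality, uniformly over $\vec{S}_{\ell} \in \vec{\Sc}_{\ell}$.

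I do not anticipate a genuine obstacle here: the lemma is essentially a repackaging of Lemma~\ref{mod Wolff lem} through standard wave-packet orthogonality, and the function $f$ enters only through $\|f\|_{L^\infty(B^{n-1})}$. The only points that deserve a little care are the passage from the cap-localised orthogonality of \S\ref{wave packet sec} to the global $L^2$ version (handled by finite overlap of $\Theta[R]$ and summation) and the observation that admissibility of the multigrain complexity allows the constant $C_{n,d,\varepsilon}$ appearing in Lemma~\ref{mod Wolff lem} to be absorbed into the $\lesssim_{\varepsilon}$ notation.
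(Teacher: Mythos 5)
Your proposal is correct and follows essentially the same route as the paper: both arguments use wave-packet $L^2$ orthogonality to reduce the bound to counting the directions in $\Theta[\vec{S}_{\ell}]$, control the per-cap contribution by $R^{-(n-1)/2}\|f\|_{L^\infty(B^{n-1})}^2$, and then invoke Lemma~\ref{mod Wolff lem}. The only (cosmetic) difference is that the paper works with cap-averaged $L^2$ norms and the support of $f_{\vec{S}_{\ell}}^{\#}$, whereas you use Plancherel in the $v$ variable for each fixed $\theta$.
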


\begin{proof} Letting $\|\,\cdot\,\|_{L^2_{\mathrm{avg}}(\theta)}$ denote the $L^2$-norm taken with respect to the normalised (to have mass 1) Lebesgue measure on $\theta$, one may write
\begin{equation*}
     \|f_{\vec{S}_{\ell}}^{\#}\|_2^2 \sim R^{-(n-1)/2} \sum_{\theta \in \Theta(R)} \|f_{\vec{S}_{\ell}}^{\#}\|_{L^2_{\mathrm{avg}}(\theta)}^2,
\end{equation*}
where the right-hand sum can of course be restricted to those $\theta$ that intersect $\supp f_{\vec{S}_{\ell}}^{\#}$. Since $f_{\vec{S}_{\ell}}^{\#}$ is the sum of $f_{\theta,v}$ over all $(\theta,v) \in \T[\vec{S}_{\ell}]$, it follows that
\begin{equation*}
    \#\big\{\theta \in \Theta(R) : \theta \cap \supp f_{\vec{S}_{\ell}}^{\#} \neq \emptyset \big\} \lesssim \#\Theta[\vec{S}_{\ell}] 
\end{equation*}
where $\Theta[\vec{S}_{\ell}]$ is as defined in Definition~\ref{nested tube condition}. Consequently, 
\begin{equation*}
    \|f_{\vec{S}_{\ell}}^{\#}\|_{L^2(B^{n-1})}^2 \lesssim  R^{-(n-1)/2} \cdot \#\Theta[\vec{S}_{\ell}] \cdot \max_{\theta \in \Theta(R)} \|f_{\vec{S}_{\ell}}^{\#}\|_{L^2_{\mathrm{avg}}(\theta)}^2.
\end{equation*}
By orthogonality between the wave packets,
\begin{equation*}
    \max_{\theta \in \Theta(R)} \|f_{\vec{S}_{\ell}}^{\#}\|_{L^2_{\mathrm{avg}}(\theta)}^2 \lesssim \max_{\theta \in \Theta(R)} \|f\|_{L^2_{\mathrm{avg}}(\theta)}^2 \leq \|f\|_{L^{\infty}(B^{n-1})}^2.
    \end{equation*}
On the other hand, Lemma~\ref{mod Wolff lem} implies that 
\begin{equation*}
    \#\Theta[\vec{S}_{\ell}] \lesssim_{\varepsilon} \big(\prod_{i=1}^{\ell}r_{i}^{-1/2}\big)R^{(n-1)/2 + \varepsilon_{\circ}} .
\end{equation*}
Combining the three previous displays yields the desired result. 
\end{proof}

Combining Lemma~\ref{multi scale lem} with property \eqref{P 4} of the decomposition,
\begin{equation*}
 \max_{\vec{S}_\ell \in \vec{\Sc}_\ell} \|f_{\vec{S}_{\ell}}\|_2^2 \lesssim_{\varepsilon}  r^{\ell/2} \Big( \prod_{i=1}^{\ell}r_{i}^{-1/2}\Big)^2 \Big(\prod_{i=1}^{\ell}D_{i}^{\delta}\Big) R^{O(\varepsilon_{\circ})}\|f\|_{L^{\infty}(B^{n-1})}^2.
\end{equation*}
Substituting this estimate into the right-hand side of \eqref{P 3}, one concludes that
\begin{equation}\label{multi scale est}
    \max_{O \in \O}\|f_{O}\|_2^2 \lesssim_{\varepsilon} \Big(\prod_{i=1}^{m}r_{i}^{-1/2} D_{i}^{\delta}\Big)\Big(\prod_{i=1}^{\ell}r_{i}^{-1/2}\Big)\Big( \prod_{i = \ell+1}^{m+1}D_{i}^{-(n-i)}\Big) R^{O(\varepsilon_{\circ})} \|f\|_{L^{\infty}(B^{n-1})}^2
\end{equation}
for all $0 \leq \ell \leq m$.

\subsection{Fixing the exponents and concluding the argument}\label{fixing exps sec}  Varying the $\ell$ parameter in \eqref{multi scale est} produces $m+1$ different bounds. Combining these inequalities by taking a weighted geometric mean, one arrives at the following key estimate.\bigskip

\begin{mdframed}[style=MyFrame]
\begin{key estimate} Let $0 \leq \gamma_0, \dots, \gamma_m \leq 1$ satisfy $\sum_{j = 0}^{m} \gamma_{j} = 1$. Then
\begin{equation*}
    \max_{O \in \O}\|f_{O}\|_2^2 \lesssim_{\varepsilon} \prod_{i=1}^{m+1}r_{i}^{-\frac{1 + \sigma_{i}}{2}} D_{i}^{-(n-i)(1-\sigma_{i}) + O(\delta)}  R^{O(\varepsilon_{\circ})}\|f\|_{L^{\infty}(B^{n-1})}^2,
\end{equation*}
where $\sigma_{i} := \sum_{j=i}^{m} \gamma_j$ for $0 \leq i \leq m$ and $\sigma_{m+1} := 0$.
\end{key estimate}
\end{mdframed}
\bigskip

The key estimate may be plugged into the earlier inequality 
  \begin{equation*}
    \|Ef\|_{\BL{k,A}^{p}(B_{R})} \lesssim_{\varepsilon} \prod_{i = 1}^{m+1} r_{i}^{\frac{\beta_{i-1}-\beta_{i}}{2}}D_{i}^{\frac{\beta_{i-1}}{2} - (\frac{1}{2}-\frac{1}{p})+O(\delta)}R^{O(\varepsilon_{\circ})}\|f\|_2^{2/p}\max_{O \in \O} \|f_O\|_2^{1-2/p}
\end{equation*}
to yield the bound
  \begin{equation}\label{chosing exp 1}
      \|Ef\|_{\BL{k,A}^{p}(B_{R})} \lesssim_{\varepsilon} \prod_{i = 1}^{m+1} r_{i}^{X_{i}}D_{i}^{Y_{i}+O(\delta)}  R^{O(\varepsilon_{\circ})}\|f\|_{L^{\infty}(B^{n-1})}
\end{equation}
where, recalling $p =p_0$, the $X_i$, $Y_i$ exponents are given by
\begin{align*}
    X_{i}&:= \frac{\beta_{i-1}-\beta_{i}}{2}-\frac{(1 +\sigma_{i})}{2}\Big(\frac{1}{2}-\frac{1}{p_0}\Big); \\
    Y_{i}&:= \frac{\beta_{i-1}}{2} - \big(1+(n-i)(1-\sigma_{i})\big)\Big(\frac{1}{2}-\frac{1}{p_0}\Big).
\end{align*}

At this point the values of the free parameters $p_i$ and $\gamma_i$ are fixed. Define
\begin{align*}
    \gamma_j &:= \frac{n-m-1}{2} \cdot \frac{1}{(n-j)(n-j-1)} \cdot \prod_{i=n-m}^{n-j} \frac{2i}{2i+1} \quad \textrm{for $1 \leq j \leq m$,} \\
\gamma_0 &:= 1 - \sum_{j=1}^{m}\gamma_j,
\end{align*}
so that the $\gamma_j$ sum to 1. One may also show, using some algebra, that $0 \leq \gamma_j \leq 1$. Let $p_m := 2\cdot \frac{n-m}{n-m -1}$ and define the remaining $p_i$ in terms of the $\gamma_j$ via the equation
\begin{equation*} 
\Big(\frac{1}{2} - \frac{1}{p_{i}}\Big)^{-1}= 2n - m - i + \sum_{j=i+1}^{m}(j-i)\gamma_j.
\end{equation*} 
With these parameter choices one may verify using simple (yet rather lengthy) algebraic manipulations that $X_{i}, Y_{i} = 0$ for all $1 \leq i \leq m$ and $Y_{m+1} = 0$ and, furthermore, 
\begin{equation*}
p_0 =2+\frac{6}{2(n-1)+(n-m-1)\prod_{i=n-m}^{n-1}\frac{2i}{2i+1}}.
\end{equation*}
These computations are similar to those appearing in \cite{HR2019} and are left to the interested reader. 
The worst case scenario (in the sense that $p_0$ is maximised) occurs when $m$ is as large as possible. Recall that $0 \leq m \leq n-k$; by taking $m = n-k$ one obtains the exponent $p_n(k)$ featured in Theorem~\ref{broad thm}. 

With the above choice of exponents, the $r_{i}^{X_{i}}D_{i}^{Y_{i}}$ factors in \eqref{chosing exp 1} are admissible (indeed, they are equal to 1). However, the $D_i^{O(\delta)}$ factors may still be large. To deal with this, one may slightly perturb the exponents to decrease the $Y_i$ value so as to ensure that the $Y_i + O(\delta)$ is non-positive. This results in a slightly larger $p$ value and, since $\delta > 0$ is arbitrary, establishes Theorem~\ref{broad thm} in the open range $p > p_n(k)$. The closed range of estimates then follows trivially via H\"older's inequality, using the fact that $R^{\varepsilon}$ losses are permitted in the constants.\hfill $\Box$




\section{$L^2$ orthogonality and transverse equidistribution revisited}\label{L2 orthogonality sec}




\subsection{Comparing wave packets at different scales}\label{comparing scales sec} Fix a large scale $r \geq 1$ and a smaller scale $r^{1/2} \leq \rho \leq r$. Given $g \in L^1(B^{n-1})$ one may form the scale $r$ wave packet decomposition 
\begin{equation}\label{big wave packet}
    g = \sum_{(\theta,v) \in \T[r]} g_{\theta,v} + \mathrm{RapDec}(r)\|g\|_2.
\end{equation}
Alternatively, given a ball $B(y,\rho)$ with $y \in B(0,r)$ one may form the scale $\rho$ wave packet decomposition of $g$ over $B(y,\rho)$. In particular, let $\phi \colon \R^n \times \R^{n-1} \to \R$ denote the phase function associated to the extension operator $E$; that is,
\begin{equation*}
    \phi(x;\omega) := \inn{x'}{\omega} + x_n|\omega|^2, \qquad x = (x',x_n) \in \R^{n-1} \times \R,\,\, \omega \in \R^{n-1}. 
\end{equation*}
Write
\begin{equation}\label{recentre g}
    \tilde{g}(\omega) := e^{i \phi(y;\omega)}g(\omega)
\end{equation}
so that $Eg(x) = E\tilde{g}(\tilde{x})$ for $x = y + \tilde{x}$. One may then decompose
\begin{equation}\label{small wave packet}
    \tilde{g} = \sum_{(\tilde{\theta},\tilde{v}) \in \T[\rho]} \tilde{g}_{\tilde{\theta},\tilde{v}} + \mathrm{RapDec}(r)\|g\|_2. 
\end{equation}

Following the discussion in \cite[\S 7]{Guth2018}, the purpose of this section is to compare properties of the decompositions \eqref{big wave packet} and \eqref{small wave packet} under various hypotheses on $g$. In particular, it is useful to study properties of functions formed by restricted sums of wave packets. 

\begin{definition} For $\widetilde{\W}\subseteq \T[\rho]$ define $\uparrow\! \widetilde{\W}$ to be the set of all $(\theta,v) \in \T[r]$ for which there exists some $(\tilde{\theta},\tilde{v}) \in \widetilde{\W}$ satisfying
    \begin{enumerate}[i)]
     \item $\dist(\tilde{\theta},\theta) \lesssim \rho^{-1/2}$;
    \item $\dist(T_{\tilde{\theta},\tilde{v}}(y),\, T_{\theta,v} \cap B(y,\rho)\big) \lesssim r^{1/2+\delta}$.
\end{enumerate}
 Furthermore, for any $g \in L^1(B^{n-1})$ let
\begin{equation*}
 \tilde{g}|_{\widetilde{\W}} := \sum_{(\tilde{\theta},\tilde{v}) \in \widetilde{\W}} \tilde{g}_{\tilde{\theta},\tilde{v}}, \qquad   g|_{\uparrow\widetilde{\W}} := \sum_{(\theta,v) \in \uparrow\widetilde{\W}} g_{\theta,v}.
\end{equation*}
\end{definition}

The definition of $\uparrow \widetilde{\W}$ is motivated by the following.

\begin{lemma}\label{W concentration lem} Given $g \in L^1(B^{n-1})$ and $\widetilde{\W}\subseteq \T[\rho]$, one has
\begin{equation*}
    \tilde{g}|_{\widetilde{\W}} = \sum_{(\tilde{\theta},\tilde{v}) \in \widetilde{\W}} \big(g|_{\uparrow \widetilde{\W}}\big)\;\widetilde{}\!\!_{\tilde{\theta},\tilde{v}} + \mathrm{RapDec}(r)\|g\|_2. 
\end{equation*}
\end{lemma}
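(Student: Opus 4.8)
The plan is to show that the two expressions for $\tilde{g}|_{\widetilde{\W}}$ agree up to rapidly decreasing error by passing through the extension operator and exploiting spatial localisation of wave packets. First I would record the elementary point that it suffices to prove the identity after applying $E$ and restricting to the ball $B(y,\rho)$, i.e. that
\begin{equation*}
 E\big(\tilde{g}|_{\widetilde{\W}}\big)(\tilde x) = E\Big(\sum_{(\tilde\theta,\tilde v) \in \widetilde{\W}} \big(g|_{\uparrow\widetilde{\W}}\big)\;\widetilde{}\!\!_{\tilde\theta,\tilde v}\Big)(\tilde x) + \mathrm{RapDec}(r)\|g\|_2
\end{equation*}
for $\tilde x$ with $y + \tilde x \in B(y,\rho)$, since both sides are (morally) supported in frequency near the same caps and one can invert; alternatively, and more cleanly, one argues entirely on the Fourier side, comparing the scale-$\rho$ wave packet coefficients of $\tilde g|_{\widetilde{\W}}$ and of $\big(g|_{\uparrow\widetilde{\W}}\big)\,\widetilde{}$. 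The key observation is that the scale $\rho$ wave packet decomposition of $\big(g|_{\uparrow\widetilde{\W}}\big)\,\widetilde{}$ (taken over $B(y,\rho)$) picks out exactly those $(\tilde\theta,\tilde v) \in \T[\rho]$ whose tube $T_{\tilde\theta,\tilde v}(y)$ is comparable, at scale $r^{1/2+\delta}$ and direction $\rho^{-1/2}$, to some scale $r$ tube $T_{\theta,v}$ with $(\theta,v) \in \uparrow\widetilde{\W}$; by the very definition of $\uparrow\widetilde{\W}$ this forces $(\tilde\theta,\tilde v) \in \widetilde{\W}$ up to acceptable error.

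More concretely, the steps in order are: (1) expand $g|_{\uparrow\widetilde{\W}} = \sum_{(\theta,v)\in\uparrow\widetilde{\W}} g_{\theta,v}$ and recentre to get $\big(g|_{\uparrow\widetilde{\W}}\big)\,\widetilde{} = \sum_{(\theta,v)\in\uparrow\widetilde{\W}} e^{i\phi(y;\cdot)} g_{\theta,v}$; (2) apply the scale $\rho$ wave packet decomposition over $B(y,\rho)$ termwise and use the spatial concentration property from \S\ref{wave packet sec} together with the fact that $g_{\theta,v}$ has $Eg_{\theta,v}$ concentrated on $T_{\theta,v}$: when one projects $e^{i\phi(y;\cdot)} g_{\theta,v}$ onto a scale $\rho$ cap $\tilde\theta$ and a spatial translate $\tilde v$, the coefficient is $\mathrm{RapDec}(r)\|g\|_2$ unless $\dist(\tilde\theta,\theta)\lesssim\rho^{-1/2}$ and $T_{\tilde\theta,\tilde v}(y)$ lies within $O(r^{1/2+\delta})$ of $T_{\theta,v}\cap B(y,\rho)$ — this is the standard stationary phase / non-stationary phase dichotomy governing how a scale $r$ wave packet refines into scale $\rho$ wave packets, and is exactly the content of \cite[\S7]{Guth2018}; (3) by the definition of $\uparrow\widetilde{\W}$, the surviving $(\tilde\theta,\tilde v)$ all lie in $\widetilde{\W}$ (using that $\widetilde{\W}\subseteq \T[\rho]$ and the separation of the lattice $\rho^{1/2}\Z^{n-1}$ to rule out double counting up to $O(1)$ multiplicity); (4) conversely, for each $(\tilde\theta,\tilde v)\in\widetilde{\W}$, the corresponding scale $\rho$ wave packet of $\tilde g$ is captured because the scale $r$ wave packets $(\theta,v)$ making a non-negligible contribution to that coefficient are, by the same dichotomy, precisely those with $(\theta,v)\in\uparrow\widetilde{\W}$; (5) combine via Lemma on orthogonality / triangle inequality over the $O(1)$-overlapping families, absorbing the errors into a single $\mathrm{RapDec}(r)\|g\|_2$ term after summing the finitely many (polynomially in $r$) wave packets.

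The main obstacle is step (2): making rigorous the assertion that projecting a single scale $r$ wave packet onto the scale $\rho$ wave packet basis over $B(y,\rho)$ yields coefficients that are rapidly decaying outside the geometrically-predicted set of $(\tilde\theta,\tilde v)$. This requires a careful stationary phase analysis of $E g_{\theta,v}$ at scale $\rho$ — one must track both the frequency localisation ($\dist(\tilde\theta,\theta)\lesssim\rho^{-1/2}$, which is essentially automatic since $g_{\theta,v}$ is frequency supported in $\theta$ of radius $r^{-1/2}\le\rho^{-1/2}$) and, more delicately, the spatial localisation of the scale-$\rho$ tube relative to the scale-$r$ tube, which uses that $Eg_{\theta,v}$ is concentrated on $T_{\theta,v}$ together with the fact that the scale $\rho$ wave packet coefficient essentially computes a windowed Fourier integral whose phase is non-stationary away from $T_{\theta,v}\cap B(y,\rho)$. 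Since this is precisely the comparison carried out in \cite[\S7]{Guth2018}, I would invoke that analysis, noting only the (routine) bookkeeping needed to account for the translation by $y$ and the $r^{1/2+\delta}$ rather than $r^{1/2}$ thickening built into our tubes $T_{\theta,v}$. Everything else — the termwise application, the identification of index sets via the definition of $\uparrow\widetilde{\W}$, and the absorption of errors — is routine.
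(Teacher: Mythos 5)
Your proposal follows essentially the same route as the paper: expand into scale-$r$ wave packets, recentre, and invoke the integration-by-parts dichotomy of \cite[\S 7]{Guth2018} (Lemmas 7.1--7.2 there) to see that, for a fixed $(\tilde{\theta},\tilde{v}) \in \widetilde{\W}$, the only $(\theta,v) \in \T[r]$ contributing non-negligibly satisfy $\dist(\theta,\tilde{\theta}) \lesssim \rho^{-1/2}$ and $|v-\tilde{v}| \lesssim r^{1/2+\delta}$, whence $T_{\tilde{\theta},\tilde{v}}(y) \subseteq C\, T_{\theta,v} \cap B(y,\rho)$ and so $(\theta,v) \in\, \uparrow\!\widetilde{\W}$; this is your step (4), and together with linearity and summing over the polynomially many wave packets it already yields the lemma, exactly as in the paper. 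One caveat: the ``key observation'' in your outline and your step (3) --- that the scale-$\rho$ wave packets surviving in the decomposition of $\big(g|_{\uparrow\widetilde{\W}}\big)\;\widetilde{}\;$ must themselves lie in $\widetilde{\W}$ --- is false in general: a single scale-$r$ tube meets $B(y,\rho)$ in a slab of width $r^{1/2+\delta}$ containing many scale-$\rho$ tubes, most of which need not belong to $\widetilde{\W}$ (take $\widetilde{\W}$ a single wave packet). Fortunately that claim is not needed, since the identity to be proved already restricts the outer sum to $(\tilde{\theta},\tilde{v}) \in \widetilde{\W}$; only the one-directional containment of step (4) is required (and ``precisely those with $(\theta,v)\in\,\uparrow\!\widetilde{\W}$'' should read ``contained in $\uparrow\!\widetilde{\W}$''), but do not rely on the stronger concentration statement elsewhere --- this is exactly the kind of na\"ive strengthening the paper warns against.
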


In particular, combining Lemma~\ref{W concentration lem} with the orthogonality property of the wave packets,
\begin{equation}\label{basic W L2}
     \|\tilde{g}|_{\widetilde{\W}}\|_2^2 \lesssim \|g|_{\uparrow \widetilde{\W}}\|_2^2 + \mathrm{RapDec}(r)\|g\|_2^2. 
\end{equation}
The proof of Lemma~\ref{W concentration lem} follows directly from the observations in \cite[\S7]{Guth2018}.

\begin{proof}[Proof (of Lemma~\ref{W concentration lem})] Forming the scale $r$-wave decomposition and using the linearity of the mapping $f \mapsto \tilde{f}_{\tilde{\theta},\tilde{v}}$, one may write
\begin{equation*}
\tilde{g}|_{\widetilde{W}} = \sum_{(\tilde{\theta},\tilde{v}) \in \widetilde{\W}}\sum_{(\theta,v) \in \T[r]} \big(g|_{\theta,v}\big)\;\widetilde{}\!\!_{\tilde{\theta},\tilde{v}} + \mathrm{RapDec}(r)\|g\|_2.
\end{equation*}
An integration-by-parts argument (see, for instance, \cite[Lemma 7.1]{Guth2018}) shows that the function $\big(g_{\theta,v}\big)_{\tilde{\theta},\tilde{v}}$ is rapidly decaying whenever $(\tilde{\theta},\tilde{v})$ fails to satisfy the conditions
\begin{equation}\label{W concentration 1}
   \dist(\theta,\tilde{\theta}) \lesssim \rho^{-1/2} \quad \textrm{and} \quad |v - \tilde{v}| \lesssim r^{1/2+\delta}.
\end{equation}
 On the other hand, if $(\tilde{\theta},\tilde{v})$ does satisfy \eqref{W concentration 1}, then it is not difficult to show that 
\begin{equation*}
    T_{\tilde{\theta},\tilde{v}}(y) \subseteq C \cdot T_{\theta,v} \cap B(y,\rho);
\end{equation*}
indeed, this is essentially part of the content of \cite[Lemma 7.2]{Guth2018}. Combining these observations, one deduces that $\big(g_{\theta,v}\big)_{\tilde{\theta},\tilde{v}}$ is rapidly decaying whenever $(\theta,v) \not\in \uparrow\! \widetilde{\W}$, and the desired identity follows. 
\end{proof}




\subsection{Transverse equidistribution estimates revisited}

Continuing to work with the scales $r^{1/2} \leq \rho \leq r$ from the previous section,  fix a transverse complete intersection $\bZ$ in $\R^n$ of codimension $j$ and degree at most $d$. Suppose $g \in L^1(B^{n-1})$ concentrated on scale $r$ wave packets belonging to the family
\begin{equation*}
    \T_{\bZ} := \big\{(\theta,v) \in \T[r] : \textrm{$T_{\theta,v}$ is $r^{-1/2+\delta_j}$-tangent to $\bZ$ in $B(0,r)$}\big\}. 
\end{equation*}
It is shown in \cite[\S 7]{Guth2018} that $\tilde{g}$ is concentrated on scale $\rho$ wave packets which belong to the union of the sets
\begin{equation}\label{rho tangent}
    \widetilde{\T}_{\bZ+b} := \big\{ (\tilde{\theta},\tilde{v}) \in \T[\rho] : \textrm{$T_{\tilde{\theta},\tilde{v}}(y)$ is $\rho^{-1/2+\delta_j}$-tangent to $\bZ+b$ in $B(y,\rho)$}\big\}
\end{equation}
as $b$ varies over vectors in $\R^n$ satisfying $|b| \lesssim r^{1/2 + \delta_j}$. Thus, one is led to consider the functions
\begin{equation*}
    \tilde{g}_b := \sum_{(\tilde{\theta},\tilde{v}) \in \widetilde{\T}_{\bZ+b}} \tilde{g}_{\tilde{\theta},\tilde{v}}.
\end{equation*}

In this subsection certain $L^2$ bounds for the functions $\tilde{g}_b$ obtained in \cite{Guth2018} are generalised, in view of establishing property \eqref{P 4} of the decomposition from the previous section.

\begin{lemma}\label{trans eq lem} Suppose $\bZ$ is a transverse complete intersection of codimension $j$ and degree at most $d$ and $b \in \R^n$ with $|b| \lesssim r^{1/2 + \delta_j}$. If $g \in L^1(B^{n-1})$ is concentrated on wave packets from $\T_{\bZ}$ and $\widetilde{\W} \subseteq \widetilde{\T}_{\bZ + b}$, then
\begin{equation*}
    \|\tilde{g}|_{\widetilde{\W}}\|_{2}^2 \lesssim r^{O(\delta_j)}(r/\rho)^{-j/2} \|g|_{\uparrow\widetilde{\W}}\|_{2}^2 + \mathrm{RapDec}(r)\|g\|_2^2.
\end{equation*}
\end{lemma}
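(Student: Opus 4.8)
The statement to prove is Lemma~\ref{trans eq lem}: under the tangency hypotheses, $\|\tilde g|_{\widetilde\W}\|_2^2 \lesssim r^{O(\delta_j)}(r/\rho)^{-j/2}\|g|_{\uparrow\widetilde\W}\|_2^2 + \mathrm{RapDec}(r)\|g\|_2^2$. The natural approach is to reduce to the transverse equidistribution estimate of Guth \cite[\S7]{Guth2018} by a restricted-sum localisation. The key point is that \eqref{basic W L2} already gives the reverse-type comparison $\|\tilde g|_{\widetilde\W}\|_2^2 \lesssim \|g|_{\uparrow\widetilde\W}\|_2^2 + \mathrm{RapDec}(r)\|g\|_2^2$, but this loses the gain factor $(r/\rho)^{-j/2}$; that gain is exactly what transverse equidistribution supplies, reflecting the fact that wave packets tangent to a codimension-$j$ variety at scale $\rho$ are spread out in the $j$ transverse directions, so their $L^2$ mass at scale $\rho$ is smaller by $(\rho/r)^{j/2}$ than a naive count would give.

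\textbf{Main steps.} First, I would recall (from \cite[\S7]{Guth2018}, and as already quoted in the excerpt preceding the lemma) that since $g$ is concentrated on $\T_{\bZ}$, the rescaled function $\tilde g$ is concentrated on $\bigcup_b \widetilde\T_{\bZ+b}$ with $|b|\lesssim r^{1/2+\delta_j}$, and that for a \emph{single} translate $\bZ+b$ Guth's transverse equidistribution inequality reads, roughly, $\|\tilde g_b\|_{L^2_{\mathrm{avg}}(B(y,\rho))}^2 \lesssim r^{O(\delta_j)}(r/\rho)^{-j/2}\|\tilde g_b\|_{L^2_{\mathrm{avg}}(B(0,r))}^2$ after suitable normalisation; more precisely the version needed is the one comparing the scale-$\rho$ wave packet $L^2$ norm to the scale-$r$ one. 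Second, I would apply Lemma~\ref{W concentration lem}: it identifies $\tilde g|_{\widetilde\W}$, up to $\mathrm{RapDec}(r)\|g\|_2$, with the scale-$\rho$ wave packet decomposition of $g|_{\uparrow\widetilde\W}$ restricted to $\widetilde\W$. So the left-hand side is controlled by the scale-$\rho$ wave packet $L^2$ mass of $g|_{\uparrow\widetilde\W}$, while the right-hand side target $\|g|_{\uparrow\widetilde\W}\|_2^2$ is its scale-$r$ wave packet $L^2$ mass. Third, since $\widetilde\W \subseteq \widetilde\T_{\bZ+b}$ for a single $b$, all the wave packets in $\widetilde\W$ are $\rho^{-1/2+\delta_j}$-tangent to the \emph{fixed} codimension-$j$ variety $\bZ+b$; this is precisely the hypothesis under which the transverse equidistribution estimate applies, and it yields the $(r/\rho)^{-j/2}$ saving. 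One then unwinds the normalisations ($L^2_{\mathrm{avg}}$ versus $L^2$, the $R^{n-1}$-type volume factors) to arrive at the stated inequality, the $r^{O(\delta_j)}$ absorbing the $r^{\delta_j}$-thickenings in the definition of tangency.

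\textbf{Main obstacle.} The hard part is making the transverse equidistribution estimate apply cleanly to a \emph{restricted} subcollection $\widetilde\W$ of tangent wave packets rather than to the full $\tilde g_b$. Guth's argument produces equidistribution for the full sum over $\widetilde\T_{\bZ+b}$, exploiting that this sum has Fourier support adapted to a $(n-j)$-dimensional tangent structure; one must check that passing to a subsum $\widetilde\W$ does not destroy the relevant Fourier/spatial localisation. This is handled by observing that the equidistribution mechanism is really a pointwise-in-$\theta$ (direction-by-direction) statement once one fixes the variety, combined with the $L^2$ orthogonality between wave packets of distinct $(\theta,v)$; restricting to $\widetilde\W$ only removes non-negative terms on the left and, via $\uparrow\widetilde\W$ on the right, consistently removes the corresponding scale-$r$ packets, so the inequality is preserved term by term. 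A secondary technical point is that $\widetilde\W$ may meet several translates $\bZ+b$; but the lemma fixes a single $b$ with $\widetilde\W\subseteq\widetilde\T_{\bZ+b}$, so this does not arise here, and the multi-$b$ bookkeeping is deferred to wherever the lemma is invoked. Beyond these structural checks, the remaining work is routine tracking of the parameters $\rho$, $r$, $\delta_j$ through Guth's inequality.
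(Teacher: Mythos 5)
Your plan has the right general shape (reduce to Guth's transverse equidistribution from \cite[\S 7]{Guth2018}, use Lemma~\ref{W concentration lem} to pass between the scale-$r$ and scale-$\rho$ restricted sums), but the step on which the whole lemma turns --- keeping $\|g|_{\uparrow\widetilde{\W}}\|_2^2$ rather than $\|g\|_2^2$ on the right while retaining the $(r/\rho)^{-j/2}$ gain --- is justified by an incorrect mechanism. You assert that the equidistribution estimate is ``pointwise-in-$\theta$'' and that the inequality is ``preserved term by term'' using $L^2$ orthogonality between distinct scale-$r$ wave packets. While \cite[Lemma 7.5]{Guth2018} does apply to a single wave packet, such per-packet (or per-$\theta$) estimates cannot simply be summed: all scale-$r$ packets $(\theta,v)$ whose caps lie in a common $\rho^{-1/2}$-cap and whose positions agree modulo $r^{1/2}$ load the \emph{same} scale-$\rho$ packets $(\tilde{\theta},\tilde{v})$ after re-decomposition, scale-$r$ orthogonality says nothing about these cross terms, and a crude Cauchy--Schwarz costs a factor of order $(r/\rho)^{(n-1)/2}$, destroying the gain. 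The paper's proof resolves exactly this point by decomposing $g$ into the intermediate blocks $g_{\kappa,w}$, $(\kappa,w)\in\cT=\Theta[\rho]\times r^{1/2}\Z^{n-1}$: these are the coarsest groups for which both (a) Lemma~\ref{aux trans eq lem} (Guth's Lemma 7.5) applies, since each $g_{\kappa,w}$ is concentrated on $\T_{\bZ}\cap\T_{\kappa,w}$, and (b) the re-decomposed pieces $(g_{\kappa,w})\,\widetilde{}\,$ remain almost orthogonal at scale $\rho$, being concentrated on the finitely overlapping sets $\widetilde{\T}_{\kappa,w}$. The restriction on the right-hand side is then produced by the geometric inclusion \eqref{trans eq 10}, i.e.\ that every block $\T_{\kappa,w}$ with $\widetilde{\T}_{\kappa,w}\cap\widetilde{\W}\neq\emptyset$ is contained in $\uparrow\!\widetilde{\W}$, which is where the defining conditions of $\uparrow\!\widetilde{\W}$ enter via \eqref{trans eq 3}. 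None of this block structure or the inclusion argument appears in your write-up, and it is not a routine detail: it is the content of the lemma beyond \cite[Lemma 7.6]{Guth2018}.

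Your ``main steps'' do hint at an alternative viable route: apply the full-function estimate (Guth's Lemma 7.6, quoted in the remark after the lemma) not to $g$ but to $g':=g|_{\uparrow\widetilde{\W}}$, and combine Lemma~\ref{W concentration lem} with scale-$\rho$ orthogonality (using $\widetilde{\W}\subseteq\widetilde{\T}_{\bZ+b}$) to get $\|\tilde{g}|_{\widetilde{\W}}\|_2\lesssim\|\tilde{g}'_b\|_2$ up to rapidly decaying errors. But you never verify the hypothesis needed to invoke the equidistribution estimate for $g'$: tangency of the scale-$\rho$ packets in $\widetilde{\W}$ (your third step) is not the hypothesis of Guth's lemma --- what is required is that $g'$ is concentrated on scale-$r$ wave packets tangent to $\bZ$, with slightly enlarged constants, which needs an argument that re-decomposing the restricted sum only moves mass to packets within $O(r^{1/2+\delta})$ of tubes in $\uparrow\!\widetilde{\W}\cap\T_{\bZ}$ (harmless since $\delta\ll\delta_j$), together with the observation that Guth's lemma tolerates the enlarged constants at the price of the $r^{O(\delta_j)}$ factor. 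As written, the crucial restricted-family step rests on a false ``term by term'' claim, so the proposal has a genuine gap.
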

\begin{remark} By the orthogonality properties of the wave packets, 
\begin{equation*}
    \|g|_{\uparrow\widetilde{\W}}\|_{2}^2  \lesssim \|g\|_2^2. 
\end{equation*}
On the other hand, if $\widetilde{\W} = \widetilde{\T}_{\bZ + b}$, then $\tilde{g}|_{\widetilde{\W}} = \tilde{g}_b$ and so Lemma~\ref{trans eq lem} implies that
\begin{equation*}
    \|\tilde{g}_b\|_2^2 \lesssim r^{O(\delta_j)} (r/\rho)^{-j/2} \|g\|_2^2,
\end{equation*}
which is precisely the estimate from  \cite[Lemma 7.6]{Guth2018}. 
\end{remark}

The proof of Lemma~\ref{trans eq lem} follows from a minor modification of the argument used to establish \cite[Lemma 7.6]{Guth2018}. In particular, the key ingredient is an auxiliary inequality from \cite[Lemma 7.5]{Guth2018}; in order to recall this lemma, a few preliminary definitions are in order. 

Partition $\T[r]$ into disjoint sets $\T_{\kappa,w}$ indexed by $(\kappa,w) \in \cT := \Theta[\rho]\times r^{1/2}\Z^{n-1}$ satisfying 
\begin{equation*}
    \dist(\theta, \kappa) \lesssim \rho^{-1/2} \quad \textrm{and} \quad |v + (\partial_{\omega}\phi)(y;\omega_{\theta}) - w| \lesssim r^{1/2} \quad \textrm{for all $(\theta,v) \in \T_{\kappa,w}$.}
\end{equation*}
Accordingly, write
\begin{equation*}
    g_{\kappa,w} := \sum_{(\theta,v) \in \T_{\kappa,w}} g_{\theta,v} \qquad \textrm{for all $(\kappa,w) \in \cT$}
\end{equation*}
so that
\begin{equation}\label{trans eq 1}
    g = \sum_{(\kappa,w) \in \cT} g_{\kappa,w} + \mathrm{RapDec}(r)\|g\|_2. 
\end{equation}
This decomposition satisfies the following properties:
\begin{itemize}
    \item By the $L^2$-orthogonality between the wave packets, 
    \begin{equation}\label{trans eq 2}
    \|g\|_2^2 \sim \sum_{(\kappa,w) \in \cT} \|g_{\kappa,w}\|_2^2 \qquad \textrm{and} \qquad  \|g_{\kappa,w}\|_2^2 \sim \sum_{(\theta,v) \in \T_{\kappa,w}} \|g_{\theta,v}\|_2^2 . 
\end{equation}
\item Each $(g_{\kappa,w})\;\widetilde{}\;$ is concentrated on scale $\rho$ wave packets belonging to 
\begin{equation*}
  \widetilde{\T}_{\kappa,w} := \big\{(\tilde{\theta},\tilde{v}) \in \T[\rho] : \dist(\tilde{\theta},\kappa) \lesssim \rho^{-1/2} \textrm{ and } |\tilde{v} - w| \lesssim r^{1/2} \big\};  
\end{equation*}
see \cite[Lemma 7.3]{Guth2018}. The sets $\widetilde{\T}_{\kappa,w}$ form a finitely-overlapping cover of $\T[\rho]$ as $(\kappa,w)$ varies over $\cT$.
\item If $(\theta,v) \in \T_{\kappa,w}$ and $(\tilde{\theta},\tilde{v}) \in \widetilde{\T}_{\kappa,w}$ for some $(\kappa,w) \in \cT$, then 
\begin{equation}\label{trans eq 3}
    \dist(\theta,\tilde{\theta}) \lesssim \rho^{-1/2} \quad \textrm{and} \quad \dist\big(T_{\tilde{\theta},\tilde{v}}(y),\, T_{\theta,v} \cap B(y,\rho)\big) \lesssim r^{1/2+\delta}.
\end{equation}
Indeed, by the definition of $\T_{\kappa,w}$ and $\widetilde{\T}_{\kappa,w}$, it follows directly that 
\begin{equation}\label{trans eq 4}
    \dist(\theta,\tilde{\theta}) \lesssim \rho^{-1/2} \quad \textrm{and} \quad |v + (\partial_{\omega}\phi)(y;\omega_{\theta}) - \tilde{v}| \lesssim r^{1/2}.
\end{equation}
This immediately establishes the first inequality in \eqref{trans eq 3} and, in fact, the second inequality in \eqref{trans eq 3} also follows from \eqref{trans eq 4}; see \cite[Lemma 7.2]{Guth2018}. 
\end{itemize}
Furthermore, the following \textit{transverse equidistribution estimate} holds for functions simultaneously concentrated on wave packets from $\T_{\bZ}$ and some $\T_{\kappa, w}$.

\begin{lemma}[{\cite[Lemma 7.5]{Guth2018}}]\label{aux trans eq lem} Suppose $h \in L^1(B^{n-1})$ is concentrated on wave packets from $\T_{\bZ} \cap \T_{\kappa, w}$ for some $(\kappa,w) \in \cT$ and $|b| \lesssim r^{1/2+\delta_j}$. Then,
\begin{equation*}
    \|\tilde{h}_b\|_2^2 \lesssim r^{O(\delta_j)} (r/\rho)^{-j/2} \|h\|_2^2. 
\end{equation*}
\end{lemma}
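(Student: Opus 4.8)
Since the statement is quoted verbatim from \cite[Lemma 7.5]{Guth2018}, the quickest route is to invoke it; for orientation I sketch the underlying \emph{transverse equidistribution} argument, which is what a self-contained proof would reconstruct. The first step would be to recentre by setting $\tilde h(\omega) := e^{i\phi(y;\omega)}h(\omega)$, so that $E\tilde h(\tilde x) = Eh(y + \tilde x)$, and to record two features of the frequency support of $E\tilde h$. First, since every wave packet of $h$ has direction lying in the single cap $\kappa$ of radius $\sim\rho^{-1/2}$, the function $E\tilde h$ has frequency support (on the paraboloid) inside $\kappa$. Second, and this is the crux, since every wave packet of $h$ is $r^{-1/2+\delta_j}$-tangent to $\bZ$ along its full length, the directions $\{\omega_\theta\}$ of these wave packets lie in a $Cr^{-1/2+\delta_j}$-neighbourhood of an $(n-1-j)$-dimensional affine subspace $W\subseteq\R^{n-1}$; here one uses that $\bZ$ has degree $O(1)$ to linearise the tangent spaces $T_z\bZ$ (equivalently, their Gauss images) along a tube, and that $\rho^{-1}\le r^{-1/2}$ to absorb the curvature of the paraboloid across $\kappa$. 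Combining, $E\tilde h$ has frequency support in a slab of width $\sim\rho^{-1/2}$ in the $n-1-j$ directions spanned by $W$ and width $\lesssim r^{-1/2+\delta_j}$ in the remaining $j$ directions; since a wave packet with frequency cap centred at $\omega$ points in the direction normal to the paraboloid over $\omega$, these $j$ thin directions are precisely those transverse to the tangent plane $V:=T_{z_0}\bZ$ of $\bZ$ at a point $z_0$ relevant to the wave packets of $h$.

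The second step would convert this frequency localisation into spatial regularity. After a linear change of variables flattening $V$ to a coordinate subspace, $E\tilde h$ has Fourier support in a slab of thickness $\mu := r^{-1/2+\delta_j}$ in the $j$ directions transverse to $V$; by the uncertainty principle (i.e.\ a routine integration by parts) $E\tilde h$ is therefore, up to an error $\mathrm{RapDec}(r)\|h\|_2$, essentially constant on $\mu^{-1}=r^{1/2-\delta_j}$-balls in those $j$ transverse directions throughout $B(y,\rho)$.

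The third step is an $L^2$ count. By the local $L^2$ estimate at scale $\rho$ (Plancherel for the scale-$\rho$ wave packets), $\|h\|_2^2 = \|\tilde h\|_2^2$ and $\|\tilde h_b\|_2^2$ are comparable, up to one \emph{common} power of $\rho$, to $\|E\tilde h\|_{L^2(B(y,C\rho))}^2$ and to $\|E\tilde h_b\|_{L^2(\R^n)}^2$; moreover, since $\tilde h_b$ is the part of $\tilde h$ concentrated on scale-$\rho$ wave packets whose tubes lie in $N_{\rho^{1/2+\delta_j}}(\bZ+b)\cap B(y,\rho)$, a quasi-orthogonality argument (lumping together the $O_\varepsilon(1)$-overlapping translates $\bZ+b'$, $|b-b'|\lesssim\rho^{1/2+\delta_j}$, as in \cite[\S7]{Guth2018}) bounds $\|E\tilde h_b\|_{L^2(\R^n)}^2$ by $r^{O(\delta_j)}\|E\tilde h\|_{L^2(N_{C\rho^{1/2+\delta_j}}(\bZ+b)\cap B(y,\rho))}^2 + \mathrm{RapDec}(r)\|h\|_2^2$. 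The decisive input is then the inequality
\begin{equation*}
\|E\tilde h\|_{L^2(N_{C\rho^{1/2+\delta_j}}(\bZ+b)\cap B(y,\rho))}^2 \lesssim \big(\rho^{1/2+\delta_j}r^{-1/2+\delta_j}\big)^{j}\,\|E\tilde h\|_{L^2(B(y,C\rho))}^2 ,
\end{equation*}
which follows from the slow variation of the second step: in the flattened coordinates the transverse slice $N_{C\rho^{1/2+\delta_j}}(\bZ+b)$ sits inside a single $r^{1/2-\delta_j}$-ball of constancy of $E\tilde h$, on which it occupies a fraction $\big(\rho^{1/2+\delta_j}r^{-1/2+\delta_j}\big)^{j}\le r^{O(\delta_j)}(r/\rho)^{-j/2}$ in those $j$ directions. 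Dividing by the common power of $\rho$, these estimates combine to $\|\tilde h_b\|_2^2 \lesssim r^{O(\delta_j)}(r/\rho)^{-j/2}\|h\|_2^2$, as claimed. (For the narrow range of $\rho$ close to $r$ where $N_{C\rho^{1/2+\delta_j}}(\bZ+b)$ no longer fits inside one ball of constancy, the asserted bound is already trivial since then $r^{O(\delta_j)}(r/\rho)^{-j/2}\gtrsim 1 \ge \|\tilde h_b\|_2^2/\|h\|_2^2$.)

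The step I expect to be the main obstacle is the geometric input of the first paragraph: that tangency along a length-$r$ tube, together with bounded degree, confines the tube directions inside the cap $\kappa$ to a thin neighbourhood of an \emph{affine} subspace of dimension $n-1-j$. The danger is that $\bZ$ could bend at second order across a length-$\rho$ portion of a tube by as much as $\sim\rho^2$, defeating any linearisation; the resolution is that $h$'s wave packets are tangent to $\bZ$ in the \emph{entire} ball $B(0,r)$, so $\bZ$ stays within $r^{1/2+\delta_j}$ of each tube axis over its whole length, and the bounded degree then controls the variation of the tangent plane (and of its Gauss image), exactly as carried out in \cite[\S 7]{Guth2018}. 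The remaining ingredients---the integration by parts of the second step and the quasi-orthogonality bookkeeping---are routine.
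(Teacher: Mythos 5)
Your primary move---importing the statement wholesale as \cite[Lemma 7.5]{Guth2018}---is exactly what the paper does: it states the lemma as a citation and offers no independent proof, using it only as an ingredient in the proof of Lemma~\ref{trans eq lem}. Your supplementary sketch of the transverse equidistribution argument (direction confinement near a codimension-$j$ tangent plane via tangency and bounded degree, frequency localisation in a slab, local constancy at scale $r^{1/2-\delta_j}$ transverse to $\bZ$, and the resulting $(\rho^{1/2+\delta_j}/r^{1/2-\delta_j})^{j}\leq r^{O(\delta_j)}(r/\rho)^{-j/2}$ gain) faithfully follows \cite[\S 7]{Guth2018}, so there is nothing to flag.
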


The auxillary estimate from Lemma~\ref{aux trans eq lem} may be combined with the various properties of the functions $\tilde{g}_{\kappa,w}$ described above in order to establish Lemma~\ref{trans eq lem}. 

\begin{proof}[Proof (of Lemma~\ref{trans eq lem})] Since $g \mapsto \tilde{g}|_{\widetilde{\W}}$ is a linear operation on $L^1(\R^{n-1})$, it follows from \eqref{trans eq 1} that
\begin{equation}\label{trans eq 5}
\tilde{g}|_{\widetilde{\W}} = \sum_{(\kappa, w) \in \cT} (g_{\kappa, w})\;\widetilde{}\;|_{\widetilde{\W}} + \mathrm{RapDec}(r)\|g\|_2.
\end{equation} 
Each $(g_{\kappa, w})\;\widetilde{}\;$ is concentrated on scale $\rho$ wave packets belonging to $\widetilde{\T}_{\kappa, w}$ and, consequently,
\begin{equation}\label{trans eq 6}
    (g_{\kappa, w})\;\widetilde{}\;|_{\widetilde{\W}} = \sum_{(\tilde{\theta},\tilde{v}) \in \widetilde{\T}_{\kappa, w} \cap \widetilde{\W}} (g_{\kappa, w})\;\widetilde{}_{\!\!\tilde{\theta}, \tilde{v}} + \mathrm{RapDec}(r)\|g\|_2. 
\end{equation}
Combining \eqref{trans eq 5} and \eqref{trans eq 6}, one deduced that
\begin{equation*}
    \tilde{g}|_{\widetilde{\W}} = \sum_{\substack{(\kappa, w) \in \cT \\ \widetilde{\T}_{\kappa, w} \cap \widetilde{\W} \neq \emptyset}} \sum_{(\tilde{\theta},\tilde{v}) \in \widetilde{\T}_{\kappa, w} \cap \widetilde{\W}} (g_{\kappa, w})\;\widetilde{}_{\!\!\tilde{\theta}, \tilde{v}} + \mathrm{RapDec}(r)\|g\|_2
\end{equation*}
and thus, since the $\widetilde{\T}_{\kappa, w}$ are finite-overlapping, by the orthogonality between the wave packets,
\begin{equation}\label{trans eq 7}
    \|\tilde{g}|_{\widetilde{\W}}\|_2^2 \lesssim \sum_{\substack{(\kappa, w)  \in \cT \\ \widetilde{\T}_{\kappa, w} \cap \widetilde{\W} \neq \emptyset}} \|(g_{\kappa, w})\;\widetilde{}\;|_{\widetilde{\W}} \|_2^2 + \mathrm{RapDec}(r)\|g\|_2^2.
\end{equation}
As $\widetilde{\W} \subseteq \widetilde{\T}_{\bZ+b}$, again using the orthogonality between the wave packets
\begin{equation}\label{trans eq 8}
    \|(g_{\kappa, w})\;\widetilde{}\;|_{\widetilde{\W}} \|_2^2 \lesssim \|(g_{\kappa, w})\;\widetilde{}_{\!\!b}\,\|_2^2.
\end{equation}
Since $g_{\kappa, w}$ is concentrated on wave packets belonging to $\T_{\bZ} \cap \T_{\kappa, w}$, one may apply Lemma~\ref{aux trans eq lem} to conclude that
\begin{equation}\label{trans eq 9}
    \|(g_{\kappa, w})\;\widetilde{}_{\!\!b}\,\|_2^2 \lesssim r^{O(\delta_j)}(r/\rho)^{-j/2} \|g_{\kappa, w}\|_2^2. 
\end{equation}
Combining \eqref{trans eq 7}, \eqref{trans eq 8} and \eqref{trans eq 9} together with the second orthogonality relation in \eqref{trans eq 2}, one obtains
\begin{equation*}
    \|\tilde{g}|_{\widetilde{\W}}\|_2^2 \lesssim \sum_{\substack{(\kappa, w)  \in \cT \\ \widetilde{\T}_{\kappa, w} \cap \widetilde{\W} \neq \emptyset}} \sum_{(\theta,v) \in \T_{\kappa, w}} \|g_{\theta,v}\|_2^2.
\end{equation*}
Thus, by yet another application of the orthogonality property, the problem is reduced to showing that
\begin{equation}\label{trans eq 10}
    \bigcup_{\substack{(\kappa, w)  \in \cT \\ \widetilde{\T}_{\kappa, w} \cap \widetilde{\W} \neq \emptyset}} \T_{\kappa, w} \subseteq \uparrow \! \widetilde{\W}. 
\end{equation}
This last step follows from \eqref{trans eq 3}. Indeed, suppose $(\theta,v) \in \T[r]$ belongs to the left-hand set in \eqref{trans eq 10} so that there exists some $(\kappa, w) \in \cT$ such that $(\theta,v) \in \T_{\kappa, w}$ and $\widetilde{\T}_{\kappa, w} \cap \widetilde{\W} \neq \emptyset$. If $(\tilde{\theta},\tilde{v}) \in \widetilde{\T}_{\kappa, w} \cap \widetilde{\W}$, then $(\tilde{\theta},\tilde{v}) \in \widetilde{\W}$ satisfies \eqref{trans eq 3}, and therefore $(\theta,v) \in \uparrow \! \widetilde{\W}$ by the definition of the latter set. 
\end{proof}




\subsection{Repeatedly refining the wave packets}\label{repeat refine sec} 

This subsection deals with a technical lemma which is useful when one wishes to repeatedly form refinements of the wave packet decomposition at a given scale. 

Given $\W \subseteq \T[r]$ let $\W^*\subseteq \T[r]$ denote the slightly enlarged set of wave packets
\begin{equation*}
    \W^* := \big\{ (\theta,v) \in \T[r] : \dist(\theta,\tilde{\theta}) \lesssim r^{-1/2} \textrm{ and } |v - \tilde{v}| \lesssim r^{1/2+\delta} \textrm{ for some $(\tilde{\theta},\tilde{v}) \in \W$}\big\}. \end{equation*}

\begin{lemma}\label{repeat refine lem} If $\W_1, \W_2 \subseteq \T[r]$ and $g \in L^1(B^{n-1})$, then
\begin{equation*}
    \big\|\big(g|_{\W_1}\big)|_{\W_2}\big\|_2 \lesssim \big\|g|_{\W_1 \cap \W_2^*}\big\|_2 + \mathrm{RapDec}(r)\|g\|_2.
\end{equation*}
\end{lemma}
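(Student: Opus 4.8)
\textbf{Proof plan for Lemma~\ref{repeat refine lem}.} The plan is to expand both restricted sums into wave packets and use the integration-by-parts/almost-orthogonality principle that underlies the wave packet decomposition. First I would write
\begin{equation*}
    \big(g|_{\W_1}\big)|_{\W_2} = \sum_{(\theta_2,v_2) \in \W_2} \big(g|_{\W_1}\big)_{\theta_2,v_2} = \sum_{(\theta_2,v_2) \in \W_2} \sum_{(\theta_1,v_1) \in \W_1} \big(g_{\theta_1,v_1}\big)_{\theta_2,v_2},
\end{equation*}
using the linearity of $f \mapsto f_{\theta,v}$ together with \eqref{wave packet decomposition} and the $\mathrm{RapDec}(r)\|g\|_2$ error it carries. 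The key point — which is the analogue at a \emph{single} scale of the integration-by-parts argument invoked in the proof of Lemma~\ref{W concentration lem} (cf.\ \cite[Lemma 7.1]{Guth2018}) — is that $\big(g_{\theta_1,v_1}\big)_{\theta_2,v_2}$ is $\mathrm{RapDec}(r)\|g\|_2$ unless $\dist(\theta_1,\theta_2) \lesssim r^{-1/2}$ and $|v_1 - v_2| \lesssim r^{1/2+\delta}$. Both $\psi_{\theta_1}$ and $\psi_{\theta_2}$ are bump functions adapted to $r^{-1/2}$-balls, so if $\theta_1$ and $\theta_2$ are not comparable the product $\psi_{\theta_1}\tilde\psi_{\theta_1}\psi_{\theta_2}\tilde\psi_{\theta_2}$ vanishes outright; if they are comparable in angle but $|v_1 - v_2|$ is large, the phase $e^{i\langle v_1 - v_2, \omega\rangle}$ oscillates on the $r^{-1/2}$-scale support against a smooth amplitude, and repeated integration by parts in $\omega$ gains an arbitrary power of $(r^{-1/2}|v_1 - v_2|)^{-1}$, hence $\mathrm{RapDec}(r)\|g\|_2$ once $|v_1 - v_2| \gtrsim r^{1/2 + \delta}$.

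Granting this, only pairs with $(\theta_1,v_1)$ satisfying $\dist(\theta_1,\theta_2) \lesssim r^{-1/2}$ and $|v_1 - v_2| \lesssim r^{1/2+\delta}$ for some $(\theta_2,v_2) \in \W_2$ contribute non-negligibly; by the definition of $\W_2^*$ this forces $(\theta_1,v_1) \in \W_2^*$. Therefore
\begin{equation*}
    \big(g|_{\W_1}\big)|_{\W_2} = \sum_{(\theta_2,v_2) \in \W_2} \sum_{(\theta_1,v_1) \in \W_1 \cap \W_2^*} \big(g_{\theta_1,v_1}\big)_{\theta_2,v_2} + \mathrm{RapDec}(r)\|g\|_2 = \big(g|_{\W_1 \cap \W_2^*}\big)|_{\W_2} + \mathrm{RapDec}(r)\|g\|_2,
\end{equation*}
where in the last step I resum the inner wave packet decomposition back up (again at the cost of a $\mathrm{RapDec}(r)\|g\|_2$ error from \eqref{wave packet decomposition}). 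Finally, $f \mapsto f|_{\W_2}$ is bounded on $L^2$ up to the $\mathrm{RapDec}(r)$ error by Lemma~\ref{W concentration lem}'s orthogonality principle (or simply by the orthogonality property of the wave packets and Lemma~\ref{W concentration lem} with $\rho = r$), so $\big\|\big(g|_{\W_1 \cap \W_2^*}\big)|_{\W_2}\big\|_2 \lesssim \big\|g|_{\W_1 \cap \W_2^*}\big\|_2 + \mathrm{RapDec}(r)\|g\|_2$, and one also absorbs $\|g|_{\W_1 \cap \W_2^*}\|_2 \lesssim \|g\|_2$ into the error constants where needed. This gives the claim.

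\textbf{Main obstacle.} The only real content is the single-scale decay statement for $\big(g_{\theta_1,v_1}\big)_{\theta_2,v_2}$, and in particular tracking that the spatial-translation parameter (here the $v$'s, which record tube position) must match to within $r^{1/2+\delta}$ — exactly the threshold appearing in the definition of $\W^*$, and not the naive $r^{1/2}$. This is where the $r^\delta$ room is spent: the stationary point of the relevant phase can drift by $r^{1/2}$ and the rapid decay only kicks in beyond $r^{1/2+\delta}$. Everything else — the vanishing of incompatible angular bumps, the resummations, and the $L^2$ bound on restricting wave packets — is a routine bookkeeping exercise already carried out in \cite[\S7]{Guth2018} and recalled above, so I would cite \cite[Lemmas 7.1--7.2]{Guth2018} rather than reprove it.
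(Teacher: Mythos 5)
Your proposal is correct and follows essentially the same route as the paper: both hinge on the single-scale rapid-decay statement (Guth's Lemma 7.1) forcing $\dist(\theta_1,\theta_2)\lesssim r^{-1/2}$ and $|v_1-v_2|\lesssim r^{1/2+\delta}$, i.e.\ $(\theta_1,v_1)\in\W_2^*$, and then conclude by $L^2$-orthogonality. The only cosmetic difference is bookkeeping: the paper applies Cauchy--Schwarz using $\#\T_{\tilde\theta,\tilde v}=O(1)$ and orthogonality twice on the squared norm, whereas you resum to $\big(g|_{\W_1\cap\W_2^*}\big)|_{\W_2}$ and invoke the $L^2$-boundedness of wave-packet restriction, which amounts to the same thing.
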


\begin{proof} Fix $(\tilde{\theta},\tilde{v}) \in \W_2$ and note that
\begin{equation}\label{repeat lem 1}
    \big(g|_{\W_1}\big)_{\tilde{\theta},\tilde{v}} = \sum_{(\theta,v) \in \W_1} \big(g_{\theta,v}\big)_{\tilde{\theta},\tilde{v}}.
\end{equation}
As in the proof of Lemma~\ref{W concentration lem}, the function $\big(g_{\theta,v}\big)_{\tilde{\theta},\tilde{v}}$ is rapidly decaying whenever $(\tilde{\theta},\tilde{v}) \notin \T_{\theta,v}$ where
\begin{equation*}
    \T_{\theta,v} := \big\{ (\tilde{\theta},\tilde{v}) \in \T[r] : \dist(\theta,\tilde{\theta}) \lesssim r^{-1/2} \textrm{ and } |v - \tilde{v}| \lesssim r^{1/2+\delta}  \big\};
\end{equation*}
see, for instance, \cite[Lemma 7.1]{Guth2018}. The condition $(\tilde{\theta},\tilde{v}) \notin \T_{\theta,v}$ is equivalent to $(\theta,v) \notin \T_{\tilde{\theta},\tilde{v}}$  and so \eqref{repeat lem 1} implies that
\begin{equation*}
 \big(g|_{\W_1}\big)_{\W_2} = \sum_{(\tilde{\theta},\tilde{v}) \in \W_2}   \sum_{(\theta,v) \in \W_1 \cap \T_{\tilde{\theta},\tilde{v}}} \big(g_{\theta,v}\big)_{\tilde{\theta},\tilde{v}} + \mathrm{RapDec}(r)\|g\|_2. 
\end{equation*}
Since $\# \T_{\tilde{\theta},\tilde{v}} = O(1)$, one may apply $L^2$-othogonality together with the Cauchy--Schwarz inequality to deduce that the inequalities
\begin{align*}
  \big\|\big(g|_{\W_1}\big)_{\W_2}\big\|_2^2 \lesssim \sum_{(\tilde{\theta},\tilde{v}) \in \W_2}   \sum_{(\theta,v) \in \W_1 \cap \T_{\tilde{\theta},\tilde{v}}} \big\|\big(g_{\theta,v}\big)_{\tilde{\theta},\tilde{v}}\big\|_2^2 \\
  \lesssim  \sum_{(\theta,v) \in \W_1 \cap \W_2^*} \sum_{(\tilde{\theta},\tilde{v}) \in \T[r]}\big\|\big(g_{\theta,v}\big)_{\tilde{\theta},\tilde{v}}\big\|_2^2 
\end{align*}
hold up to the inclusion of a rapidly decaying error. Further application of $L^2$-orthogonality then yields the desired estimate. 
\end{proof}




\section{Relating the scales: verifying Property iv)}\label{relating scales sec}

\subsection{The first algorithm}  Throughout this section let $p \geq 2$ be fixed and
\begin{equation*}
    \varepsilon^{C} \leq \delta \ll_{\varepsilon} \delta_0 \ll_{\varepsilon} \delta_1 \ll_{\varepsilon} \dots \ll_{\varepsilon} \delta_{n-k} \ll_{\varepsilon} \varepsilon_{\circ} \ll_{\varepsilon} \varepsilon
\end{equation*}
be the family of small parameters described in \S\ref{notation sec}. It will be useful to also work with auxiliary numbers $\tilde{\delta}_j$ defined by
\begin{equation*}
    \big(1 - \tilde{\delta}_{j+1} \big) \big( \tfrac{1}{2} + \delta_{j+1}\big) = \tfrac{1}{2} + \delta_j, 
\end{equation*}
so that $\delta_j/2 \leq \tilde{\delta}_j \leq 2 \delta_j$ for all $0 \leq j \leq n - k$. 
\medskip

\noindent\underline{\texttt{Input}} The algorithm \texttt{[alg 1$^*$]} takes as its input:
\begin{itemize}
    \item A grain $\big(\bZ, B(y,r)\big)$ of codimension $m$.
    \item A function $f \in L^1(B^{n-1})$ which is tangent to $\big(\bZ, B(y,r)\big)$.
    \item An admissible large integer $A \in \N$. 
\end{itemize}

\medskip
%
%
%
\noindent\underline{\texttt{Output}} The $j$th stage of \texttt{[alg 1$^*$]} outputs:
\begin{itemize}
    \item A choice of spatial scale $\rho_j \geq 1$ satisfying $\rho_j \leq \rho_{j-1}/2$ and
\begin{equation*}\label{radius bounds}
\rho_j \leq r^{(1-\tilde{\delta}_{m+1})^{\#_{\sta}(j)}}   \quad \textrm{and} \quad \rho_j \leq \frac{r}{2^{\#_{\stc}(j)}}  
\end{equation*}
  for certain integers $\#_{\bta}(j), \#_{\btc}(j) \in \N_0$ satisfying $\#_{\bta}(j) + \#_{\btc}(j) = j$. 
  \item A family of subsets $\O_j$ of $\R^n$ referred to as \emph{cells}. Each cell $O_j \in \O_j$ is contained in some $\rho_j$-ball $B_{O_j} = B(y_{O_j}, \rho_j)$. 
\item A collection of functions $(f_{O_j})_{O_j \in \O_j}$. For each cell $O_j$ there is a translate $\bZ_{O_j} := \bZ + x_{O_j}$ such that $f_{O_j}$ is tangent to the grain $(\bZ_{O_j},B_{O_j})$. 
  \item A large integer $d \in \N$ which depends only on the admissible parameters and $\deg \bZ$.
  \end{itemize}

Moreover, the components of the ensemble are defined so as to ensure that, for certain coefficients
\begin{equation}\label{alg 1 coeff}
   C^{\mathrm{I}}_{j,\delta}(d,r), \; C^{\mathrm{II}}_{j,\delta}(d), \; C^{\mathrm{III}}_{j,\delta}(d,r), \; C^{\mathrm{IV}}_{j,\delta}(d,r) \lesssim_{d,\delta} r^{\varepsilon_{\circ}} d^{\#_{\stc}(j)\delta}
\end{equation}
and $A_j := 2^{-\#_{\sta}(j)}A \in \N$, the following properties hold:\medskip

\paragraph{\underline{Property I}} Most of the mass of $\|Ef\|_{\BL{k,A}^p(B_{r})}^p$ is concentrated over the $O_j \in \O_j$:
\begin{equation} \tag*{$(\mathrm{I})_j$}
      \|Ef\|_{\BL{k,A}^p(B_{r})}^p \leq C^{\mathrm{I}}_{j,\delta}(d,r) \sum_{O_{j} \in \O_{j}} \|Ef_{O_j}\|_{\BL{k,A_j}^p(O_{j})}^p + j r^{-N} \|f\|_{L^2(B^{n-1})}^p
\end{equation}
for some large fixed $N \in \N$.\\

\paragraph{\underline{Property II}} The functions $f_{O_j}$ satisfy
\begin{equation}\tag*{$(\mathrm{II})_j$}
    \sum_{O_{j} \in \O_{j}} \|f_{O_j}\|_2^2 \leq C^{\mathrm{II}}_{j,\delta}(d)d^{\#_{\stc}(j)}  \|f\|_2^2.
\end{equation}
\paragraph{\underline{Property III}} Each $f_{O_j}$ satisfies
\begin{equation}\tag*{$(\mathrm{III})_j$}
    \|f_{O_{j}}\|_{L^2(B^{n-1})}^2 \leq C_{j,\delta}^{\mathrm{III}}(d, r) \Big(\frac{r}{\rho_{j}}\Big)^{-m/2} d^{-\#_{\stc}(j)(n-m-1)}  \|f\|_{L^2(B^{n-1})}^2.
\end{equation}

Properties I, II and III are stated explicitly in the description of \texttt{[alg 1]} from \cite[\S9]{HR2019}. The modified algorithm \texttt{[alg 1$^*$]} includes an additional property, described presently.

For $\W \subseteq \T[\rho_j]$ let $\uparrow^j \W$ denote the set of wave packets $(\theta,v) \in \T[r]$ satisfying 
\begin{equation*}
    \dist(\theta,\theta_j) \leq c_j \rho_j^{-1/2} \quad \textrm{and} \quad \dist\big(T_{\theta_j,v_j}(y_{O_j}),\, T_{\theta,v}(y) \cap B_{O_j}\big) \leq c_j r^{1/2 + \delta}
\end{equation*}
for some $(\theta_j, v_j) \in \W$. Here $(c_j)_{j=0}^{\infty}$ is positive sequence which is bounded above by an absolute constant $C_{\circ}$ and chosen so as to satisfy the forthcoming requirements of the argument. Furthermore, let
\begin{equation*}
    f_{O_j}|_{\W} := \sum_{(\theta_j,v_j) \in \W} (f_{O_j})|_{\theta_j,v_j} \qquad \textrm{and} \qquad f|_{\uparrow^j \W} := \sum_{\theta,v \in \uparrow^j \W} f_{\theta,v}.
\end{equation*}

 \paragraph{\underline{Property IV}} For any $\W \subseteq \T[\rho_j]$, each $f_{O_j}$ satisfies
\begin{equation}\tag*{$(\mathrm{IV})_j$}
 \|f_{O_{j}}|_{\W}\|_{2}^2 \leq C_{j,\delta}^{\mathrm{IV}}(d, r) \Big(\frac{r}{\rho_{j}}\Big)^{-m/2}\|f|_{\uparrow^j\W}\|_{2}^2.
\end{equation}

This concludes the description of the output of \texttt{[alg 1$^*$]}.\medskip
%
%
\paragraph{\underline{\texttt{Stopping conditions}}} The algorithm has two stopping conditions which are labelled \texttt{[tiny]} and \texttt{[tang$^*$]}. 
\begin{itemize}
\item[\texttt{Stop:[tiny]}] The algorithm terminates if $\rho_j \leq r^{\tilde{\delta}_{m+1}}$.
\end{itemize}
In view of the additional Property IV above, the second stopping condition is slightly modified compared with that of \texttt{[alg 1]} of \cite[\S9]{HR2019}.
\begin{itemize}
\item[\texttt{Stop:[tang$^*$]}]Let $C_{\textrm{\texttt{tang}}}$ and $C_{\alg}$ be large, fixed dimensional constants and $\tilde{\rho} := \rho_j^{1 - \tilde{\delta}_m}$. The algorithm terminates if there exist
\end{itemize}
\begin{itemize}
    \item $\Sc$ a collection of grains $(S,B_{\tilde{\rho}})$ of codimension $m+1$, scale $\tilde{\rho}$ and degree at most $C_{\alg}d$;
    \item An assignment of a function $f_S$ to each\footnote{Here, by an abuse of notation, $S$ is used to denote the grain $(S,B_{\tilde{\rho}})$.} $S \in \Sc$ which is tangent to $(S,B_{\tilde{\rho}})$
\end{itemize}
such that the following inequalities hold:\medskip

\paragraph{\underline{Condition I}}
\begin{equation*}
    \sum_{O_j \in \O_j} \|Ef_{O_j}\|_{\BL{k,A_j}^p(O_{j})}^p \leq C_{\textrm{\texttt{tang}}}  \sum_{S \in \Sc} \|Ef_S\|_{\BL{k,A_j/2}^p(B_{\tilde{\rho}})}^p.
\end{equation*}

\paragraph{\underline{Condition II}}

\begin{equation*}
   \sum_{S \in \Sc}\|f_{S}\|_{L^2(B^{n-1})}^2  \leq C_{\textrm{\texttt{tang}}}r^{n\tilde{\delta}_m}\sum_{O_j \in \O_j}\|f_{O_j}\|_{L^2(B^{n-1})}^2.
\end{equation*}

\paragraph{\underline{Condition III}}
\begin{equation*}
    \max_{S \in \Sc}\|f_{S}\|_{2}^2 \leq C_{\textrm{\texttt{tang}}}\max_{O_{j} \in \O_{j} }\|f_{O_j}\|_{2}^2.
\end{equation*}

Conditions I, II and III are stated explicitly in the description of the stopping condition for \texttt{[alg 1]} from \cite[\S9]{HR2019}. The modified algorithm \texttt{[alg 1$^*$]} includes an additional condition.\medskip

\paragraph{\underline{Condition IV}} Given $(S, B(\tilde{y},\tilde{\rho})) \in \Sc$ there exists some $O_j \in \O_j$ such that 
\begin{equation*}
    \|\tilde{f}_S|_{\W}\|_{2}^2 \leq C_{\textrm{\texttt{tang}}} \|f_{O_j}|_{\uparrow\W}\|_{2}^2
    \end{equation*}
    holds for all $\W \subseteq \T[\tilde{\rho}]$. Here $\uparrow\!\W$ is the set of all $(\theta, v) \in \T[\rho_j]$ for which there exists some $(\tilde{\theta},\tilde{v}) \in \W$ satisfying
\begin{equation*}
    \dist(\tilde{\theta},\theta) \lesssim \tilde{\rho}^{-1/2}, \quad \dist(T_{\tilde{\theta},\tilde{v}}(\tilde{y}),\, T_{\theta,v}(y_{O_j}) \cap B(\tilde{y}, \tilde{\rho})\big) \lesssim \rho_j^{1/2+\delta}
\end{equation*} 
for $y_{O_j}$ the centre of $B_{O_j}$, whilst
\begin{equation*}
\tilde{f}_S|_{\W} := \sum_{(\tilde{\theta}, \tilde{v}) \in \W} (f_S)\;\widetilde{}\!_{\tilde{\theta},\tilde{v}} \qquad \textrm{and} \qquad f_{O_j}|_{\uparrow \W} := \sum_{(\theta, v) \in \uparrow\W} (f_{O_j})_{\theta,v}.
\end{equation*}
The function $\tilde{f}_S$ is as defined in \eqref{recentre g}, taking $y$ to be the centre of $B_{\tilde{\rho}}$. 




\subsection{Ensuring Property IV} The modified algorithm \texttt{[alg 1$^*$]} is obtained by combining the recursive step from \texttt{[alg 1]} from \cite[\S9]{HR2019} with the $L^2$-orthogonality results from \S\ref{L2 orthogonality sec}. The main task is to verify Property IV holds. For this, it is useful to work with an explicit formula for the constant  
\begin{equation}\label{C IV formula}
C_{j,\delta}^{\mathrm{IV}}(d, r) := d^{j\delta} r^{\bar{C}\#_{\bta}(j)\delta_m},
\end{equation}
where $\bar{C} \geq 1$ is a suitably large absolute constant. Note that this agrees with the definition of $C_{j,\delta}^{\mathrm{III}}(d, r)$ used in \cite[\S9]{HR2019} and the stopping condition \texttt{[tiny]} together with \eqref{radius bounds} ensure \eqref{alg 1 coeff} holds in this case. 

The $f_{O_j}$ are defined recursively exactly as in \texttt{[alg 1]}. Recall that there are two cases to consider: the \textit{cellular-dominant} case and \textit{algebraic-dominant} case. For either situation the functions $f_{O_{j+1}}$ are obtained from the $f_{O_j}$ via the same procedure. In particular, supposing $\rho_{j+1}$, $O_{j+1}$, $B_{O_{j+1}}$ and $x_{O_{j+1}}$ have already been defined, the function $f_{O_{j+1}}$ has the following form: for two sets of wave packets $\T[O_{j+1}] \subseteq \T[\rho_j]$ and $\widetilde{\T}[O_{j+1}] \subseteq \T[\rho_{j+1}]$ define
\begin{equation*}
    \tilde{f}_{O_{j+1}} := \tilde{g}_{O_{j+1}} |_{\tilde{\T}[O_{j+1}]} \quad \textrm{for} \quad g_{O_{j+1}} := f_{O_j}|_{\T[O_{j+1}]}.
\end{equation*}
Here, given $g \in L^1(B^{n-1})$, the function $\tilde{g}$ is defined with respect to the ball $B_{O_{j+1}}$ as in \S\ref{comparing scales sec} (that is, taking $y = y_{O_{j+1}}$ in \eqref{recentre g}). The following table shows the choices of $\T[O_{j+1}]$ and $\widetilde{\T}[O_{j+1}]$ used in the cellular-dominant and algebraic-dominant cases in \cite[\S9]{HR2019} or \cite{Guth2018}.\medskip

\begin{center}
 \begin{tabu}{|c|[1pt]c|c|} 
 \hline \xrowht[()]{10pt}
 Case & $\T[O_{j+1}]$ & $\widetilde{\T}[O_{j+1}]$ \\ \tabucline[1pt]{-} 
 \hline \xrowht[()]{10pt}
 Cellular-dominant & $\{(\theta,v) \in \T[\rho_j] : T_{\theta,v} \cap O_{j+1} \neq \emptyset \}$ & $\widetilde{\T}_{Z_{O_{j+1}}}$ \\ 
 \hline \xrowht[()]{10pt}
 Algebraic-dominant & $\T_{B,\mathrm{trans}}$ & $\widetilde{\T}_{Z_{O_{j+1}}}$\\ 
 \hline
\end{tabu}
\end{center}
\medskip
The set $\T_{B,\mathrm{trans}}$ is as defined in \cite[p.257]{HR2019} or \cite[p.129]{Guth2018}, but the precise choice of $\T[O_{j+1}]$ set is in fact unimportant for the purpose of establishing Property IV (the information is only included here as a reference to the arguments in \cite{Guth2018, HR2019}). The sets $\widetilde{\T}_{Z_{O_{j+1}}}$ are as defined in \eqref{rho tangent} for $\rho := \rho_{j+1}$ and $y = y_{O_{j+1}}$. 

\begin{remark} The additional decomposition according to the $\widetilde{\T}_{Z_{O_{j+1}}}$ wave packets is not carried out in the cellular-dominant case in either \cite{Guth2018} or \cite{HR2019} but is nevertheless useful in the argument: see \cite[p.254, fn 11]{HR2019} or \cite[Lemma 10.2]{GHI2019} for further details. 
\end{remark}

With the general setup above, $(\mathrm{IV})_{j+1}$ may be established as follows. By a combination of Lemma~\ref{repeat refine lem} and the basic orthogonality between the wave packets,
\begin{equation*}
    \|\tilde{f}_{O_{j+1}}|_{\W}\|_2^2 \lesssim  \|\tilde{g}_{O_{j+1}}|_{\widetilde{\T}[O_{j+1}] \cap \W^*}\|_2^2 \lesssim  \|\tilde{g}_{O_{j+1}}|_{\W^*}\|_2^2 
\end{equation*}
where $\W^*$ is the enlarged version of $\W$ defined in \S\ref{repeat refine sec}. Strictly speaking, these bounds should include additional rapidly decreasing error terms but, for simplicity, here and below these minor contributions are omitted. The transverse equidistribution estimate from Lemma~\ref{trans eq lem} implies that
\begin{equation}\label{IV proof 1}
    \|\tilde{g}_{O_{j+1}}|_{\W^*}\|_2^2 \lesssim \rho_j^{O(\delta_m)} \Big(\frac{\rho_j}{\rho_{j+1}}\Big)^{-m/2} \|g_{O_{j+1}}|_{\uparrow\W^*}\|_2^2
\end{equation}
whilst a second application of Lemma~\ref{repeat refine lem} and orthogonality yields
\begin{equation*}
    \|g_{O_{j+1}}|_{\uparrow\W^*}\|_2^2  \lesssim  \|f_{O_j}|_{\T[O_{j+1}] \cap (\uparrow\W^*)^*}\|_2^2 \lesssim  \|f_{O_j}|_{(\uparrow\W^*)^*}\|_2^2.
\end{equation*}
The set $(\uparrow\W^*)^*$ agrees with a set that is almost identical to $\uparrow\W$ except that certain constants in the definition of $\uparrow\W$ are slightly enlarged. By redefining $\uparrow\W$ using these larger constants, one concludes that
\begin{equation}\label{IV proof 2}
    \|\tilde{f}_{O_{j+1}}|_{\W}\|_2^2 \lesssim \rho_j^{O(\delta_m)} \Big(\frac{\rho_j}{\rho_{j+1}}\Big)^{-m/2} \|f_{O_j}|_{\uparrow\W}\|_2^2 .
\end{equation}

The inequality \eqref{IV proof 2} is in fact only useful in the algebraic-dominant case. The estimate is true in the cellular case but is not efficient since here the ratio of the scales $\rho_j/\rho_{j+1}$ is small compared to the $\rho^{O(\delta_m)}$ factor. Instead, one may replace \eqref{IV proof 1} in the above argument with the more elementary bound 
\begin{equation*}
    \|\tilde{g}_{O_{j+1}}|_{\W^*}\|_2^2 \lesssim  \|g_{O_{j+1}}|_{\uparrow\W^*}\|_2^2,
\end{equation*}
which follows directly from \eqref{basic W L2}. Arguing in this way, one may strengthen \eqref{IV proof 2} in the cellular case to an estimate without any additional $\rho_j^{O(\delta_m)}$. 

Henceforth assume the algebraic-dominant case holds; the cellular-dominant case follows almost identically using the refined version of \eqref{IV proof 2} described in the previous paragraph. Apply $(\mathrm{IV})_j$ to the right-hand side of \eqref{IV proof 2} to deduce that
    \begin{equation}\label{IV proof 3}
    \|\tilde{f}_{O_{j+1}}|_{\W}\|_2^2 \lesssim \rho_j^{O(\delta_m)} C_{j,\delta}^{\mathrm{IV}}(d, r) \Big(\frac{r}{\rho_{j+1}}\Big)^{-m/2} \|f_{O_j}|_{\uparrow^j(\uparrow\W)}\|_2^2 .
\end{equation}
It is claimed that $\uparrow^j(\uparrow\W) \subseteq \uparrow^{j+1}\W$. Once this is established, combing \eqref{IV proof 3} with basic $L^2$-orthogonality gives 
 \begin{equation*}
    \|\tilde{f}_{O_{j+1}}|_{\W}\|_2^2 \leq C(\deg \bZ, \delta) \rho_j^{\bar{C}\delta_m} C_{j,\delta}^{\mathrm{IV}}(d, r) \Big(\frac{r}{\rho_{j+1}}\Big)^{-m/2} \|f_{O_j}|_{\uparrow^j(\uparrow\W)}\|_2^2 .
\end{equation*} 
for suitable constants $C(\deg \bZ, \delta)$, $\bar{C} \geq 1$. Finally, from the formula \eqref{C IV formula} and the assumption that the algebraic-dominant case holds,
\begin{equation*}
   C(\deg \bZ, \delta) \rho_j^{\bar{C}\delta_m} C_{j,\delta}^{\mathrm{IV}}(d, r) \leq C_{j+1,\delta}^{\mathrm{IV}}(d, r),
\end{equation*}
provided the parameter $d$ is chosen to be sufficiently large, depending only on the admissible parameters and $\deg \bZ$. To see this, recall from the description of \texttt{[alg 1]} from \cite[\S9]{HR2019} that $\#_{\bta}(j+1) = \#_{\bta}(j)+1$ if the algebraic-dominant case holds.  This concludes the proof of $(\mathrm{IV})_j$, except for establishing the inclusion $\uparrow^j(\uparrow\W) \subseteq \uparrow^{j+1}\W$.

Let $(\theta,v) \in \uparrow^j(\uparrow\W)$ so that there exists some $(\theta_j, v_j) \in \uparrow\W$ such that
\begin{equation*}
    \dist(\theta,\theta_j) \leq c_j \rho_j^{-1/2} \quad \textrm{and} \quad  \dist(T_{\theta_j,v_j}(y_{O_j}),\, T_{\theta,v}(y) \cap B_{O_j}\big) \leq c_j r^{1/2+\delta}.
\end{equation*}
On the other hand, since $(\theta_j, v_j) \in \uparrow\W$ there exists some $(\theta_{j+1}, v_{j+1}) \in \W$ such that
\begin{gather*}
    \dist(\theta_j,\theta_{j+1}) \leq C \rho_{j+1}^{-1/2}; \\
    \dist(T_{\theta_{j+1},v_{j+1}}(y_{O_{j+1}}),\, T_{\theta_j,v_j}(y_{O_j}) \cap B_{O_{j+1}}\big) \leq C \rho_j^{1/2+\delta},
\end{gather*}
for an appropriate choice of $C$. At this point, fix the values of $c_j$ to be
\begin{equation*}
    c_j :=  C\sum_{i=0}^{j-1} 2^{-i/2} \leq C_{\circ} := C\sum_{i=0}^{\infty} 2^{-i/2} .
\end{equation*}
Since $\rho_{i+1} \leq \rho_i/2$ for all $0 \leq i \leq j$, it follows from the preceding displays that
\begin{gather*}
    \dist(\theta,\theta_{j+1}) \leq \big(c_j (\rho_{j+1}/\rho_j)^{1/2} + C\big) \rho_{j+1}^{-1/2} \leq c_{j+1}\rho_{j+1}^{-1/2}; \\
   \dist(T_{\theta_{j+1},v_{j+1}}(y_{O_{j+1}}),\, T_{\theta,v}(y) \cap B_{O_{j+1}}\big) \leq \big(c_j + C2^{-j/2}\big) r^{1/2+\delta} = c_{j+1} r^{1/2+\delta}.
\end{gather*}
Thus, $(\theta,v) \in \uparrow^{j+1}\W$, as required. \hfill $\Box$




\subsection{The modified stopping condition} The condition \texttt{[tang$^*$]} in \texttt{[alg~1$^*$]} is slightly different from the corresponding condition \texttt{[tang]} appearing in \texttt{[alg 1]} and, in particular, Condition~IV must hold in order to trigger \texttt{[tang$^*$]}. To incorporate this extra condition, the algorithm described in \cite[\S9]{HR2019} requires Conditions II, III and IV to hold for certain functions $f_{B, \mathrm{tang}}$. These functions are of the form $f_S := f_{O_j}|_{\T[S]}$ for some $\T[S] \subseteq \T[\rho_j]$, similar to those encountered in the previous subsection. This time $\T[S] := \T_{B,\mathrm{tang}}$, where the latter set is as defined in \cite[p.237]{HR2019} or \cite[p.129]{Guth2018}. As before, the exact form of the set $\T[S]$ is not important for the purposes of verifying the Condition IV. Thus, one wishes to show that
\begin{equation}\label{stopping 1}
    \|\tilde{f}_{S} |_{\W} \big\|_2^2 \lesssim \|f_{O_j} |_{\uparrow \W} \big\|_2^2.
\end{equation}
This inequality is easily deduced using the arguments of the previous subsection. In particular, \eqref{basic W L2} and Lemma~\ref{repeat refine lem} together imply that
\begin{equation*}
    \big\|\big(f_{S}\big)\;\widetilde{}\; |_{\W} \big\|_2^2 \lesssim \|f_S |_{\uparrow \W}\|_2^2 \lesssim \|f_{O_j} |_{\T[S] \cap (\uparrow \W)^*}\|_2^2.
\end{equation*}
The desired estimate \eqref{stopping 1} now follows from the $L^2$-orthogonality between the wave packets, provided, as before, that the constants in the definition of $\uparrow \W$ are slightly enlarged.  



\subsection{The second algorithm: a sketch} The multigrain decomposition from \S\ref{multigrain dec sec} is obtained by repeatedly applying \texttt{[alg~1$^*$]} as part of a recursive procedure described in \texttt{[alg 2]} in \cite[\S10]{HR2019}. Here a brief sketch of this process is given. 

At stage $0$, one begins with the input of the multigrain decomposition from \S\ref{multigrain dec sec}. After the $\ell$th stage a family of functions $f_{\vec{S}_{\ell}}$ has been constructed, indexed by $\vec{S}_{\ell} \in \vec{\Sc}_{\ell}$ where $\vec{\Sc}_{\ell}$ is a collection of level $\ell$ multigrains at some scale $r_{\ell}$ and of complexity $O_{\varepsilon}(1)$. Each function $f_{\vec{S}_{\ell}}$ is tangent to $S_{\ell}$, the codimension $\ell$ grain forming the final component of $\vec{S}_{\ell}$. Furthermore, the functions satisfy suitable ``level $\ell$ variants'' of the properties \eqref{P 1} to \eqref{P 4}: see \cite[\S10]{HR2019} for details.

To pass to the next stage of the construction, apply \texttt{[alg~1$^*$]} to each function $f_{\vec{S}_{\ell}}$. Notice that these functions satisfy the tangency conditions required in the input of the algorithm. In each case, either \texttt{[alg~1$^*$]} terminates due to \texttt{[tiny]} or due to \texttt{[tang$^*$]}. Suppose that the inequality
  \begin{equation}\label{tiny case}
      \sum_{\vec{S}_{\ell} \in \vec{\Sc}_{\ell}} \|Ef_{\vec{S}_{\ell}}\|_{\BL{k,A_{\ell}}^{p_{\ell}}(B_{r_{\ell}})}^{p_{\ell}} \leq 2  \sum_{\vec{S}_{\ell} \in \vec{\Sc}_{\ell,\textrm{\texttt{tiny}}}} \|Ef_{\vec{S}_\ell}\|_{\BL{k,A_{\ell}}^{p_{\ell}}(B_{r_{\ell}})}^{p_{\ell}}
\end{equation}
holds, where the right-hand summation is restricted to those $S_{\ell} \in \vec{\Sc}_{\ell}$ for which \texttt{[alg 1$^*$]} terminates owing to the stopping condition \texttt{[tiny]}. In this case, the process terminates and $m := \ell$. Defining the functions $f_O$ appropriately via  \texttt{[alg 1$^*$]}, one may verify the properties of the multigrain decomposition from \S\ref{multigrain dec sec}: see \cite[\S\S9-10]{HR2019} and the following discussion for further details. 

Alternatively, if \eqref{tiny case} fails, then necessarily
  \begin{equation*}
      \sum_{\vec{S}_{\ell} \in \vec{\Sc}_{\ell}} \|Ef_{\vec{S}_{\ell}}\|_{\BL{k,A_{\ell}}^{p_{\ell}}(B_{r_{\ell}})}^{p_{\ell}} \leq 2 \sum_{\vec{S}_{\ell} \in \vec{\Sc}_{\ell,\textrm{\texttt{tang}}}} \|Ef_{\vec{S}_{\ell}}\|_{\BL{k,A_{\ell}}^{p_{\ell}}(B_{r_{\ell}})}^{p_{\ell}},
\end{equation*}
where the right-hand summation is restricted to those $S_{\ell} \in \vec{\Sc}_{\ell}$ for which \texttt{[alg 1$^*$]} does not terminate owing to \texttt{[tiny]} and therefore terminates owing to \texttt{[tang$^*$]}. For each $\vec{S}_{\ell} \in \vec{\Sc}_{\ell}$ there exists a collection $\Sc_{\ell+1}[\vec{S}_{\ell}]$ of codimension $\ell+1$ grains of degree $O_{\varepsilon}(1)$ which arise from \texttt{[tang$^*$]} and, in particular, satisfy the Conditions I to IV with $f$ replaced with $f_{\vec{S}_{\ell}}$. By appropriately pigeonholing, one may further assume that all the grains in  $\Sc_{\ell+1}[\vec{S}_{\ell}]$ have a common scale $r_{\ell+1} < r_{\ell}$. A family of level $\ell+1$ multigrains is then defined by
\begin{equation*}
    \vec{\Sc}_{\ell+1} := \big\{ (\vec{S}_{\ell}, S_{\ell+1}) : S_{\ell+1} \in \Sc_{\ell+1}[\vec{S}_{\ell}] \big\},
\end{equation*}
whilst $f_{\vec{S}_{\ell+1}} := (f_{\vec{S}_{\ell}})_{S_{\ell+1}}$, where the right-hand function satisfies the properties stated in \texttt{[tang$^*$]}.




\subsection{Ensuring Property iv)} The procedure sketched in the previous subsection is precisely \texttt{[alg 2]} from \cite{HR2019} (which in turn corresponds to the induction-on-dimension used in \cite{Guth2018}). The only modification required for the purposes of this article is to construct the functions $f_{\vec{S}_{\ell}}^{\#}$ described in \S\ref{multigrain dec sec} and ensure that Property iv) from \S\ref{multigrain dec sec} holds. This is achieved using the additional Property IV of \texttt{[alg 1$^*$]}. 

For $\vec{S}_{\ell} \in \vec{\Sc}_{\ell}$ and $0 \leq j \leq \ell$, if $\big(S_j, B(y_j,r_j)\big)$ denotes the codimension $j$ component of $\vec{S}_{\ell}$, then let $\T_{\mathrm{tang}}[S_j]$ denote the set of all scale $r_j$ wave packets which are $r_j^{-1/2+\delta_j}$-tangent to $S_j$ in $B(y_j, r_j)$. Thus, if $\vec{S}_{\ell} \preceq \vec{S}_j$ for some $0 \leq j \leq \ell$, then the function $f_{\vec{S}_j}$ is concentrated on wave packets belonging to $\T_{\mathrm{tang}}[S_j]$.

For $1 \leq \ell \leq m$, given $\W \subseteq \T[r_{\ell}]$, let $\uparrow\uparrow_{\ell} \!\!\W$ denote the set of wave packets $(\theta, v) \in \T[r_{\ell-1}]$ for which there exists some $(\tilde{\theta}, \tilde{v}) \in \W$ satisfying
\begin{equation*}
    \dist(\tilde{\theta},\theta) \leq C_{\circ} r_{\ell}^{-1/2} \quad \textrm{and} \quad \dist\big(T_{\tilde{\theta},\tilde{v}}(y_{\ell}),\, T_{\theta,v}(y_{\ell-1}) \cap B(y_{\ell},r_{\ell})\big) \leq C_{\circ} r_{\ell-1}^{-1/2+\delta}.
\end{equation*}
Property IV of \texttt{[alg 1$^*$]} and the definition of the stopping condition \texttt{[tang$^*$]} together imply that for $1 \leq \ell \leq m$ and $\W \subseteq \T[r_{\ell}]$, the inequality 
\begin{equation}\label{alg 2 1} \big\|f_{\vec{S}_{\ell}}|_{\W}\big\|_2^2 \lesssim_{\varepsilon} \Big(\frac{r_{\ell-1}}{r_{\ell}}\Big)^{-(\ell-1)/2}D_{\ell}^{\delta}   R^{O(\varepsilon_{\circ})}\big\|f_{\vec{S}_{\ell-1}}|_{\uparrow\uparrow_{\ell} \W}\big\|_2^2 
\end{equation}
holds whenever $\vec{S}_{\ell-1} \in \vec{\Sc}_{\ell-1}$, $\vec{S}_{\ell} \in \vec{\Sc}_{\ell}$ and $\vec{S}_{\ell} \preceq \vec{S}_{\ell-1}$.

 Construct a sequence of sets $\W_j \subseteq \T_{\mathrm{tang}}[S_j]$ for $0 \leq j \leq \ell$ recursively as follows:
\begin{itemize}
    \item Set $\W_{\ell} := \T_{\mathrm{tang}}[S_{\ell}]$.
    \item Assuming $\W_t, \dots, \W_{\ell}$ have already been constructed for some $1 \leq t \leq \ell$, define
    \begin{equation*}
        \W_{t-1} := \T_{\mathrm{tang}}[S_{t-1}] \cap \big(\!\uparrow\uparrow_t\! \! \W_t\big)^*.
    \end{equation*}
\end{itemize}
For each $0 \leq t \leq \ell$, the tubes belonging to $\W_t$ satisfy a version of the hypothesis from Definition~\ref{nested tube condition}. In particular, each $(\theta_t,v_t) \in \W_t$ satisfies the following:\medskip

\noindent \textbf{Nested tube hypothesis.} There exist $(\theta_i,v_i) \in \T[r_i]$ for $t +1  \leq i \leq \ell$ such that
    \begin{enumerate}[i)]
    \item $\dist(\theta_i,\theta_j) \lesssim r_j^{-1/2}$,
    \item $\dist\big(T_{\theta_j,v_j}(y_j), \, T_{\theta_i,v_i}(y_i) \cap B(y_j,r_j)\big) \lesssim r_i^{1/2+\delta}$,
    \item $T_{\theta_j,v_j}(y_j) \subset N_{r_j^{1/2+\delta_j}}S_j$
\end{enumerate}
hold for all $t \leq i \leq j \leq \ell$.\medskip

Furthermore, for each $0 \leq t \leq \ell$ the inequality 
\begin{equation}\label{alg 2 2}
    \|f_{\vec{S}_{\ell}}\|_2^2 \lesssim \prod_{j=t+1}^{\ell} \Big(\frac{r_{j-1}}{r_j}\Big)^{-\frac{j-1}{2}}D_j^{\delta}  \|f_{\vec{S}_t} |_{\W_t}\|_2^2
\end{equation}
holds up to the inclusion of a rapidly decaying error term. Indeed, by \eqref{alg 2 1} above,
\begin{equation*}
    \|f_{\vec{S}_j} |_{\W_j}\|_2^2 \lesssim_{\varepsilon} \Big(\frac{r_{j-1}}{r_j}\Big)^{-\frac{j-1}{2}}D_j^{\delta}   R^{O(\varepsilon_{\circ})}\big\|f_{\vec{S}_{j-1}}|_{\uparrow_j\W_j}\big\|_2^2.
\end{equation*}
Since $f_{\vec{S}_{j-1}}$ is concentrated on wave packets belonging to $\T_{\mathrm{tang}}[S_{j-1}]$, one has
\begin{equation*}
    f_{\vec{S}_{j-1}} = f_{\vec{S}_{j-1}}|_{\T_{\mathrm{tang}}[S_{j-1}]} + \mathrm{RapDec}(r)\|f\|_2.
\end{equation*}
Combining the two preceding displays together with Lemma~\ref{repeat refine lem}, one deduces that
\begin{equation*}
        \|f_{\vec{S}_j} |_{\W_j}\|_2^2 \lesssim_{\varepsilon} \Big(\frac{r_{j-1}}{r_j}\Big)^{-\frac{j-1}{2}}D_j^{\delta}R^{O(\varepsilon_{\circ})}   \big\|f_{\vec{S}_{j-1}}|_{\W_{j-1}}\big\|_2^2
\end{equation*}
and this inequality may be applied recursively to deduce \eqref{alg 2 2}.

To conclude the proof, simply define $f_{\vec{S}_{\ell}}^{\#} := f|_{\W_0}$, noting that the desired properties then immediately follow from the preceding discussion.\hfill $\Box$




\appendix

\section{Deriving the asymptotic for the linear exponents}\label{appendix}

Here the derivation of the $\lambda$ coefficient featured in Theorem \ref{asymptotic thm} is described. The first author thanks Keith M. Rogers for the following argument. Begin by noting that
\begin{equation}\label{bounds}
\frac{2i+1}{2(i+1)+1}\ge\frac{2i}{2i+1}\frac{2(i+1)}{2(i+1)+1}\ge \frac{i}{i+1},
\end{equation}
so that, by using the lower bound and telescoping, 
$$
\Big(\prod_{i=k}^{n-1}\frac{2i}{2i+1}\Big)^2=\frac{2k}{2k+1}\frac{2n+1}{2n}\prod_{i=k}^{n-1}\frac{2i}{2i+1}\frac{2(i+1)}{2(i+1)+1}\ge\frac{2n+1}{2k+1}\frac{k^2}{n^2}.
$$
Taking the square root and plugging this into the definition of $p_n(k)$, one obtains 
\begin{align*}
p_n(k)\le2+\frac{6}{2(n-1)+(k-1)(\frac{2n+1}{2k+1})^{1/2}\frac{k}{n}}.
\end{align*}
The analogous argument, using the upper bound from \eqref{bounds}, yields
\begin{align*}
p_n(k)\ge2+\frac{6}{2(n-1)+(k-1)(\frac{k}{n})^{1/2}}.
\end{align*}
Taking $k=\nu n + O(1)$ for some $0<\nu<1$, it follows that, asymptotically,
\begin{equation}\label{boundbel0}
p_n(k)=2+\frac{6}{2+\nu^{3/2}} n^{-1}+O(n^{-2}).
\end{equation}
On the other hand, for $k=\nu n + O(1)$, the constraint
\begin{equation*}
    p \geq 2+\frac{4}{2n-k}
\end{equation*}
coming from the Bourgain--Guth argument (c.f. \eqref{BG constraints}) can be rewritten as
\begin{equation}\label{boundbel}
p\ge 2+\frac{4}{2-\nu}n^{-1} +O(n^{-2}).
\end{equation}
Optimal choice of $\nu$ corresponds to the value at which linear coefficients in \eqref{boundbel0} and \eqref{boundbel} are equal. This occurs when $\nu^{1/2}$ solves  the cubic equation
\begin{equation*}
    2x^{3}+3x^2-2=0;
\end{equation*}
the derivation of this condition is presented in the appendix. Cardano's formula shows that the unique real root of this equation is given by the irrational number\footnote{One can immediately see that the root must be irrational by applying Eisenstein's criterion to the shifted polynomial $2(x+1)^3 + 3(x+1)^2 - 2$.}
\begin{equation*}
    \nu^{1/2}=\Big(\frac{3}{8} + \frac{1}{8^{1/2}}\Big)^{1/3}+\Big(\frac{3}{8} - \frac{1}{8^{1/2}}\Big)^{1/3}-\frac{1}{2} = 0.67765... .
\end{equation*}
Plugging this back into \eqref{BG constraints} yields  \eqref{extension} in the range
\begin{equation*}
    p>2+\lambda n^{-1}+O(n^{-2})
\end{equation*}
with $\lambda = \frac{4}{2-\nu}= 2.59607...$.




\bibliography{Reference}

\providecommand{\bysame}{\leavevmode\hbox to3em{\hrulefill}\thinspace}
\providecommand{\MR}{\relax\ifhmode\unskip\space\fi MR }
\providecommand{\MRhref}[2]{%
  \href{http://www.ams.org/mathscinet-getitem?mr=#1}{#2}
}
\providecommand{\href}[2]{#2}
\begin{thebibliography}{10}

\bibitem{Bennett2014}
Jonathan Bennett, \emph{Aspects of multilinear harmonic analysis related to
  transversality}, Harmonic analysis and partial differential equations,
  Contemp. Math., vol. 612, Amer. Math. Soc., Providence, RI, 2014, pp.~1--28.
  \MR{3204854}

\bibitem{Bourgain1991}
J.~Bourgain, \emph{Besicovitch type maximal operators and applications to
  {F}ourier analysis}, Geom. Funct. Anal. \textbf{1} (1991), no.~2, 147--187.
  \MR{1097257}

\bibitem{BG2011}
J.~Bourgain and L.~Guth, \emph{Bounds on oscillatory integral operators based
  on multilinear estimates}, Geom. Funct. Anal. \textbf{21} (2011), no.~6,
  1239--1295. \MR{2860188}

\bibitem{Carbery1992}
A.~Carbery, \emph{Restriction implies {B}ochner-{R}iesz for paraboloids}, Math.
  Proc. Cambridge Philos. Soc. \textbf{111} (1992), no.~3, 525--529.
  \MR{1151328}

\bibitem{Demeter}
C.~Demeter, \emph{On the restriction theorem for paraboloid in
  {$\mathbb{R}^4$}}, Preprint: arXiv:1701.03523.

\bibitem{Fefferman1970}
C.~Fefferman, \emph{Inequalities for strongly singular convolution operators},
  Acta Math. \textbf{124} (1970), 9--36. \MR{0257819}

\bibitem{Guth2016}
L.~Guth, \emph{A restriction estimate using polynomial partitioning}, J. Amer.
  Math. Soc. \textbf{29} (2016), no.~2, 371--413. \MR{3454378}

\bibitem{GK2015}
L.~Guth and N.H. Katz, \emph{On the {E}rd{\H{o}}s distinct distances problem in
  the plane}, Ann. of Math. (2) \textbf{181} (2015), no.~1, 155--190.
  \MR{3272924}

\bibitem{GZ}
L.~Guth and J.~Zahl, \emph{Polynomial {W}olff axioms and {K}akeya-type
  estimates in {$\mathbb{R}^4$}}, Preprint: arXiv:1701.07045.

\bibitem{Guth2018}
Larry Guth, \emph{Restriction estimates using polynomial partitioning {II}},
  Acta Math. \textbf{221} (2018), no.~1, 81--142. \MR{3877019}

\bibitem{GHI2019}
Larry Guth, Jonathan Hickman, and Marina Iliopoulou, \emph{Sharp estimates for
  oscillatory integral operators via polynomial partitioning}, Acta Math.
  \textbf{223} (2019), no.~2, 251--376. \MR{4047925}

\bibitem{HR2019}
Jonathan Hickman and Keith~M. Rogers, \emph{Improved {F}ourier restriction
  estimates in higher dimensions}, Camb. J. Math. \textbf{7} (2019), no.~3,
  219--282. \MR{4010062}

\bibitem{HRZ}
Jonathan Hickman, Keith~M. Rogers, and Ruixiang Zhang, \emph{{I}mproved bounds
  for the {K}akeya maximal conjecture in higher dimensions}, Preprint:
  arXiv:1908.05589.

\bibitem{KR2018}
Nets~Hawk Katz and Keith~M. Rogers, \emph{On the polynomial {W}olff axioms},
  Geom. Funct. Anal. \textbf{28} (2018), no.~6, 1706--1716. \MR{3881832}

\bibitem{Maple}
{Maplesoft, a division of Waterloo Maple Inc.}, \emph{Maple 17}.

\bibitem{Stein1979}
E.M. Stein, \emph{Some problems in harmonic analysis}, Harmonic analysis in
  {E}uclidean spaces ({P}roc. {S}ympos. {P}ure {M}ath., {W}illiams {C}oll.,
  {W}illiamstown, {M}ass., 1978), {P}art 1, Proc. Sympos. Pure Math., XXXV,
  Part, Amer. Math. Soc., Providence, R.I., 1979, pp.~3--20. \MR{545235}

\bibitem{Tao1999}
T.~Tao, \emph{The {B}ochner-{R}iesz conjecture implies the restriction
  conjecture}, Duke Math. J. \textbf{96} (1999), no.~2, 363--375. \MR{1666558}

\bibitem{Tao2003}
\bysame, \emph{A sharp bilinear restrictions estimate for paraboloids}, Geom.
  Funct. Anal. \textbf{13} (2003), no.~6, 1359--1384. \MR{2033842}

\bibitem{Tao2004}
Terence Tao, \emph{Some recent progress on the restriction conjecture}, Fourier
  analysis and convexity, Appl. Numer. Harmon. Anal., Birkh\"{a}user Boston,
  Boston, MA, 2004, pp.~217--243. \MR{2087245}

\bibitem{Tomas1975}
P.A. Tomas, \emph{A restriction theorem for the {F}ourier transform}, Bull.
  Amer. Math. Soc. \textbf{81} (1975), 477--478. \MR{0358216}

\bibitem{Wang}
H.~Wang, \emph{A restriction estimate in {$\mathbb{R}^3$} using brooms},
  Preprint: arXiv:1802.04312.

\bibitem{Wolff1999}
Thomas Wolff, \emph{Recent work connected with the {K}akeya problem}, Prospects
  in mathematics ({P}rinceton, {NJ}, 1996), Amer. Math. Soc., Providence, RI,
  1999, pp.~129--162. \MR{1660476}

\bibitem{Wongkew1993}
R.~Wongkew, \emph{Volumes of tubular neighbourhoods of real algebraic
  varieties}, Pacific J. Math. \textbf{159} (1993), no.~1, 177--184.
  \MR{1211391}

\bibitem{Zahl2018}
J.~Zahl, \emph{A discretized {S}everi-type theorem with applications to
  harmonic analysis}, Geom. Funct. Anal. \textbf{28} (2018), no.~4, 1131--1181.
  \MR{3820441}

\bibitem{Zahl}
Joshua Zahl, \emph{{N}ew {K}akeya estimates using {G}romov's algebraic lemma},
  Preprint: arXiv:1908.05314.

\end{thebibliography}
\bibliographystyle{amsplain}

\end{document}